\numberwithin{equation}{section}
\theoremstyle{plain}
\newtheorem{thm}{Theorem}[section]
\newaliascnt{cor}{thm}
\newaliascnt{prop}{thm}
\newaliascnt{lem}{thm}
\newtheorem{cor}[cor]{Corollary}
\newtheorem{lem}[lem]{Lemma}
\theoremstyle{definition}
\newaliascnt{defn}{thm}
\newaliascnt{asu}{thm}
\newtheorem{asu}[asu]{Assumption}
\theoremstyle{remark}
\newaliascnt{rem}{thm}
\newaliascnt{exa}{thm}
\newtheorem{rem}[rem]{Remark}
\newtheorem{exa}[exa]{Example}
\newenvironment{psmallmatrix}
{\left(\begin{smallmatrix}}
{\end{smallmatrix}\right)}
\newcommand{\RR}{\mathbb{R}}
\newcommand{\CC}{\mathbb{C}}
\newcommand{\BB}{\mathbb{B}}
\newcommand{\NN}{\mathbb{N}}
\newcommand{\eps}{\varepsilon}
\newcommand{\sL}{\mathcal{L}}
\newcommand{\sB}{\mathcal{B}}
\newcommand{\sQ}{\mathcal{Q}}
\newcommand{\sR}{\mathcal{R}}
\newcommand{\sRtn}{\mathcal{R}_{\tn}}
\newcommand{\sx}{{\scriptstyle\mathcal{X}}}
\newcommand{\mv}{\mathsf{v}}
\newcommand{\me}{\mathsf{e}}
\newcommand{\mei}{\me^i}
\newcommand{\mee}{\me^e}
\newcommand{\rL}{\mathrm{L}}
\newcommand{\rM}{\mathrm{M}}
\newcommand{\rC}{\mathrm{C}}
\newcommand{\rW}{\mathrm{W}}
\newcommand{\p}{{\raisebox{1.3pt}{{$\scriptscriptstyle\bullet$}}}}
\newcommand{\ps}{{\raisebox{0.3pt}{{$\scriptscriptstyle\bullet$}}}}
\newcommand{\RRp}{{\RR_+}}
\newcommand{\LpneCm}{\mathchoice{\rL^p\bigl([0,1],\CC^m\bigr)}{\rL^p([0,1],\CC^m)}{\rL^p([0,1],\CC^m)}{\rL^p([0,1],\CC^m)}}
\newcommand{\CneCm}{\mathchoice{{\rC\bigl([0,1],\CC^m\bigr)}}{{\rC([0,1],\CC^m)}}{{\rC([0,1],\CC^m)}}{{\rC([0,1],\CC^m)}}}
\newcommand{\WepneCm}{\mathchoice{\rW^{1,p}\bigl([0,1],\CC^m\bigr)}{\rW^{1,p}([0,1],\CC^m)}{\rW^{1,p}([0,1],\CC^m)}{\rW^{1,p}([0,1],\CC^m)}}
\newcommand{\LpRpCl}{\mathchoice{\rL^p\bigl(\RRp,\CC^{\ell}\bigr)}{\rL^p(\RRp,\CC^{\ell})}{\rL^p(\RRp,\CC^{\ell})}{\rL^p(\RRp,\CC^{\ell})}}
\newcommand{\CnRpCl}{\mathchoice{\rC_0\bigl(\RRp,\CC^{\ell}\bigr)}{\rC_0(\RRp,\CC^{\ell})}{\rC_0(\RRp,\CC^{\ell})}{\rC_0(\RRp,\CC^{\ell})}}
\newcommand{\WepRpCl}{\mathchoice{\rW^{1,p}\bigl(\RRp,\CC^{\ell}\bigr)}{\rW^{1,p}(\RRp,\CC^{\ell})}{\rW^{1,p}(\RRp,\CC^{\ell})}{\rW^{1,p}(\RRp,\CC^{\ell})}}
\newcommand{\Tt}{(T(t))_{t\ge0}}
\newcommand{\dt}{\,dt}
\newcommand{\dr}{\,dr}
\newcommand{\ds}{\,ds}
\newcommand{\dds}{\frac{d}{ds}}
\newcommand{\tdds}{\tfrac{d}{ds}}
\newcommand{\rank}{\operatorname{rank}}
\newcommand{\diag}{\operatorname{diag}}
\newcommand{\rg}{\operatorname{rg}}
\renewcommand{\r}{\right}
\renewcommand{\l}{\left}
\newcommand{\ft}{{\tilde f}}
\newcommand{\Cb}{{\bar C}}
\newcommand{\Gt}{{\tilde G}}
\newcommand{\Gb}{{\bar G}}
\newcommand{\Id}{Id}
\newcommand{\IId}{\diag(\Id,\Id)}
\newcommand{\sBt}{\sB_t}
\newcommand{\Bttn}{(\sB_t)_{t\in[0,\tn]}}
\newcommand{\tn}{t_0}
\newcommand{\tmax}{t_{\mathrm{max}}}
\newcommand{\dX}{{\partial X}}
\newcommand{\phib}{{\bar\varphi}}
\newcommand{\cb}{{\bar c}}
\newcommand{\vh}{{\hat v}}
\newcommand{\Phib}{{\bar\Phi}}
\newcommand{\Rt}{R_t}
\newcommand{\St}{S_t}
\renewcommand{\theta}{\vartheta}
\renewcommand{\u}{q}
\newcommand{\Xe}{X^e}
\newcommand{\Xin}{X^i}
\newcommand{\Qt}{Q_t}
\newcommand{\epsl}{\eps_\lambda}
\newcommand{\Ll}{L_\lambda}
\newcommand{\Phie}{\Phi^e}
\newcommand{\Phii}{\Phi^i}
\newcommand{\Phin}{\Phi_0}
\newcommand{\Phien}{\Phi^e_0}
\newcommand{\Phiin}{\Phi^i_0}
\newcommand{\qe}{q^e}
\newcommand{\qi}{q^i}
\newcommand{\ce}{\lambda^e}
\newcommand{\ci}{\lambda^i}
\newcommand{\cij}{\ci_j}
\newcommand{\cek}{\ce_k}
\newcommand{\phiek}{\phi_k}
\newcommand{\phiij}{\varphi_j}
\newcommand{\fe}{f^e}
\newcommand{\fin}{f^i}
\newcommand{\ue}{u^e}
\newcommand{\ui}{u^i}
\newcommand{\ueh}{\uh^e}
\newcommand{\uih}{\uh^i}
\newcommand{\Phibe}{\Phib^e}
\newcommand{\Phibi}{\Phib^i}
\newcommand{\Dme}{D_m^e}
\newcommand{\Dne}{D_0^e}
\newcommand{\Vne}{V_0^e}
\newcommand{\Vni}{V_0^i}
\newcommand{\Vei}{V_1^i}
\newcommand{\Te}{T^e}
\newcommand{\Ti}{T^i}
\newcommand{\uh}{{\hat u}}
\newcommand{\fh}{{\hat f}}
\newcommand{\Pep}{P^e_+}
\newcommand{\Pem}{P^e_-}
\newcommand{\Pip}{P^i_+}
\newcommand{\Pim}{P^i_-}
\newcommand{\dXe}{\partial\Xe}
\newcommand{\dXi}{\partial\Xin}
\newcommand{\Ae}{A^e}
\newcommand{\Ai}{A^i}
\newcommand{\Aem}{A_m^e}
\newcommand{\Aim}{A_m^i}
\newcommand{\Le}{L^e}
\newcommand{\Li}{L^i}
\newcommand{\Tet}{(T^e(t))_{t\ge0}}
\newcommand{\Tle}{T^e_{l}}
\newcommand{\Tlet}{(\Tle(t))_{t\ge0}}
\newcommand{\Tre}{T^e_r}
\newcommand{\Tret}{(\Tre(t))_{t\ge0}}
\newcommand{\Titt}{(\Ti(t))_{t\in\RR}}
\newcommand{\sBte}{\sB^e_t}
\newcommand{\sBti}{\sB^i_t}
\newcommand{\Jp}{{J_\phi}}
\newcommand{\Jpb}{{J_\phib}}
\newcommand{\Jpi}{{J_{\phi}^{-1}}}
\newcommand{\Jpbi}{{J_{\phib}^{-1}}}
\newcommand{\cbm}{{|\cb|}}
\newcommand{\n}{q}
\newcommand{\B}{B}
\title [Flows on Metric Graphs]
{Flows on Metric Graphs with General Boundary Conditions }
\author{Klaus-Jochen Engel}
\address{University of L'Aquila, Department of Information Engineering, Computer Science and Mathematics, Via Vetoio, Coppito, I-67100 L'Aquila (AQ), Italy. \tt{klaus.engel@univaq.it}
}
\author{Marjeta Kramar Fijav\v{z}}
\address{University of Ljubljana, Faculty of Civil and Geodetic Engineering, Jamova 2, SI-1000 Ljubljana, Slovenia /
Institute of Mathematics, Physics, and Mechanics,
Jadranska 19, SI-1000 Ljubljana, Slovenia.
\tt{marjeta.kramar@fgg.uni-lj.si}
}
\subjclass[2010]{47D06, 34G10, 35L04, 35R02}%
\keywords{First order differential operators, transport equation, $C_0$-semigroups, flows on networks, non-compact metric graphs}%
\begin{document}

\begin{abstract}
In this note we study the generation of $C_0$-semigroups by first order differential operators on $\LpRpCl\times\LpneCm$ with general boundary conditions. In many cases we are able to characterize the generation property in terms of the invertibility of a matrix associated to the boundary conditions. The abstract results are used to study well-posedness of transport equations on non-compact metric graphs.
\end{abstract}
\maketitle

\section{Introduction}

This is a companion paper of \cite{EKF:19} where we studied linear wave and diffusion equations on metric graphs subject to general boundary conditions. 
The aim of the present work is to apply the same approach used there to examine first order equations modeling flows on metric graphs. 
This leads to the abstract Cauchy problem
\begin{equation}\label{eq:acp}
%\tag{ACP}
\begin{cases}
\dot x(t)= G x(t),&t\ge0,\\
x(0)=x_0,
\end{cases}
\end{equation}
for a first-order differential operator $G\colon D(G)\subset X\to X$ on the Banach space 
\[X=\LpRpCl\times\LpneCm.\] 
Here, $\RR_+$ and the interval $[0,1]$ represent the unbounded, respectively normalized bounded edges of the graph while its structure is encoded in the boundary conditions appearing in the definition of the domain $D(G)$.

In the simplest case, all edges are bounded, hence we have $\ell=0$, i.e., $X=\LpneCm$ and 
\begin{equation}\label{eq:example-intro}
\begin{aligned}
G&=c(\p)\cdot\tdds,\\
D(G)&=\bigl\{f\in\WepneCm:V_0f(0)=V_1f(1)
\bigr\},
\end{aligned}
\end{equation}
where $c(s)=\diag(\lambda_1(s),\ldots,\lambda_m(s))$
for non-vanishing, Lipschitz continuous functions $\lambda_k(\p)$, $k=1,\ldots,m$, and suitable scalar ``boundary'' matrices $V_0,V_1\in\rM_m(\CC)$. 

It is well known, for the details we refer to  \cite[Sect.~II.6]{EN:00}, that the problem \eqref{eq:acp} is well-posed if and only if $G$ generates a strongly continuous semigroup on $X$. 

\smallskip
Our results now give sufficient conditions, which in many important cases are also necessary, in terms of the invertibility of a linear map, for a first-order differential operator $G$ with general boundary conditions to be a generator, hence for the well-posedness of the associated problem \eqref{eq:acp}. For example, by  \autoref{cor:gen-matrix-det} 
 the operator $G$ in \eqref{eq:example-intro} is a generator if and only if
\begin{equation}\label{eq:det=/0}
\det\bigl(V_1P_++V_0P_-)\ne0,
\end{equation}
where $P_\pm$ denote the spectral projections associated to the positive/negative eigenvalues of $c(\p)$. In particular, condition~\eqref{eq:det=/0} characterizes the well-posedness of the Cauchy problem \eqref{eq:acp} for $G$ given by \eqref{eq:example-intro}.

The semigroup approach to linear transport equations (or flows) on networks was initiated in \cite{KS:05} and further pursued by many authors, see e.g.~\cite{MS07,EKNS08, BDK:13,Rad08, BN:14, BKFR:17,BFN:16,BP19}.
In all these works  finite compact graphs (i.e., with finitely many edges that are all compact intervals) are considered, $c(\p)$ are constant and diagonal matrices, and the boundary conditions rely on the structure of the graph, representing conservation of the mass, so-called Kirchhoff conditions. In \cite{Dor08, DKS09, DFKNR:10} the same approach was applied to study flows on infinite but compact graphs.

The setting has been generalized in \cite{Klo:10,Eng:13} to so-called difference operators, that are operators of the form  \eqref{eq:example-intro} where $c(\p)$ was considered to be a constant, not necessarily diagonal, self-adjoint matrix.  
We note that our problem can also be formulated as a first order boundary system or a so-called port-Hamiltonian system. In this framework 
generation results are shown in Hilbert space setting by a suitable basis transformation and then carried over to $\rL^p$-spaces,  see for example \cite{ZwaLeGMasVil10,JMZ:15}.
These methods were also applied in  \cite{JK:19,JW:19} to first order equations on some classes of non-compact graphs.

\smallskip
In comparison to the above cited works our approach presents several novelties: it allows to 
\begin{itemize}
\item consider operators on $\LpRpCl\times\LpneCm$ with applications to \emph{non}-compact graphs,
\item treat very general (also non-diagonal) ``velocity matrices'' $c(\p,\p)$, cf.~\eqref{eq:rep-a(s)},
\item treat very general (also non-local) boundary conditions, cf.~\eqref{eq:def-G-general},
\item treat all cases of $p\in[1,+\infty)$ simultaneously without using interpolation arguments,
\item obtain necessary conditions for the generator property, hence the well-posedness, for important classes of boundary conditions.
\end{itemize}
Our reasoning is based on similarity transformations and a  perturbation result for boundary perturbations of domains of generators from \cite[Sect.~4.3]{ABE:13}. %\autoref{thm:pert-bc}. 

\smallskip
This paper is organized as follows. In \autoref{sec:Lp-gen}, after introducing our general setup in \autoref{sec:intro}, we state and prove in \autoref{sec:m-gen-res} the main generation result, \autoref{thm:gen-Lp}. 
In \autoref{sec:necessary} we supplement this result in \autoref{thm:NC-1} by a necessary condition for $G$ to be a generator.
If the boundary operators defining the domain $D(G)$ are, modulo bounded perturbations, given by point evaluations, this gives simple generation conditions which are presented in \autoref{subcsec:reformulation}.  In the compact case $\ell =0$ this reduces in \autoref{cor:gen-matrix-det} to a very simple necessary and sufficient determinant condition like \eqref{eq:det=/0}. 
In \autoref{sec:networks} we then show how our results apply to transport processes on metric graphs. In particular, we reproduce and generalize many existing results which demonstrates the versatility of our approach. 
Finally, in the appendix we recall our approach to boundary perturbations  from  \cite{ABE:13,EKF:19} and present in \autoref{thm:NC-gen} a new necessary condition for a perturbed generator to be again a generator.

\section{Generation of $C_0$-Semigroups}\label{sec:Lp-gen}

In this section we introduce our general setup, present the generation results, give a necessary condition for generation and then give some reformulations of the boundary conditions.

\subsection{The Setup}\label{sec:intro}

Let $\RRp:=[0,+\infty)$. Throughout this section we make the following 

\begin{asu}\label{asu:s-asu-Lp} 
Consider for some fixed $p\ge1$ and $\ell,m\in\NN_0$ satisfying $\ell + m>0$
\begin{itemize}
\item the space $\Xe:=\LpRpCl$ of functions on the $\ell$ \emph{``external edges''},
\item the space $\Xin:=\LpneCm$ of functions on the $m$ \emph{``internal edges''},
\item the \emph{``state space''} $X:=\Xe\times\Xin$ of functions on all $\ell + m$ edges,
\item a \emph{``boundary space''} $\dX\subseteq\CC^{\ell + m }$ (which will be determined later, cf.~\eqref{eq:def-dX}),
\item  a \emph{``boundary operator''} $\Phi=(\Phie,\Phii)\in\sL(\CnRpCl\times\CneCm,\dX)$ (which determines the boundary conditions),
\item  a  \emph{``velocity matrix''} $c(\p,\p)\in\rL^\infty(\RRp,\rM_{\ell}(\CC))\times\rL^\infty([0,1],\rM_m(\CC))$ (containing the coefficients of the associated differential equation) of the form
\begin{equation}\label{eq:rep-a(s)}
\begin{aligned}
c(\p,\p)&:=
\begin{psmallmatrix}
\qe(\ps)&0\\0&\qi(\ps)
\end{psmallmatrix}
\cdot
\begin{psmallmatrix}
\ce(\ps)&0\\0&\ci(\ps)
\end{psmallmatrix}
\cdot
\begin{psmallmatrix}
\qe(\ps)&0\\0&\qi(\ps)
\end{psmallmatrix}^{-1}\\
&=: q(\p,\p)\cdot\lambda(\p,\p)\cdot q(\p,\p)^{-1},
\end{aligned}
\end{equation}
where the diagonal entries of $\lambda(\p,\p)$ are matrix-valued functions given by
\begin{alignat*}{3}
&\ce(r)=\diag\bigl(\cek(r)\bigr)_{k=1}^{\ell} \in\rM_{\ell}(\RR),&\quad&r\in\RRp,\\
&\ci(s)=\diag\bigl(\cij(s)\bigr)_{j=1}^m\in\rM_m(\RR),&\quad&s\in[0,1]
\end{alignat*}
and have strictly constant sign, i.e., there exists $\eps>0$ such that
\begin{alignat}{3}\label{eq:aek-beschr}
&\eps\le\cek(0)\cdot\cek(r)&\quad&\text{for all $k=1,\ldots,\ell$, $r\in\RRp$,}\\
&\eps\le\cij(0)\cdot\cij(s)&\quad&\text{for all $j=1,\ldots,m$, $s\in[0,1]$,}\notag
\end{alignat}
and the matrix-valued function $\u(\p,\p)$ is Lipschitz continuous and satisfies 
\begin{equation*}\label{eq:qq-1}
\u(\p,\p),\u(\p,\p)^{-1}\in\rL^\infty\bigl(\RRp,\rM_{\ell}(\CC)\bigr)\times\rL^\infty\bigl([0,1],\rM_m(\CC)\bigr).
\end{equation*}

\end{itemize}
\end{asu}

Note that these hypotheses imply that the mapping  $\lambda(\p,\p)$ is invertible and $\lambda(\p,\p),\lambda(\p,\p)^{-1}$ are both uniformly bounded.

\begin{rem}
We mention that in general even a smooth positive definite valued {$c(\p, \p)$} 
cannot be represented as in \eqref{eq:rep-a(s)}, cf. \cite[Rem.~2.9.(vi)]{EKF:19}.
However, if {$c(\p,\p)$} is analytic or if {$c(r,s)$} has $m+\ell$ distinct eigenvalues for all {$r\in\RRp, s\in[0,1]$} then it can always be written as in \eqref{eq:rep-a(s)}, see \cite[Chapt.~2]{Kat:80}. For the relevant assumptions in the Hilbert space setting we refer to \cite[Lem.~A.1]{KMN:21}.  
\end{rem}

In the sequel we equip all subspaces of $\CC^k$ with  the maximum norm $\|\p\|_\infty$, i.e., we define 
\[
\bigl\|(x_1,\ldots,x_k)^\top\bigr\|_{\CC^k}:=\max\bigl\{|x_1|,\ldots,|x_k|\bigr\}.
\]
Then, using that $\WepRpCl\subset\CnRpCl$ and $\WepneCm\subset\CneCm$  (see \cite[Sect.8.2]{Bre:11}), we define on $X=\LpRpCl\times\LpneCm$ the operator%
\begin{equation}\label{eq:def-G-general}
\begin{aligned}
G&:=c(\p,\p)\cdot\tdds,\\
D(G)&:=\bigl\{f\in\WepRpCl\times\WepneCm:\Phi f=0\bigr\}=\ker(\Phi).
\end{aligned}
\end{equation}
Our aim is to give conditions on the boundary operator $\Phi$ implying that $G$ generates a $C_0$-semigroup on $X$. As we will see in our main result, \autoref{thm:gen-Lp}, this can be achieved through the  invertibility of the operator $\sRtn$ defined in \eqref{eq:def-sR}.

In order to choose in \autoref{asu:s-asu-Lp} a reasonable boundary space $\dX$ note that for $\pm\dds$ to be a generator on $\rL^p(\RRp)$ we need to impose no boundary condition in case of ``$+$'' while for the sign ``$-$'' a boundary condition in $r=0$ is needed. On the other hand, for $\pm\dds$ to be a generator on $\rL^p[0,1]$ in both cases exactly one boundary condition is needed, in case of ``$+$'' in $s=1$, and for the sign ``$-$'' in $s=0$. 
Denote the (constant and diagonal) spectral projections corresponding to the positive/negative eigenvalues of $\ce(r)\in\rM_{\ell}(\RR)$ and $\ci(s)\in\rM_m(\RR)$, $r\in\RRp$, $s\in[0,1]$, by
\begin{equation}\label{eq:def-P+-Phib}
\Pep,\Pem\in\rM_{\ell}(\CC),\quad 
\Pip,\Pim\in\rM_m(\CC),
\end{equation}
respectively.
Then, by the previous considerations, for $G=c(\p,\p)\cdot\dds$ in \eqref{eq:def-G-general} to be a generator it is reasonable to impose $\rank(\Pem)+m$ boundary conditions. This motivates the choice of
\begin{equation}\label{eq:def-dX}
\dX:=\dXe\times\dXi:=\rg(\Pem)\times\CC^m\subseteq\CC^{\ell + m }
\end{equation}
as boundary space.

\smallbreak
In order to state our main result rigorously we need some more notation. 
For $1\le k\le \ell$ we define
\begin{equation}\label{eq:def-phik}
\phiek(r):=\int_0^r\frac{ds}{|\ce_k(s)|},\ r\in\RRp.
\end{equation}
It follows from \eqref{eq:aek-beschr} that all functions $\phiek:\RRp\to\RRp$ are Lipschitz continuous, surjective, and strictly increasing, hence invertible with Lipschitz continuous inverses $\phi_k^{-1}:\RRp\to\RRp$.
Next, for $1\le j\le m$ we similarly define
\begin{alignat}{5}\label{eq:def-varphi}
&\phiij(s):=\int_0^s\frac{dr}{\cij(r)},
\quad\cb_j:=\frac1{\phiij(1)},
&\quad&\phib_j(s):=\cb_j\cdot\phiij(s)
,\ s\in[0,1].
\end{alignat}
% % % % % %
Also, all functions $\phib_j:[0,1]\to[0,1]$ are Lipschitz continuous, surjective, and strictly monotone, hence invertible with Lipschitz continuous inverses $\phib_j^{-1}:[0,1]\to[0,1]$.
Now, we put 
\begin{alignat*}{3}
&\phi:=(\phi_1,\ldots,\phi_{\ell})^\top:\RRp\to\RR_+^{\ell} ,&\qquad &\phi^{-1}:=(\phi_1^{-1},\ldots,\phi_{\ell}^{-1})^\top:\RRp\to\RR_+^{\ell} ,\\
&\phib:=(\phib_1,\ldots,\phib_m)^\top:[0,1]\to[0,1]^m, &&\phib^{-1}:=(\phib_1^{-1},\ldots,\phib_m^{-1})^\top:[0,1]\to[0,1]^m,\\
&\cb:=\bigl(\cb_1,\ldots,\cb_m\bigr)\in\RR^m, &&\cbm:=\bigl(|\cb_1|,\ldots,|\cb_m|\bigr)\in\RR_+^m.
\end{alignat*}
Finally, for functions $h=(h_1,\ldots,h_n)^\top:I\subseteq\RR\to\CC^n$, $\nu=(\nu_1,\ldots,\nu_n):I\to I^n$, a vector $x=(x_1,\ldots,x_n)^\top\in\RR^n$ and scalars $r,s\in\RR$ we set
\begin{align*}
(h\circ\nu)(r)&:=\bigl(h_1(\nu_1(r)),\ldots,h_n(\nu_n(r))\bigr)^\top,\\
h(r+x)&:=\bigl(h_1(r+x_1),\ldots,h_n(r+x_n)\bigr)^\top,\\
h\bigl(r+\tfrac{s}{x}\bigr)&:=\bigl(h_1\bigl(r+\tfrac{s}{x_1}\bigr),\ldots,h_n\bigl(r+\tfrac{s}{x_n}\bigr)\bigr)^\top.
\end{align*}

Using this notation we define the transformations
\begin{equation}\label{eq:def-Q_phi}
\begin{aligned}
&\Jp\in\sL(\Xe),&\quad&\Jp\fe:=\fe\circ\phi&\quad&\text{for }\fe\in\Xe,\\
&\Jpb\in\sL(\Xin), &&\Jpb\fin:=\fin\circ\phib&&\text{for }\fin\in\Xin.
\end{aligned}
\end{equation}
Then $\Jp$ and $\Jpb$ are invertible with inverses $\Jpi=J_{\phi^{-1}}\in\sL(\Xe)$ and $\Jpbi=J_{\phib^{-1}}\in\sL(\Xin)$.

% % % % % %

Using these facts we define for fixed $\tn>0$ and $t\in[0,\tn]$ the bounded linear operators $\Qt\in\sL(\rL^p([0,\tn],\dXe),\Xe)$ and $\Rt,\St\in\sL(\rL^p([0,\tn],\dXi),\Xin)$ by
\begin{equation}\label{eq:def-Rt-Sr}
\Qt\ue:=\ueh(t-\p\bigr),\quad
\Rt\ui:=\uih\bigl(t-\tfrac\ps\cbm\bigr),
\quad
\St\ui:=\uih\bigl(t-\tfrac{1-\ps}\cbm\bigr),
\end{equation}
where $\uh$ denotes the extension of a function $u$ defined on $I\subset\RR$ to $\RR$ by the value $0$. 
In order to state the main result we need to define an operator $\sRtn$ as follows where
\begin{align*}
\Phib&:=(\Phibe,\Phibi):=\bigl(\Phie\cdot\qe(\p)\cdot\Jp,\Phii\cdot\qi(\p)\cdot\Jpb\bigr)\\
&\in\sL\bigl(\CnRpCl\times\CneCm,\dX\bigr).
\end{align*}

\begin{lem}\label{lem:bdd-ext}
Let $0<\tn\le\frac1{\|\cb\|_\infty}$. Then the operator $\sRtn:\rW_0^{1,p}([0,\tn],\dX)\subset\rL^p([0,\tn],\dX)\to\rL^p([0,\tn],\dX)$ given by
\begin{equation}\label{eq:def-sR}
\begin{aligned}
\Bigl(\sRtn\tbinom\ue\ui\Bigr)(t):&=
\bigl(\Phibe,\Phibi\bigr)\cdot
\begin{pmatrix}
\Qt&0\\
0&\Pip\St -\Pim\Rt
\end{pmatrix}
\tbinom\ue\ui
\\
&=\Bigl(\Phibe\cdot\Qt\ue,\Phibi\bigl(\Pip\cdot\St -\Pim\cdot\Rt\bigr)\ui\Bigr)
\end{aligned}
\end{equation}
for $t\in[0,\tn]$ and $(\ue,\ui)^\top\in\rW^{1,p}_0([0,\tn],\dXe)\times\rW^{1,p}_0([0,\tn],\dXi)$
is well-defined and has a unique bounded extension denoted again by 
$\sRtn\in\sL(\rL^p([0,\tn],\dX))$.
\end{lem}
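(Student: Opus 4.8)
The plan is to exploit the block-diagonal structure in \eqref{eq:def-sR}, treating the external and internal parts separately and reducing each to a superposition of translations, on which the $\rL^p$-bound becomes transparent. Since $\Phie\in\sL(\CnRpCl,\dXe)$ and $\Phii\in\sL(\CneCm,\dXi)$ act on spaces of continuous functions, the Riesz representation theorem provides matrix-valued measures $\mu^e$ on $\RRp$ and $\mu^i$ on $[0,1]$, of finite total variation equal to $\|\Phie\|$ and $\|\Phii\|$, with $\Phie g=\int_{\RRp}d\mu^e\,g$ and $\Phii h=\int_{[0,1]}d\mu^i\,h$. Inserting $\Phibe=\Phie\,\qe(\p)\,\Jp$, $\Phibi=\Phii\,\qi(\p)\,\Jpb$ together with the definitions of $\Qt,\Rt,\St$ from \eqref{eq:def-Rt-Sr}, and recalling that $\Jp$, $\Jpb$ compose with $\phi$, $\phib$, one obtains the representations
\begin{align*}
\bigl(\sRtn\tbinom\ue\ui\bigr)^e(t)&=\int_{\RRp}d\mu^e(\rho)\,\qe(\rho)\,\ueh\bigl(t-\phi(\rho)\bigr),\\
\bigl(\sRtn\tbinom\ue\ui\bigr)^i(t)&=\int_{[0,1]}d\mu^i(\sigma)\,\qi(\sigma)\Bigl(\Pip\,\uih\bigl(t-\tfrac{1-\phib(\sigma)}{\cbm}\bigr)-\Pim\,\uih\bigl(t-\tfrac{\phib(\sigma)}{\cbm}\bigr)\Bigr),
\end{align*}
where $\ueh,\uih$ denote the zero extensions of $\ue,\ui$ to $\RR$.

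I would first establish well-definedness on $\rW^{1,p}_0$. For $\tbinom\ue\ui\in\rW^{1,p}_0([0,\tn],\dX)$ the Sobolev embedding $\rW^{1,p}\hookrightarrow\rC$ shows that $\ue,\ui$ are continuous and vanish at the endpoints of $[0,\tn]$, so their zero extensions $\ueh,\uih$ are uniformly continuous on $\RR$. Hence in each of the integrals above the integrand depends continuously on $t$, uniformly in the integration variable, and the integral therefore defines a function in $\rC([0,\tn],\dX)\subset\rL^p([0,\tn],\dX)$; thus $\sRtn\tbinom\ue\ui$ is well-defined.

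The decisive step is the $\rL^p$-estimate. Applying Minkowski's integral inequality to the two representations, and using that for every fixed shift $a\in\RR$ translation is an isometry of $\rL^p(\RR)$, so that $\|\ueh(\p-a)\|_{\rL^p([0,\tn])}\le\|\ueh\|_{\rL^p(\RR)}=\|\ue\|_{\rL^p([0,\tn])}$ and likewise for $\uih$, together with the uniform bounds on $\qe,\qi$ and the boundedness of the projections $\Pip,\Pim$, yields
\[
\bigl\|\sRtn\tbinom\ue\ui\bigr\|_{\rL^p([0,\tn],\dX)}\le C\,\bigl\|\tbinom\ue\ui\bigr\|_{\rL^p([0,\tn],\dX)}
\]
with $C$ depending only on $\|\Phie\|,\|\Phii\|,\|\qe\|_\infty,\|\qi\|_\infty$ and $\|\Pip\|,\|\Pim\|$. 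As $\rW^{1,p}_0([0,\tn],\dX)$ is dense in $\rL^p([0,\tn],\dX)$, this estimate furnishes the unique bounded extension $\sRtn\in\sL(\rL^p([0,\tn],\dX))$.

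The main obstacle is that $\Phib$ is bounded only on the spaces of continuous functions and \emph{not} on $X$ itself, so one cannot simply compose it with the $X$-valued curves $t\mapsto\Qt\ue$, $t\mapsto\Rt\ui$, $t\mapsto\St\ui$. The point is that the gain of an $\rL^p$-estimate comes not from any single time $t$ but from integrating over $t$: the translation structure of $\Qt,\Rt,\St$ converts the spatial variable on which $\Phib$ acts into the time variable, and Minkowski's inequality then trades the (uncontrolled) spatial regularity for integrability in $t$. Verifying the two displayed integral representations—in particular keeping track of the composition with $\phi,\phib$ inside $\Jp,\Jpb$ and of the signs attached to $\Pip,\Pim$—is the only genuinely computational part.
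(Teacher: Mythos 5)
Your proposal is correct and follows essentially the same route as the paper: there, too, the boundary operators are represented as Stieltjes integrals against finite operator-valued measures of bounded variation, and the $\rL^p$-bound is extracted from the translation structure of $\Qt,\Rt,\St$, the paper merely first factoring $\Pip\St-\Pim\Rt=(\Pip\cdot\psi-\Pim)\cdot\Rt$ with $\psi(s)=1-s$ so that both blocks reduce to a single template operator $U_{\tn}^\Psi$, and then delegating the Minkowski/H\"older-type estimate you carry out explicitly to the cited \cite[Lem.~2.2]{EKF:19}. The only substantive difference is that the paper's estimate keeps the constant localized as $|\eta|(I_{\tn})$ rather than using the full total variation; this is irrelevant for the present lemma but is exploited later in the proof of \autoref{lem:inv-R0}.
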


\begin{proof} 
Define $\psi(s):=1-s$ for $s\in[0,1]$. Then $\St=\psi\cdot\Rt$, hence 
\begin{equation}\label{eq:Bti-psi}
\Pip\cdot\St -\Pim\cdot\Rt=(\Pip\cdot\psi -\Pim)\cdot\Rt
\end{equation}
Observe, that $\Phibe\in\sL\bigl(\CnRpCl,\dX\bigr)$ and $\Phibi(\Pip\cdot\psi -\Pim)\in\sL\l(\CneCm,\dX\r)$
are well-defined. Hence, it suffices to show that the operator
\begin{equation}\label{eq:def-Utn}
\begin{aligned}
&U_{\tn}^\Psi:\rW^{1,p}_0\bigl([0,\tn],\CC^k\bigr)\subset\rL^p\bigl([0,\tn],\CC^k\bigr)\to\rL^p\bigl([0,\tn],\dX\bigr),\\
&(U_{\tn}^\Psi u)(t):=\Psi\,\uh\bigl(t-\theta(\p)\bigr),\ t\in[0,\tn]
\end{aligned}
\end{equation}
is well-defined and has a bounded extension in $\sL(\rL^{p}([0,\tn],\CC^k),\rL^{p}([0,\tn],\dX))$ where for the
\begin{itemize}
\item external part $k=\ell$, $\Psi\in\sL(\rC_0(I,\CC^k),\dX)$ and $\theta(s):=s$, $s\in I:=\RRp$, 
\item internal part $k=m$, $\Psi\in\sL(\rC(I,\CC^k),\dX)$ and $\theta(s):=\tfrac s{\,\cbm\,}$, $s\in I:=[0,1]$.
\end{itemize}

Let
\begin{equation}\label{eq:def-I_tn}
I_{\tn}:=
\begin{cases}
[0,\tn],&\text{in the external case},\\
\bigl[0,\|\cb\|_\infty\cdot\tn\bigr],&\text{in the internal case.}
\end{cases}
\end{equation}
Then, by repeating the same arguments as in the proof of \cite[Lem.~2.2]{EKF:19}, it follows that $U_{\tn}^\Psi$ is indeed well-defined,  bounded,  and satisfies for $u\in\rW_0^{1,p}([0,\tn],\CC^k)$  the estimate
\begin{equation}
\|U_{\tn}^\Psi u\|_p^p \le\bigl(|\eta|(I_{\tn})\bigr)^{p}\cdot\|u\|^p_{p}.\label{eq:est-Utn}
\end{equation}
Here, $|\eta|:I\to\RRp$ denotes the positive Borel measure defined by the total variation of 
$\eta$ and $\eta:I\to\sL(\CC^k,\dX)$ is the function of bounded variation such that $\Psi$ is given by the Riemann--Stieltjes integral
\begin{equation}\label{eq:Phi-generic-e}
\Psi h=\int_I d\eta(s)\, h(s)
\quad\text{for all}\quad
h\in\rW^{1,p}\bigl(I,\CC^k\bigr).
\qedhere
\end{equation}
\end{proof}

\subsection{The Main Generation Results}\label{sec:m-gen-res}

We are now ready to state our main result on the generation of a $C_0$-semigroup by operators given by \eqref{eq:def-G-general}. 

\begin{thm}\label{thm:gen-Lp}
Let \autoref{asu:s-asu-Lp}  be satisfied.
If there exists $\tn>0$ such that the operator $\sRtn\in\sL(\rL^p([0,\tn],\dX))$ given by \eqref{eq:def-sR} is invertible,
then the operator $G$ defined in \eqref{eq:def-G-general} generates a $C_0$-semigroup on $X=\LpRpCl\times\LpneCm$.
\end{thm}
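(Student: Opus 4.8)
The strategy is to reduce $G$ by two bounded similarity transformations to a \emph{decoupled constant-speed} transport operator, and then to treat the condition $\Phi f=0$ as a boundary perturbation to which the abstract result of \cite[Sect.~4.3]{ABE:13}, recalled in the appendix, applies; the operator governing that perturbation will turn out to be exactly $\sRtn$. Throughout I use that the generator property is preserved both under conjugation by a bounded invertible operator and under bounded additive perturbations.

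First I would diagonalize the velocity matrix. Since $c(\p,\p)=q(\p,\p)\,\lambda(\p,\p)\,q(\p,\p)^{-1}$ with $q(\p,\p),q(\p,\p)^{-1}\in\rL^\infty$ and $q(\p,\p)$ Lipschitz continuous, multiplication by $q(\p,\p)$ is a bounded invertible operator on $X$ mapping $\WepRpCl\times\WepneCm$ onto itself. Conjugating $G$ by it produces $\lambda(\p,\p)\cdot\tdds$ plus multiplication by a bounded matrix coming from the derivative of $q(\p,\p)$ (bounded by Lipschitz continuity), so it suffices to treat the diagonal operator $\lambda(\p,\p)\cdot\tdds$. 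Next I would straighten the speeds using $\Jp,\Jpb$ from \eqref{eq:def-Q_phi}: a direct computation from \eqref{eq:def-phik} and \eqref{eq:def-varphi} shows that $\Jp^{-1}\bigl(\ce(\p)\cdot\tdds\bigr)\Jp$ acts componentwise as $\pm\tdds$, with the sign of $\cek$, and that $\Jpb^{-1}\bigl(\ci(\p)\cdot\tdds\bigr)\Jpb$ acts as $\cb_j\cdot\tdds$ on the $j$-th internal component. After this further bounded invertible similarity (and discarding one more bounded perturbation) $G$ becomes a decoupled constant-speed transport operator $G_0$, while the boundary conditions turn into $\Phib f=0$ with $\Phib=(\Phibe,\Phibi)$ as defined before \autoref{lem:bdd-ext}. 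Hence it remains to show that $G_0$ with domain $\ker(\Phib)$ generates a $C_0$-semigroup.

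The operator $G_0$ equipped with the minimal ``outflow'' conditions (no condition on the positive-speed external components, homogeneous conditions at the inflow ends of the remaining edges) generates an explicit semigroup built from the translation and nilpotent shift semigroups on $\RRp$ and $[0,1]$; this is the reference generator of the perturbation framework of the appendix, and $\Phib f=0$ is a boundary perturbation of it. For prescribed boundary input $(\ue,\ui)^\top\in\rL^p([0,\tn],\dX)$ the inflow part of the corresponding mild solution is precisely the state produced by $\Qt,\Rt,\St$ in \eqref{eq:def-Rt-Sr}, with $\Pip,\Pim$ selecting the components fed from $s=1$ and $s=0$. Imposing the homogeneous boundary condition on the full state then forces a linear equation for $(\ue,\ui)^\top$ on $[0,\tn]$ whose governing operator, obtained by applying $\Phib$ to this inflow state, is exactly $\sRtn$ from \eqref{eq:def-sR}; by \autoref{lem:bdd-ext} this is a well-defined bounded operator on $\rL^p([0,\tn],\dX)$ for $0<\tn\le\frac1{\|\cb\|_\infty}$. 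If $\sRtn$ is invertible the boundary input is uniquely determined as a bounded function of the initial data, the closed-loop solution is well defined on $[0,\tn]$, and \cite[Sect.~4.3]{ABE:13} yields that $G_0|_{\ker(\Phib)}$ generates a $C_0$-semigroup on $X$. Undoing the two similarities and the bounded perturbations transfers the generation property back to $G$.

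The main obstacle is the analytic core hidden in the boundary-perturbation step: one must verify that the feedback $\Phib\circ(\Qt,\Rt,\St)$ is an admissible bounded operator on $\rL^p$, so that the abstract hypotheses genuinely apply. This is exactly the content of \autoref{lem:bdd-ext}, established through the Riemann--Stieltjes representation \eqref{eq:Phi-generic-e} of $\Phib$ and the total-variation estimate \eqref{eq:est-Utn}. The restriction $\tn\le\frac1{\|\cb\|_\infty}$ is what makes the feedback genuinely Volterra in $t$, so that, although it is $\sRtn$ itself (rather than $I-\sRtn$) that must be inverted, the construction closes up on $[0,\tn]$ and extends to all $t\ge0$ by the cocycle property.
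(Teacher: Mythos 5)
Your overall strategy coincides with the paper's: two similarity transformations (diagonalization by $q(\p,\p)$ and speed-straightening by $\Jp,\Jpb$), a bounded additive perturbation coming from the derivative of $q(\p,\p)$, and then the boundary-perturbation machinery of \cite[Sect.~4.3]{ABE:13} with the input-output map identified as $\sRtn$. There is, however, one genuine omission. The abstract result you invoke (\autoref{thm:pert-bc-v3}) has \emph{four} hypotheses, and you address only three. Hypothesis~(ii) requires the $p$-admissibility of $\Phib$ as an \emph{observation} operator for the unperturbed semigroup $\Tt$, i.e.\ an estimate $\int_0^{\tn}\|\Phib\,T(t)f\|_{\dX}^p\dt\le M\,\|f\|_X^p$ for $f\in D(A)$. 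This is \emph{not} ``exactly the content of \autoref{lem:bdd-ext}'': that lemma bounds the input-output map $\Phib\sB_{\p}$ acting on boundary inputs (hypothesis~(iii)), whereas (ii) concerns $\Phib$ applied to orbits of the reference semigroup starting from initial data in $X$. The paper verifies (ii) by two separate estimates (one for the external shift semigroup, one for the internal rotation group), again using the Riemann--Stieltjes representation \eqref{eq:Phi-generic-e}; this step is not implied by \autoref{lem:bdd-ext} and must be supplied.

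A smaller discrepancy: you take as internal reference generator the nilpotent shifts with homogeneous inflow conditions, i.e.\ $\Li=\Pip\delta_1+\Pim\delta_0$, whereas the paper uses the rotation group $\Ai=\Cb\cdot\tdds$ with periodic conditions and $\Li=\delta_1-\delta_0$. Your choice is workable, but then the solution of the internal boundary control system with input $v$ is $(\Pip\St+\Pim\Rt)v$ rather than $(\Pip\St-\Pim\Rt)v$, so the resulting input-output map is $\sRtn$ composed on the right with the invertible constant matrix $\diag(\Id,\Pip-\Pim)$; it is invertible if and only if $\sRtn$ is, but it is not ``exactly $\sRtn$'' as you claim. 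With these two repairs the argument matches the paper's proof.
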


\begin{rem}\label{rem-sRt-tilde}
It is slightly inconvenient that $\sRtn$ is defined on functions assuming values in the subspace $\dX=\rg(\Pem)\times\CC^m$ of the (simpler) space $\CC^{\ell} \times\CC^m$. However, one can easily extend $\sRtn$ to an operator $\tilde\sR_{\tn}\in\sL(\rL^p([0,\tn],\CC^{\ell} \times\CC^m))$ without changing its invertibility. In fact, if we put
\begin{equation*} 
\tilde\sR_{\tn}:=
\begin{psmallmatrix}
\Pep&0\\0&0
\end{psmallmatrix}
+\sRtn\cdot
\begin{psmallmatrix}
\Pem&0\\0&\Id_{m}
\end{psmallmatrix}
\in\sL\bigl(\rL^p\bigl([0,\tn],\CC^{\ell} \times\CC^m\bigr)\bigr),
\end{equation*}
then $\sRtn$ is invertible if and only if $\tilde\sR_{\tn}$ is invertible.
\end{rem}

The proof of \autoref{thm:gen-Lp} is split into several steps. First, under the hypothesis $c(\p,\p)$ to be diagonal, that is $q(\p,\p)\equiv\IId$, we normalize the operator $G$ to obtain a simpler but similar operator $\Gb$ with constant velocities. The case of general $q(\p,\p)$ as in \eqref{eq:rep-a(s)} then follows by similarity and bounded perturbation.

\smallbreak
Recall that 
$\Jp\in\sL(\Xe)$, $\Jpb\in\sL(\Xin)$ are given by \eqref{eq:def-Q_phi}. Moreover, let 
\begin{equation}\label{eq:def-Cb}
\Cb:=\diag(\cb_1,\ldots,\cb_m)\in\rM_m(\RR)
\end{equation} 
where the $\cb_i$ are defined in \eqref{eq:def-varphi}.

\begin{lem}\label{lem:Gb}
Let $q(\p,\p)=\IId$. Then the operator $G$ given in \eqref{eq:def-G-general} on $X=\Xe\times\Xin$ is similar to $\Gb:D(\Gb)\subset X\to X$ where
\begin{equation}\label{eq:def-sGr}
\Gb:=
\diag\Bigl(\bigl(\Pep-\Pem\bigr)\cdot\tdds,\Cb\cdot\tdds\Bigr),
\quad
D(\Gb):=\ker(\Phib)
\end{equation}
and
$
\Phib:=(\Phie\cdot \Jp,\Phii\cdot \Jpb)
\in\sL(\CnRpCl\times\CneCm,\dX)
$.

\end{lem}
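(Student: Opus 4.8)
The plan is to exhibit an explicit bounded invertible \emph{similarity operator} intertwining $G$ and $\Gb$. Since $q(\p,\p)=\IId$, the velocity matrix collapses to $c(\p,\p)=\lambda(\p,\p)=\diag\bigl(\ce(\p),\ci(\p)\bigr)$, which is diagonal; hence $G$ acts componentwise, on the external block as $f^e\mapsto\ce(\p)\cdot\tdds f^e$ and on the internal block as $f^i\mapsto\ci(\p)\cdot\tdds f^i$, the two blocks being coupled only through the boundary operator $\Phi=(\Phie,\Phii)$. Accordingly I take $J:=\diag(\Jp,\Jpb)\in\sL(X)$, which is invertible with $J^{-1}=\diag(\Jpi,\Jpbi)$ by the remark following \eqref{eq:def-Q_phi}, and I claim $\Gb=J^{-1}GJ$.

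For the action, I would verify the intertwining on each component via the chain rule. On the external block, for $g:=\Jp f^e$ one has $g_k(r)=f^e_k(\phiek(r))$ with $\phiek'(r)=1/|\cek(r)|$ by \eqref{eq:def-phik}; since $\cek$ has constant sign $\sigma_k$ by \eqref{eq:aek-beschr}, it follows that $\cek(r)\cdot g_k'(r)=\cek(r)\,(f^e_k)'(\phiek(r))/|\cek(r)|=\sigma_k\,(f^e_k)'(\phiek(r))$, and applying $\Jpi$ (i.e.\ composing with $\phiek^{-1}$) yields $\sigma_k\,(f^e_k)'$. As $\cek$ is non-vanishing, $\diag(\sigma_k)=\Pep-\Pem$, so this is precisely $(\Pep-\Pem)\cdot\tdds$. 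On the internal block the analogous computation, using $\phib_j'(s)=\cb_j/\cij(s)$ from \eqref{eq:def-varphi}, turns $\cij(\p)\cdot\tdds$ into $\cb_j\cdot\tdds$, that is, into $\Cb\cdot\tdds$. Hence $J^{-1}GJ$ has exactly the action of $\Gb$.

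It remains to match the domains. Because $\phiek,\phib_j$ and their inverses are Lipschitz and surjective, composition with them is an isomorphism of $\WepRpCl$ (resp.\ $\WepneCm$) onto itself and also maps $\CnRpCl$ (resp.\ $\CneCm$) into itself, so $J$ restricts to a $\rW^{1,p}$-isomorphism for which the composition $\Phi\circ J$ is well-defined. By the very definition $\Phib=(\Phie\cdot\Jp,\Phii\cdot\Jpb)=\Phi\circ J$, whence $f\in\ker(\Phib)$ if and only if $Jf\in\ker(\Phi)$; that is, $J$ maps $D(\Gb)=\ker(\Phib)$ bijectively onto $D(G)=\ker(\Phi)$. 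Combining this with the action computation gives $\Gb=J^{-1}GJ$, so $G$ and $\Gb$ are similar. The step requiring the most care is the regularity bookkeeping in this last paragraph: justifying the chain rule for $\WepRpCl\times\WepneCm$-functions composed with the Lipschitz changes of variable, and checking that $J$ is a genuine isomorphism on $\rW^{1,p}$ (not merely on $\rL^p$, where invertibility is already recorded) that is compatible with the spaces $\CnRpCl\times\CneCm$ on which $\Phi$ is defined.
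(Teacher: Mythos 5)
Your proposal is correct and follows essentially the same route as the paper: the same similarity operator $S=\diag(\Jp,\Jpb)$, the same chain-rule computation turning $\ce(\p)\cdot\tdds$ into $(\Pep-\Pem)\cdot\tdds$ and $\ci(\p)\cdot\tdds$ into $\Cb\cdot\tdds$, and the same identification $D(\Gb)=\ker(\Phi\circ S)$. The only (immaterial) differences are that you conjugate in the opposite direction, using $\phi'=1/|\ce|$ rather than $(\phi^{-1})'=|\ce|\circ\phi^{-1}$, and that you spell out the bi-Lipschitz regularity bookkeeping that the paper leaves implicit.
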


\begin{proof} 
Consider the invertible transformation
\begin{equation*}
S:=\diag\bigl(\Jp,\Jpb\bigr)
\in\sL(X)
\qquad\text{with inverse}\qquad
S^{-1}:=\diag\bigl(\Jpi,\Jpbi\bigr)
\in\sL(X).
\end{equation*}
We claim that
\begin{equation}\label{eq:sim-sG-sGr}
\Gb=S^{-1}\cdot G\cdot S.
\end{equation}
Let $\Gt:=S^{-1}GS$. Then, by definition, 
\[f\in D(\Gt) \iff S f\in D(G)=\ker(\Phi) \iff f\in\ker(\Phib),\]
i.e., $D(\Gt)=D(\Gb)$.
Next, $(\phi^{-1})'=|\ce(\p)|\circ\phi^{-1}$
implies that

\begin{equation*}
\Jp\cdot(\Pep-\Pem)\cdot\tdds\cdot \Jp^{-1}=
\ce(\p)\cdot\tdds.
\end{equation*}
Similarly, since $(\Cb\cdot\phib^{-1})'=\ci(\p)\circ\phib^{-1}$ we obtain
\begin{equation*}
\Jpb\cdot\Cb\cdot\tdds\cdot \Jpb^{-1}=
\ci(\p)\cdot\tdds.
\end{equation*}
This implies that $\Gt f=\Gb f$ for $f\in D(\Gt)=D(\Gb)$ which completes the proof of \eqref{eq:sim-sG-sGr}.
\end{proof}

Next we consider $\Gb$ as a domain perturbation of a simpler generator $A$. This perturbation problem is treated using a recent result stated in \autoref{thm:pert-bc-v3}.
To this end we first prove a series of lemmas. It is convenient to consider the external and internal part separately.

\smallbreak
\emph{External Part}. We introduce on $\Xe=\LpRpCl$ the operators
\begin{alignat}{3}\notag
&\Dme:=\tdds,&\qquad&D(\Dme):=\WepRpCl,\\
&\Dne:=\tdds,&\qquad&D(\Dne):=\bigl\{f\in D(\Dme):f(0)=0\bigr\},\notag
\intertext{and}\label{eq:def-Ame}
&\Aem:=\Dme\Pep-\Dme\Pem, && D(\Aem):= D(\Dme),\\
&\Ae:=\Dme\Pep-\Dne\Pem, && D(\Ae):=\bigl\{f\in D(\Dme):\Pem f(0)=0\bigr\}.\label{eq:def-A_e}
\end{alignat} 
Note that $\Dme$ and $-\Dne$  generate the strongly continuous left- and right-shift semigroups $\Tlet$ and $\Tret$ on $\Xe$, respectively, given by
\begin{alignat}{3}\label{eq:def-Telr}
&\bigl(\Tle(t)f\bigr)(\p):=f(\p+t),
\qquad
&\bigl(\Tre(t)f\bigr)(\p):=
\hat f(\p-t),
\end{alignat}
where as usual $\hat f$ denotes the extension of $f$ to $\RR$ by the value zero.
Then the following simple result holds.

\begin{lem}\label{lem:Ae-gen}
The operator $\Ae$
generates a $C_0$-semigroup $\Tet$ on $\Xe$ given by
\begin{equation}\label{eq:def-sTt-e}
\Te(t)=
\Tle(t)\Pep + \Tre(t)\Pem,\quad t\ge0.
\end{equation}
\end{lem}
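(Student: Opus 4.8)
The plan is to exploit that $\Pep$ and $\Pem$ are constant, complementary projections on $\CC^\ell$ and to realize $\Ae$ as a direct sum of two elementary generators on the corresponding invariant subspaces of $\Xe$. First I would note that the strict sign condition \eqref{eq:aek-beschr} forces $\cek(0)\ne0$ for every $k$, so $\ce(r)$ has no zero eigenvalues and hence $\Pep+\Pem=\Id$ with $\Pep\Pem=0$. Since these projections are constant and diagonal, they commute with $\tdds$ and with the shifts and induce a decomposition
\[
\Xe=\Pep\Xe\oplus\Pem\Xe
\]
into closed subspaces of $\LpRpCl$ consisting of the functions with values in $\rg(\Pep)$, respectively $\rg(\Pem)$. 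Both subspaces are invariant under the shift semigroups $\Tlet$ and $\Tret$.

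Next, writing $f=\Pep f+\Pem f=:f_++f_-$, I would identify the restricted actions: on $\Pep\Xe$ one has $\Ae f_+=\Dme f_+$, so $\Ae$ acts there as $\Dme$ (no boundary condition) and thus generates the left shift $\Tlet$; on $\Pem\Xe$ one has $\Ae f_-=-\Dne f_-$, so $\Ae$ acts as $-\Dne$ and generates the right shift $\Tret$. The only delicate point is the domain. Since $\Pem f(0)=f_-(0)$, the defining constraint $\Pem f(0)=0$ of $D(\Ae)$ is equivalent to $f_-(0)=0$ and imposes nothing on $f_+$; hence $D(\Ae)$ decomposes as $\bigl(\WepRpCl\cap\Pep\Xe\bigr)\oplus\{g\in\WepRpCl\cap\Pem\Xe:g(0)=0\}$, which is exactly the direct sum of the domains of $\Dme$ restricted to $\Pep\Xe$ and of $\Dne$ restricted to $\Pem\Xe$.

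Finally, I would invoke the standard fact that a direct sum of two $C_0$-semigroups on complementary invariant subspaces is again a $C_0$-semigroup whose generator is the direct sum of the two generators on the direct sum of their domains. Evaluating $\Tle(t)\Pep+\Tre(t)\Pem$ on $f_+$ and $f_-$ shows it restricts to $\Tlet$ on $\Pep\Xe$ and to $\Tret$ on $\Pem\Xe$, so by the previous step it is precisely the semigroup generated by $\Ae$; this yields $\Te(t)=\Tle(t)\Pep+\Tre(t)\Pem$ as claimed.

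The argument is entirely routine, as the statement itself suggests; if there is any subtlety it lies solely in the bookkeeping of the domain, namely checking that $\Pem f(0)=0$ is exactly the right-shift boundary condition on the minus-component and vacuous on the plus-component, together with the initial remark that $\Pep+\Pem=\Id$ so that the two pieces reconstitute all of $\Xe$.
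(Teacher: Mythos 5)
Your argument is correct and is precisely the standard reasoning the authors have in mind: the paper states this lemma without proof (as a "simple result"), and the intended justification is exactly the decomposition $\Xe=\Pep\Xe\oplus\Pem\Xe$ with $\Ae$ acting as $\Dme$ on the plus-part and as $-\Dne$ on the minus-part, the boundary condition $\Pem f(0)=0$ living only on the minus-component. Your bookkeeping of the domains and the identification of the direct-sum semigroup with $\Tle(t)\Pep+\Tre(t)\Pem$ are all accurate, so nothing is missing.
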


Now, in the context of \autoref{sec:dom-pert}, we have $\Ae\subset\Aem$ with domain
$D(\Ae)=\ker(\Le)$ for 
\[\Le:=\Pem\delta_0:D(\Aem)\to\dXe,\]
where $\delta_0$ denotes the point evaluation in $s=0$ and  $\dXe=\rg(\Pem)$. The next result follows easily by inspection from the definition of $\Qt$ in \eqref{eq:def-Rt-Sr}.

\begin{lem}\label{lem:BCS-wp-e}
For $\tn>0$ and given $u\in\rW^{2,p}_0([0,\tn],\dXe)$ the function $x:[0,\tn]\to\Xe$,
\begin{equation*} 
x(t,\p):=\Qt u
\end{equation*}
is a classical solution of the boundary control system
\begin{equation}
\tag{BCS$^e$}\label{eq:BCSe}
\begin{cases}
\dot x(t)=\Aem x(t),&0\le t\le\tn,\\
\Le x(t)=u(t),&0\le t\le\tn,\\
x(0)=0.
\end{cases}
\end{equation}
\end{lem}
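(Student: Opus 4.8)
The plan is to recognize $\Qt u=\uh(t-\p)$ as a rigid translation of the zero-extension $\uh$ of $u$, so that checking \eqref{eq:BCSe} comes down to the chain rule plus one algebraic observation: since $u(t)\in\dXe=\rg(\Pem)$ and $\Pep,\Pem$ are complementary constant projections (there are no vanishing velocities, so $\Pep+\Pem=\Id$), the function $x$ and all its derivatives are $\rg(\Pem)$-valued, whence $\Pem\uh=\uh$, $\Pep\uh=0$, and likewise for $\uh'$. This is precisely what makes the orientation of the shift match the action of $\Aem$ on $\rg(\Pem)$-valued functions.

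First I would dispose of the regularity. Because $u\in\rW^{2,p}_0([0,\tn],\dXe)$ vanishes together with its first derivative at both endpoints, its zero-extension satisfies $\uh\in\rW^{2,p}(\RR,\CC^{\ell})$. For each fixed $t\in[0,\tn]$ the function $r\mapsto\uh(t-r)$ is then the restriction to $\RRp$ of a $\rW^{2,p}(\RR)$-function, so $x(t)\in\WepRpCl=D(\Dme)=D(\Aem)$. For the dependence on $t$ I would invoke the standard fact that translating a $\rW^{2,p}$-function depends continuously differentiably, in the $\rL^p$-norm, on the translation parameter; this yields $x\in\rC^1([0,\tn],\Xe)$ with $\dot x(t)=\uh'(t-\p)$, and $t\mapsto\dot x(t)\in\Xe$ is continuous since $\uh'\in\rW^{1,p}(\RR)$. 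These are exactly the translation-continuity arguments already used in \cite[Lem.~2.2]{EKF:19}.

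Next I would verify the three identities directly. The chain rule gives the spatial derivative $(x(t))'=-\uh'(t-\p)$, while $\dot x(t)=\uh'(t-\p)$ from the previous step. Using $\Aem f=(\Pep-\Pem)f'$ for $f\in D(\Aem)$ together with $\Pep\uh'=0$ and $\Pem\uh'=\uh'$, I obtain
\[
\Aem x(t)=(\Pep-\Pem)\bigl(-\uh'(t-\p)\bigr)=\uh'(t-\p)=\dot x(t),
\]
which is the differential equation. For the boundary condition, evaluation at the spatial point $0$ gives $\Le x(t)=\Pem\,\uh(t)=\Pem u(t)=u(t)$, where $\uh(t)=u(t)$ because $t\in[0,\tn]$ and the last equality uses $u(t)\in\rg(\Pem)$. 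Finally $x(0,r)=\uh(-r)$ vanishes for all $r\in\RRp$, as $-r$ lies outside $[0,\tn]$ when $r>0$ and $\uh(0)=u(0)=0$ by the $\rW_0$-condition; hence $x(0)=0$ in $\Xe$.

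I do not anticipate a genuine obstacle here; the lemma is a direct verification, which is why the text announces it as following ``by inspection''. The only point requiring a little care is the regularity bookkeeping in the second step, namely that the zero-extension preserves $\rW^{2,p}$-regularity (this is where the vanishing endpoint data encoded in $\rW^{2,p}_0$ is used) and that the time-derivative may be taken in the $\Xe$-norm rather than merely pointwise in $r$.
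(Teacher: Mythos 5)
Your proof is correct and follows exactly the route the paper intends: the paper gives no written proof for this lemma, stating only that it ``follows easily by inspection from the definition of $\Qt$'', and your verification — zero-extension preserves $\rW^{2,p}$, the chain rule for the translated wave, $\Pep\uh=0$ and $\Pem\uh=\uh$ on $\rg(\Pem)$-valued data to match the sign of $\Aem$, and the evaluation $\Le x(t)=\Pem\uh(t)=u(t)$ — is precisely that inspection written out. The regularity bookkeeping you add (that $\rW^{2,p}_0$ endpoint conditions make $\uh\in\rW^{2,p}(\RR)$, so the time derivative exists in the $\Xe$-norm) is the only nontrivial point, and you handle it correctly.
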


\smallbreak
\emph{Internal Part}. 
We introduce on $\Xin=\LpneCm$ the operators
\begin{alignat}{3}\label{eq:def-Am}
&\Aim:=\Cb\cdot\tdds,&\qquad&D(\Aim):=\WepneCm,\\
&\Ai:=\Cb\cdot\tdds,&&D(\Ai):=\bigl\{f\in D(\Aim):f(0)=f(1)\bigr\}\label{eq:def-Ai}
\end{alignat}
Now the following is well-known.

\begin{lem}\label{lem:Ai-gen}
The operator $\Ai$ generates the rotation $C_0$-group $\Titt$ on $\Xin$ given by
\begin{equation}\label{eq:def-Tti}
\bigl(\Ti(t)f\bigr)(\p)=
\ft(\p+\cb\cdot t),
\end{equation}
where $\ft$ denotes the $1$-periodic extension to $\RR$ of $f$ defined on $[0,1]$.
\end{lem}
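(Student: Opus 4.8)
The plan is to verify directly that the prescribed family $\Titt$ is a strongly continuous group of isometries on $\Xin=\LpneCm$ and then to identify its generator with $\Ai$. Since $\Cb=\diag(\cb_1,\dots,\cb_m)$ is a constant real diagonal matrix whose entries $\cb_j=1/\phiij(1)$ are all non-zero, every $\Ti(t)$ decouples into the $m$ scalar translations $f_j\mapsto\tilde f_j(\,\cdot+\cb_j t\,)$ acting on the $1$-periodic extensions, so the whole analysis reduces to the scalar rotation group on the circle $\RR/\ZZ$.

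First I would check the three defining properties of a $C_0$-group. Because translation by $\cb_j t$ on $\RR/\ZZ$ is measure-preserving, each component preserves the $\rL^p$-norm, so every $\Ti(t)$ is an isometry on $\LpneCm$ and the operators are uniformly bounded; moreover $\Ti(0)=\Id$ is immediate. The group law $\Ti(t+s)=\Ti(t)\Ti(s)$ follows because the $1$-periodic extension of $\Ti(s)f$ is again $\tilde f(\,\cdot+\cb s\,)$, so that two successive shifts add. Finally, strong continuity $\|\Ti(t)f-f\|_p\to0$ as $t\to0$ is exactly the continuity of translation in $\rL^p$, valid for every $p\in[1,+\infty)$; one checks it on continuous functions and passes to the limit by density and uniform boundedness. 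Thus $\Titt$ is a $C_0$-group and has a well-defined generator $A$.

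For the generator I would argue componentwise. Writing $\tau=\cb_j t$, the difference quotient is $\tfrac{1}{t}\bigl(\Ti(t)f-f\bigr)_j=\cb_j\cdot\tfrac{1}{\tau}\bigl(\tilde f_j(\,\cdot+\tau\,)-\tilde f_j\bigr)$. Integrating against a periodic test function and shifting the difference onto the test function, one sees that $\rL^p$-convergence of this quotient forces $\tilde f_j$ to possess a weak derivative in $\rL^p$ on the circle, equal to the limit divided by $\cb_j$; the converse implication is the fundamental theorem of calculus. Hence $f\in D(A)$ if and only if each $1$-periodic extension $\tilde f_j$ lies in the periodic $W^{1,p}$-space, which by the embedding $\rW^{1,p}\hookrightarrow\rC$ means precisely $f_j\in\rW^{1,p}([0,1],\CC)$ with the matching condition $f_j(0)=f_j(1)$. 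This identifies $D(A)=\{f\in\WepneCm:f(0)=f(1)\}=D(\Ai)$ and gives $Af=\Cb f'=\Ai f$, so $A=\Ai$.

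The routine parts—isometry, the group law, and strong continuity—are standard facts about translation on $\rL^p$. The step I expect to require the most care is the exact identification of the domain: I must make the passage to the circle $\RR/\ZZ$ precise so that the boundary condition $f(0)=f(1)$ emerges naturally as periodicity, and then justify that $\rL^p$-convergence of the two-sided difference quotients characterizes membership in the periodic $W^{1,p}$-space with limit $\Cb f'$. The subtle point is the case $p=1$, where mere boundedness of difference quotients would only yield bounded variation; it is the assumed convergence of the quotients that upgrades this to a genuine weak $\rL^1$-derivative and thereby rules out any domain larger than $D(\Ai)$.
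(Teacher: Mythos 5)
Your proof is correct. The paper itself gives no argument for this lemma --- it is simply stated as ``well-known'' --- and your direct verification (reduction to scalar rotation groups on the circle, the standard translation-group computations for the group law, isometry and strong continuity, and the identification of the domain via convergence of difference quotients tested against periodic test functions) is exactly the standard proof one would supply; in particular your remark that for $p=1$ it is the \emph{convergence}, not mere boundedness, of the difference quotients that yields a genuine weak $\rL^1$-derivative and so pins the domain down to $D(\Ai)$ is the right point to single out as the only non-routine step.
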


As before we observe that in the context of \autoref{sec:dom-pert} we have $\Ai\subset\Aim$ with domain
\begin{equation*}
D(\Ai)=\bigl\{f\in D(\Aim):\Li f=0\bigr\}=\ker(\Li),
\end{equation*}
for
\begin{equation}\label{eq:def-Li}
\Li:=\delta_1-\delta_0:D(\Aim)\to\dXi,
\end{equation}
where $\delta_s$ denotes the point evaluation in $s\in\{0,1\}$ and  $\dXi=\CC^{m}$.

\begin{lem}\label{lem:BCS-wp}
Let $0<\tn\le\frac1{\|\cb\|_\infty}$. Then for every given $v\in\rW^{2,p}_0([0,\tn],\dXe)$ the function $x:[0,\tn]\to\Xin$ defined by
\begin{equation*} 
x(t,\p):=\bigl(\Pip\St -\Pim\Rt \bigr)v
\end{equation*}
is a classical solution of the boundary control system
\begin{equation}
\tag{BCS$^i$}\label{eq:BCS-i}
\begin{cases}
\dot x(t)=\Aim x(t),&0\le t\le\tn,\\
\Li x(t)=v(t),&0\le t\le\tn,\\
x(0)=0.
\end{cases}
\end{equation}
\end{lem}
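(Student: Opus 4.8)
The plan is to exploit that both $\Cb=\diag(\cb_1,\dots,\cb_m)$ and the spectral projections $\Pip,\Pim$ are diagonal, so that the system \eqref{eq:BCS-i} decouples into $m$ independent scalar transport problems. Writing $x=(x_1,\dots,x_m)^\top$, the $j$-th equation is $\partial_t x_j=\cb_j\,\partial_s x_j$ on $[0,1]$ with the single scalar boundary condition $x_j(t,1)-x_j(t,0)=v_j(t)$ and $x_j(0,\cdot)=0$. Since the $j$-th coordinate lies in $\rg(\Pip)$ exactly when $\cb_j>0$ and in $\rg(\Pim)$ exactly when $\cb_j<0$ (note $\cb_j\ne0$), only one summand of $x=(\Pip\St-\Pim\Rt)v$ survives in each coordinate, giving $x_j(t,s)=\hat v_j\bigl(t-(1-s)/|\cb_j|\bigr)$ when $\cb_j>0$ and $x_j(t,s)=-\hat v_j\bigl(t-s/|\cb_j|\bigr)$ when $\cb_j<0$, where $\hat v_j$ is the zero-extension of $v_j$ to $\RR$.

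First I would check that each $x_j$ solves its scalar equation classically. In either sign regime $x_j$ depends on $(t,s)$ only through the characteristic variable $s+\cb_j t$, so a direct differentiation yields $\partial_t x_j=\cb_j\,\partial_s x_j$ pointwise. The regularity needed for a classical solution, namely $x(t)\in D(\Aim)=\WepneCm$ for each $t$ and $x\in\rC^1([0,\tn],\Xin)$, comes from the hypothesis $v\in\rW^{2,p}_0([0,\tn],\dXi)$: extending by zero gives $\hat v\in\rW^{2,p}(\RR,\CC^m)\subset\rC^1$, and after the affine change of variable dictated by $\cb_j$ both $s\mapsto\partial_s x_j(t,s)$ and $t\mapsto\partial_t x_j(t,\cdot)$ lie in $\rL^p$ and depend continuously on $t$.

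Next I would verify the initial and boundary conditions. At $t=0$ the arguments $-(1-s)/|\cb_j|$ and $-s/|\cb_j|$ are non-positive for $s\in[0,1]$, so $\hat v_j$ vanishes there and $x(0)=0$. For the boundary condition I compute $x_j(t,1)-x_j(t,0)$ in each regime: if $\cb_j>0$ then $x_j(t,1)=\hat v_j(t)=v_j(t)$ and $x_j(t,0)=\hat v_j(t-1/\cb_j)$, whereas if $\cb_j<0$ then $x_j(t,0)=-\hat v_j(t)=-v_j(t)$ and $x_j(t,1)=-\hat v_j(t-1/|\cb_j|)$.

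The one place where the standing restriction $0<\tn\le 1/\|\cb\|_\infty$ enters is precisely the vanishing of these remaining endpoint terms, and this is the step I expect to be the crux. Since $|\cb_j|\le\|\cb\|_\infty$ gives $t\le\tn\le 1/\|\cb\|_\infty\le 1/|\cb_j|$, the argument $t-1/|\cb_j|$ stays non-positive on all of $[0,\tn]$, so $\hat v_j(t-1/|\cb_j|)=0$. In other words, the time restriction ensures that within the horizon the transported datum has not yet crossed the whole unit interval, so $\Li=\delta_1-\delta_0$ reads off exactly $v_j(t)$ with no interference from the opposite endpoint. Assembling the coordinates then yields $\Li x(t)=v(t)$ for all $t\in[0,\tn]$, completing the verification in the same spirit as the external case \autoref{lem:BCS-wp-e}.
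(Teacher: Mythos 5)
Your proposal is correct and follows essentially the same route as the paper: the paper verifies the first and third equations ``by inspection'' and establishes the boundary condition through exactly the endpoint evaluations you compute, namely $\delta_1(\St v)=\vh(t)=v(t)$, $\delta_0(\Rt v)=v(t)$ and $\delta_0(\St v)=\delta_1(\Rt v)=\vh\bigl(t-\tfrac1\cbm\bigr)=0$, with the restriction $\tn\le\frac1{\|\cb\|_\infty}$ entering precisely where you place it. Your coordinate-wise decoupling via the diagonality of $\Cb$, $\Pip$, $\Pim$ is just a componentwise rewriting of the paper's vectorial computation, so there is nothing substantively different to report.
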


\begin{proof} The first equation in \eqref{eq:BCS-i} follows easily by inspection.
The second equation holds since for $v\in\rW^{1,p}_0([0,\tn],\CC^m)$ we have
\begin{equation*} 
\begin{aligned}
&\delta_0(\St v)=\vh\bigl(t-\tfrac1\cbm\bigr)=0,
&\quad&\delta_1(\St v)=\vh(t-0)=v(t),
\\
&\delta_0(\Rt v)=\vh(t-0)=v(t),
&\quad&\delta_1(\Rt v)=\vh\bigl(t-\tfrac1\cbm\bigr)=0.
\end{aligned}
\end{equation*}
Finally, the third equation in \eqref{eq:BCS-i} follows from the definition of $\vh$.
\end{proof}

We are now well-prepared for the proof of our main result.

\begin{proof}[Proof of \autoref{thm:gen-Lp}]
We first assume that $c(\p,\p)$ is diagonal, i.e., $\u(\p,\p)\equiv\IId$. Then, by \autoref{lem:Gb}, $G$ is similar to $\Gb$, hence it suffices to prove that $\Gb$ given in \eqref{eq:def-sGr} generates a $C_0$-semigroup on $X$. This will be done using the results from \autoref{sec:dom-pert}.

\smallbreak
For the operators $\Aem$, $\Ae$ and $\Aim$, $\Ai$ given by \eqref{eq:def-Ame}--\eqref{eq:def-A_e} and \eqref{eq:def-Am}--\eqref{eq:def-Ai}, respectively, we define on $X=\Xe\times\Xin$ the operators
\begin{alignat*}{3}
A_m:&=\diag(\Aem,\Aim),&\quad
D(A_m):&=D(\Aem)\times D(\Aim),\\
A:&=\diag(\Ae,\Ai),&
D(A):&=D(\Ae)\times D(\Ai).
\end{alignat*}
Then $\Gb,A\subset A_m$ with domains $D(\Gb)=\ker(\Phib)=\ker(\Phie\Jp,\Phii\Jpb)$ and $D(A)=\ker(L)$ where
\begin{equation*}
L:=\diag(\Le,\Li)=
\begin{pmatrix}
\Pem\delta_0&0\\0&\delta_1-\delta_0
\end{pmatrix}:D(A_m)\to\dX=\dXe\times\dXi.
\end{equation*} 
Moreover, by \autoref{lem:Ae-gen} and \autoref{lem:Ai-gen}, $A$ generates a $C_0$-semigroup $\Tt$ on $X$ given by
\begin{equation}\label{eq:def-sTt}
T(t)=\diag\bigl(\Te(t),\Ti(t)\bigr),\quad t\ge0.
\end{equation}
Hence, the assertion follows if we verify the assumptions (i)--(iv) in \autoref{thm:pert-bc-v3} adapted to the present situation. Let $0<\tn\le\frac1{\|\cb\|_\infty}$.

\smallbreak
(i) For $t\in[0,\tn]$ and $u\in\rW^{2,p}_0([0,\tn],\dXe)$ define $\sBte u:=\Qt u\in\Xe$. Similarly, for
$v\in\rL^p([0,\tn],\dXi)$ define $\sBti v:=\bigl(\Pip\St -\Pim\Rt \bigr)v\in\Xin$ and put 
\[\sBt:=\diag\bigl(\sBte,\sBti\bigr):\dX=\dXe\times\dXi\to X=\Xe\times\Xin.\]
Then $\Bttn\subset\sL(\rL^p([0,\tn],\dX),X)$ is strongly continuous. Moreover, by \autoref{lem:BCS-wp-e} and \autoref{lem:BCS-wp}, $\sx(t):=\sBt(u,v)^\top$ solves 
\begin{equation*}
\tag{BCS} 
\begin{cases}
\dot\sx(t)=A_m\sx(t),&0\le t\le\tn,\\
L\sx(t)=\tbinom{u(t)}{v(t)},&0\le t\le\tn,\\
\sx(0)=0,
\end{cases}
\end{equation*}
hence the assertion follows. 

\smallbreak
(ii) Using the terminology introduced in \autoref{rem:C-admiss},
we have to show that $\Phib=(\Phibe,\Phibi)$ is $p$-admissible for the semigroup $\Tt$ generated by $A$. By the representations of $T(t)$ in \eqref{eq:def-sTt} and $\Te(t)$, $\Ti(t)$ in \eqref{eq:def-sTt-e},  \eqref{eq:def-Tti}, this is equivalent to  the $p$-admissibility of
\begin{enumerate}[(1)]
\item  $\Phibe$ for the semigroup $\Tet$ given in \eqref{eq:def-sTt-e}, and
\item $\Phibi$ for the group $\Titt$ given in \eqref{eq:def-Tti}. 
\end{enumerate}

To show (1), we first choose  $f\in D(\Ae)\subset\Xe=\LpRpCl$. Then similarly as in the proof of  \cite[Lem.~2.2]{EKF:19} it follows for that
\begin{align*} 
\int_0^{\tn}\bigl\|\Phibe\,\Te(t)f\bigr\|_{\dX}^p\dt
&=\int_0^{\tn}\Bigl\|\int_0^{+\infty}d\eta(s)\,\bigl(\Pep f(s+t)+\Pem\fh(s-t)\bigr)\Bigr\|_{\dX}^p\dt\\
&\le\|\eta\|^{p-1}\cdot\int_0^{+\infty}\int_0^{\tn}\bigl\|\fh(s\pm t)\bigr\|_{\CC^{\ell} }^p\dt\,d|\eta|(s)\\ 
&\le\|\eta\|^{p}\cdot\|f\|^p_{p},
\end{align*}
where $\eta$ is given by \eqref{eq:Phi-generic-e} for $\Psi=\Phibe$ and  $\fh$ denotes the extension of $f$ to $\RR$ by the value zero. This proves (1).

\smallbreak
Next, we verify (2). Let $f\in D(\Ai)\subset\Xin=\LpneCm$. Then, again as in the proof of  \cite[Lem.~2.2]{EKF:19}, it follows that
\begin{align*}
\int_0^{\tn}\bigl\|\Phibi\,\Ti(t)f\bigr\|_{\dX}^p\dt
&=\int_0^{\tn}\Bigl\|\int_0^1d\eta(s)\,\ft(s+\cb\cdot t)\Bigr\|_{\dX}^p\dt\\
&\le\|\eta\|^{p-1}\cdot\int_0^1\int_0^{\tn}\bigl\|\ft(s+\cb\cdot t)\bigr\|_{\CC^m}^p\dt\,d|\eta|(s)\\
&\le{\|\eta\|^p}\cdot{M^p}\cdot\int_0^1\bigl\| f(r)\bigr\|_{\CC^m}^p\dr\\
&\le{\|\eta\|^p}\cdot M^p\cdot\|f\|^p_{p},
\end{align*}
where $\eta$ is given by \eqref{eq:Phi-generic-e} for $\Psi=\Phibi$, $M:=\max\{|\varphi_1(1)|,\ldots,|\varphi_m(1)|\}$ and  $\ft$ denotes the $1$-periodic extension of $f$ to $\RR$.
This proves (2) and completes the proof of (ii).

\smallbreak
(iii)  By part~(i) we have 
\begin{equation*}
\sBt=
\begin{pmatrix}
\Qt&0\\
0&\Pip\St-\Pim\Rt
\end{pmatrix}
\in\sL\bigl(\rL^p\bigl([0,\tn],\dX\bigr),X\bigr),
\quad t\in[0,\tn],
\end{equation*}
where $\Qt,\St$ and $\Rt$ are defined in \eqref{eq:def-Rt-Sr}. 
This combined with \autoref{lem:bdd-ext} implies the assertion and also yields
\begin{equation}\label{eq:rep-1-F} 
\sQ_{\tn}=
\sRtn.
\end{equation}

\smallbreak
(iv) This follows immediately from \eqref{eq:rep-1-F} and the invertibility assumption on $\sR_{\tn}$.

\smallbreak
Summing up, we conclude that for $\u(\p,\p)\equiv\IId$, i.e., for $c(\p,\p)=\diag(\ce(\p),\ci(\p))$, the operator $G$ in \eqref{eq:def-G-general} generates a $C_0$-semigroup if $\sRtn$ given by \eqref{eq:def-sR} (for $\u(\p,\p)\equiv\IId$) is invertible.

\smallbreak
Now assume that $c(\p,\p)$ is given by \eqref{eq:rep-a(s)} for a Lipschitz continuous matrix function $\u(\p,\p)$. Let $\lambda(\p,\p):=\diag(\ce(\p),\ci(\p))$. Then, via the similarity transformation induced by $\u(\p,\p)$, we see that
\[
G=\u(\p,\p) \lambda(\p,\p)\u(\p,\p)^{-1}\cdot\tdds,\quad
D(G)=\ker(\Phi),
\]
is similar to the operator
\begin{equation*}
\tilde G:=
\lambda(\p,\p)\cdot\tdds+P,\quad
D(\tilde G):=\ker\bigl(\Phi\cdot\u(\p,\p)\bigr),
\end{equation*}
for
\[
P:=
\lambda(\p,\p)\u^{-1}(\p,\p)\u'(\p,\p)
\in\sL(X).
\]
Hence, by similarity and bounded perturbation $G$, is a generator if and only if $\Gt$ is. 
However, by what we proved previously for diagonal $\lambda(\p,\p)$, the matrix $\Gt$ is a generator if $\sRtn$ given by \eqref{eq:def-sR} is invertible. This completes the proof of \autoref{thm:gen-Lp}.
\end{proof}

Since an operator $G$ generates a group if and only if $\pm G$ both generate semigroups the above result gives also conditions for $G$ to be a group generator. 
In particular, if the external part is empty, i.e., if $X=\Xin=\LpneCm$, the boundary space $\dX=\CC^m$ for $\pm G$ remains invariant.
By applying \autoref{thm:gen-Lp} to the velocity matrices $\pm c(\p,\p)$  we then obtain the operators $\sRtn^{i,+}, \sRtn^{i,-}\in\sL(\rL^p([0,\tn],\CC^{m}))$ given by
\begin{equation}\label{eq:def-sR+-}
\begin{aligned}
(\sRtn^{i,+} v)(t)&=
\Phibi\cdot\bigl(\Pip\St -\Pim\Rt \bigr)v\\
(\sRtn^{i,-} v)(t) &=
\Phibi\cdot\bigl(\Pip\Rt -\Pim\St \bigr)v
\end{aligned} 
\end{equation}
for $t\in[0,\tn]$ and $v\in\rW^{1,p}_0([0,\tn],\CC^m)$. Then the following holds.

\begin{cor} \label{cor:gen-group}
Let \autoref{asu:s-asu-Lp}  be satisfied.
If there exists $\tn>0$ such that the operators $\sRtn^{i,+},\sRtn^{i,-}\in\sL(\rL^p([0,\tn],\CC^{m}))$ given by \eqref{eq:def-sR+-} are both invertible, then the operator
\begin{equation*}
\begin{aligned}
G^i&:=c(\p)\cdot\tdds,\\
D(G^i)&:=\bigl\{f\in\WepneCm: \Phii f=0\bigr\},
\end{aligned}
\end{equation*}
generates a $C_0$-group on $\Xin=\LpneCm$.
\end{cor}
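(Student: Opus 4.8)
The plan is to use the standard fact (see \cite[Sect.~II.3]{EN:00}) that $G^i$ generates a $C_0$-group on $\Xin$ if and only if both $G^i$ and $-G^i$ generate $C_0$-semigroups. Since the external part is empty we have $X=\Xin=\LpneCm$ and the boundary space is simply $\dX=\dXi=\CC^m$, which in particular does not depend on the sign of the velocity matrix. The strategy is therefore to apply \autoref{thm:gen-Lp} twice: once to $G^i$ with velocity matrix $c(\p)$ and once to $-G^i$ with velocity matrix $-c(\p)$.

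First I would apply \autoref{thm:gen-Lp} directly to $G^i$. With $\ell=0$ the operator $\sRtn$ from \eqref{eq:def-sR} collapses to its internal block $v\mapsto\Phibi(\Pip\St-\Pim\Rt)v$, which is precisely $\sRtn^{i,+}$. Hence the invertibility of $\sRtn^{i,+}$ yields, via \autoref{thm:gen-Lp}, that $G^i$ generates a $C_0$-semigroup on $\Xin$.

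The more delicate step is the application of \autoref{thm:gen-Lp} to $-G^i$, for which I must track how the data in \autoref{asu:s-asu-Lp} transform under $c(\p)\mapsto-c(\p)$. Writing $-c(\p)=\qi(\p)\cdot(-\ci(\p))\cdot\qi(\p)^{-1}$ shows that $-c(\p)$ again satisfies \autoref{asu:s-asu-Lp}: the sign condition \eqref{eq:aek-beschr} is invariant under $\cij\mapsto-\cij$, while $\qi$ and $\qi^{-1}$ are unchanged. The spectral projections interchange, so the positive, respectively negative, projection of $-\ci$ equals $\Pim$, respectively $\Pip$. Moreover, since $\phiij$ changes sign and $\cb_j=1/\phiij(1)$ changes sign with it, the product $\phib_j=\cb_j\cdot\phiij$ is unchanged; hence $\Jpb$, and with it $\Phibi=\Phii\cdot\qi(\p)\cdot\Jpb$, is unchanged. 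Likewise $\cbm=(|\cb_1|,\ldots,|\cb_m|)$ is unchanged, so $\Rt$ and $\St$ are unchanged as well. Substituting these transformed data into \eqref{eq:def-sR} produces the operator $v\mapsto\Phibi(\Pim\St-\Pip\Rt)v=-\sRtn^{i,-}v$. Because invertibility is insensitive to an overall sign, the assumed invertibility of $\sRtn^{i,-}$ gives the invertibility of this operator, and \autoref{thm:gen-Lp} applied to $-c(\p)$ shows that $-G^i$ generates a $C_0$-semigroup.

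Combining the two applications, both $\pm G^i$ are generators, whence $G^i$ generates a $C_0$-group. I expect the only genuine obstacle to be the bookkeeping in the previous paragraph: one must check carefully that the two sign changes entering $\phib_j=\cb_j\cdot\phiij$ cancel, so that $\Phibi$, $\Rt$ and $\St$ stay fixed while only $\Pip$ and $\Pim$ are interchanged. This is exactly what makes the transformed $\sRtn$ coincide, up to the irrelevant global sign, with $\sRtn^{i,-}$, and closes the argument.
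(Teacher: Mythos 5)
Your proposal is correct and follows exactly the route the paper intends: apply \autoref{thm:gen-Lp} to $\pm c(\p)$, note that with $\ell=0$ the boundary space $\dX=\CC^m$ is unchanged, and check that under $c\mapsto-c$ the quantities $\phib_j$, $\Jpb$, $\Phibi$, $\cbm$, $\Rt$, $\St$ are invariant while $\Pip$ and $\Pim$ interchange, so the resulting operator is $-\sRtn^{i,-}$. The sign-cancellation bookkeeping you carry out is precisely the content the paper leaves implicit in the paragraph preceding the corollary.
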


\subsection{A Necessary Condition}\label{sec:necessary} 

In this subsection we will deduce a necessary  condition for $G$ defined in \eqref{eq:def-G-general} to generate a $C_0$-semigroup on $X=\LpRpCl\times\LpneCm$. In the next section we then apply this result to particular sets of boundary conditions and show that in many important cases the invertibility condition of the operator $\sRtn$ in \autoref{thm:gen-Lp} is also necessary for $G$ to be a generator.

In order to do so we first note that by \autoref{lem:Gb} and the final part of the proof of \autoref{thm:gen-Lp} the operator $G$ with domain $D(G)=\ker(\Phi)$ is, up to a bounded perturbation, similar to $\Gb$ with domain $D(\Gb)=\ker(\Phib)$ given by \eqref{eq:def-sGr}
for the boundary operator
\begin{equation}\label{eq:def-Phiq}
\Phib:=(\Phie,\Phii)\cdot\u(\p,\p)\cdot\diag(\Jp,\Jpb)
\in\sL(\CnRpCl\times\CneCm,\dX).
\end{equation}
We then consider $\Gb$ as a boundary perturbation of
\begin{equation}\label{eq:def-A}
A:=\diag\Bigl(\bigl(\Pep-\Pem\bigr)\cdot\tdds,\Cb\cdot\tdds\Bigr),
\quad
D(A):=\ker(L)
\end{equation}
for the boundary operator 
\begin{equation}\label{eq:def-L}
L:=\diag\bigl(\Pem\delta_0,\Pip\delta_1+\Pim\delta_0\bigr)\in\sL\bigl(\CnRpCl\times\CneCm,\dX\bigr).
\end{equation}
Since the generator property is invariant under similarity transformations and bounded perturbations, \autoref{thm:NC-gen} will give the following result.

\begin{thm}\label{thm:NC-1}
Let the operators $A$ and $G$ be given by \eqref{eq:def-A} and \eqref{eq:def-G-general}, respectively. Moreover, assume that the boundary operator $\Phib$ in \eqref{eq:def-Phiq} can be decomposed as
\begin{equation*}
\Phib=\Phi_0+B+V\cdot L
\end{equation*}
for some $\Phi_0\in\sL(\CnRpCl\times\CneCm,\dX)$, $\B\in\sL(X,\dX)$, a non-invertible $V\in\sL(\dX)$, and $L$ given by \eqref{eq:def-L}.
If $G$ generates a $C_0$-semigroups on $X$ then
\begin{equation}\label{eq:est-PnLl}
\lim_{\lambda\to+\infty}\lambda^{\frac1p}\cdot\|\Phi_0 L_\lambda\|=+\infty
\end{equation}
where $L_\lambda\in\sL(\dX,X)$ is defined by $L_\lambda x:=\epsl(\p,\p)\cdot x$ for
\begin{equation*}
\epsl(r,s):= \diag\bigl(e^{-\lambda r},
{\Pip\cdot e^{\lambda(s-1)\cdot\Cb^{-1}}+\Pim\cdot e^{\lambda s\cdot\Cb^{-1}}}\bigr),
\quad r\in\RR_+,\;s\in[0,1]
\end{equation*}
 and $\Cb\in\rM_m(\RR)$ given by \eqref{eq:def-Cb}.
\end{thm}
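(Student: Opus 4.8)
The plan is to reduce the statement to the abstract boundary-perturbation setting of the appendix and then to invoke \autoref{thm:NC-gen}. First I would use \autoref{lem:Gb} together with the similarity-and-bounded-perturbation argument at the end of the proof of \autoref{thm:gen-Lp}: since the generator property is preserved under both operations, $G$ generates a $C_0$-semigroup on $X$ if and only if $\Gb$ does, where $\Gb=A_m|_{\ker\Phib}$ has the constant-velocity form \eqref{eq:def-sGr} and $\Phib$ is the transformed boundary operator \eqref{eq:def-Phiq}. Thus it suffices to produce the stated necessary condition for $\Gb$.

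Next I would realise $\Gb$ as a boundary perturbation of the operator $A$ in \eqref{eq:def-A} with domain $\ker(L)$, $L$ as in \eqref{eq:def-L}; by \autoref{lem:Ae-gen} and \autoref{lem:Ai-gen} the operator $A$ generates the $C_0$-semigroup $\Tt=\diag(\Te,\Ti)$, so $A$ is an admissible unperturbed generator. The computational heart of this step is to check that the map $L_\lambda$ in the statement is exactly the Dirichlet (Greiner) operator associated with the pair $(A_m,L)$, i.e.\ that for large real $\lambda$ one has $(\lambda-A_m)L_\lambda=0$ and $L\,L_\lambda=\Id_{\dX}$. Both identities follow by direct inspection: differentiating $\epsl$ shows that each column of $L_\lambda x$ is a $\lambda$-eigenfunction of $A_m$ lying in $X$ (here the decay of $e^{-\lambda r}$, and the matching of $\Pip$ with the factor $e^{\lambda(s-1)\Cb^{-1}}$ and of $\Pim$ with $e^{\lambda s\Cb^{-1}}$, guarantee boundedness in the correct direction), while evaluating at $s=0$, $s=1$ and using $\Pip\Pim=0$ gives $L\,L_\lambda=\Id_{\dX}$.

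With $L_\lambda$ identified as the Dirichlet operator, I would feed the decomposition $\Phib=\Phi_0+B+V\cdot L$ with non-invertible $V$ into \autoref{thm:NC-gen}. Using $L\,L_\lambda=\Id_{\dX}$ one computes $\Phib L_\lambda=\Phi_0 L_\lambda+B\,L_\lambda+V$, so that along the boundary $\Phib L_\lambda$ is a perturbation of the singular matrix $V$ by the two terms $\Phi_0 L_\lambda$ and $B\,L_\lambda$. The abstract theorem encodes the following mechanism: if $\Gb$ generates a $C_0$-semigroup then $\lambda\in\rho(\Gb)$ for large $\lambda$, the Greiner resolvent formula $R(\lambda,\Gb)=R(\lambda,A)-L_\lambda(\Phib L_\lambda)^{-1}\Phib R(\lambda,A)$ holds, and the Hille--Yosida bound $\|R(\lambda,\Gb)\|=O(1/\lambda)$ must be met. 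Since $\|L_\lambda\|_{\sL(\dX,X)}$ scales like $\lambda^{-1/p}$ (a direct $\rL^p$-computation for the exponential profiles, and the source of the factor $\lambda^{1/p}$) and $B$ is bounded, one has $\|B\,L_\lambda\|=O(\lambda^{-1/p})$; testing $\Phib L_\lambda$ on a unit vector $x_0\in\ker V$ gives $\|(\Phib L_\lambda)^{-1}\|\ge(\|\Phi_0 L_\lambda\|+\|B\,L_\lambda\|)^{-1}$. Hence, were $\lambda^{1/p}\|\Phi_0 L_\lambda\|$ to stay bounded along some sequence $\lambda_n\to+\infty$, the matrix $\Phib L_{\lambda_n}$ would lie within $O(\lambda_n^{-1/p})$ of the singular $V$, forcing $\|(\Phib L_{\lambda_n})^{-1}\|\ge c\,\lambda_n^{1/p}$ for some $c>0$ and thereby violating the resolvent bound; this yields \eqref{eq:est-PnLl}.

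The step I expect to be the main obstacle is the last one: matching the present concrete data to the precise hypotheses of \autoref{thm:NC-gen} (in particular the $p$-admissibility of the boundary operators for $\Tt$, already established in the proof of \autoref{thm:gen-Lp}, and the verification that $L_\lambda$ is the genuine Dirichlet operator), and correctly pinning down the scaling rate so that $\lambda^{1/p}$ and $\|L_\lambda\|^{-1}$ agree up to constants, which is what converts the abstract divergence statement into the explicit form \eqref{eq:est-PnLl}. Care is also needed because $V$ is only assumed non-invertible rather than non-injective; in the finite-dimensional space $\dX$ these coincide, but to extract the lower bound on $\|(\Phib L_\lambda)^{-1}\|$ one should select the test vector in $\ker V$ (or, dually, in $\ker V^{*}$) according to which defect of $V$ is exploited.
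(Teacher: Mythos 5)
Your plan follows the paper's own route essentially step for step: reduce to $\Gb$ by similarity and bounded perturbation, identify $L_\lambda$ as the Greiner--Dirichlet operator for the pair $(A_m,L)$, establish the two-sided bound $\|L_\lambda x\|\asymp\lambda^{-1/p}\|x\|$ (equivalently $\|L\|_\lambda\le M\lambda^{1/p}$), absorb the bounded term $B$ using $\lambda^{1/p}\|BL_\lambda\|=O(1)$, and invoke \autoref{thm:NC-gen}. The only cosmetic difference is that the paper derives the final contradiction inside \autoref{thm:NC-gen} via $\|L_\lambda(\Phib L_\lambda)^{-1}\|\to0$ (from $\lambda R(\lambda,\Gb)f\to f$ and compactness of the unit ball of the finite-dimensional $\dX$) rather than directly from a Hille--Yosida resolvent estimate as you sketch, but since you defer that step to the abstract theorem this is not a gap.
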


\begin{proof} First assume that $B=0$. To verify the hypotheses of \autoref{thm:NC-gen} we note that $A,\Gb\subset A_m$ for 
\begin{equation}\label{eq:def-Am2}
A_m:=\diag\Bigl(\bigl(\Pep-\Pem\bigr)\cdot\tdds,\Cb\cdot\tdds\Bigr),
\quad
D(A_m):=\WepRpCl\times\WepneCm.
\end{equation}
Moreover, $A$ generates a diagonal shift-semigroup on $X$, shifting to the left on the ranges of $P^{e,i} _+$ and to the right on the ranges of $P^{e,i} _-$. Since $\ker(\lambda-A_m)=\{\epsl\cdot x:x\in\dX\}$ and $L\epsl\cdot x=x$ for all $\lambda>0$ and $x\in\dX$, it follows that $L_\lambda=(L|_{\ker(\lambda-A_m)})^{-1}$, i.e., it coincides indeed with the Dirichlet operator introduced in \autoref{lem:Ll}. 

Next, we estimate $\|\Ll x\|_p^p$ for $\lambda>0$ and $x=(x^e,x^i)=(x^e,(x^i_k)_1^m)\in\dX = \rg(\Pem)\times\CC^m$. In fact, since
\begin{equation*}
\lim_{\lambda\to+\infty}e^{\frac{-p\lambda}{\cb_k}}=0\text{ if }\cb_k>0
\quad\text{and}\quad
\lim_{\lambda\to+\infty}e^{\frac{p\lambda}{\cb_k}}=0\text{ if }\cb_k<0,
\end{equation*}
there exists $\lambda_0>0$ such that
for $\cb_{\mathrm{min}}:=\min\{|\cb_k|:k=1,\ldots,m\}>0$ and $\lambda\ge\lambda_0$ it holds
\begin{align}\label{eq:est-Ll}
\|\Ll x\|_p^p&=\tfrac1{p\lambda}\cdot\Bigl(\|x^e\|^p+
\sum_{\cb_k>0}\cb_k\cdot\bigl(1-e^{\frac{-p\lambda}{\cb_k}}\bigr)\cdot|x^i_k|^p+
\sum_{\cb_k<0}|\cb_k|\cdot\bigl(1-e^{\frac{p\lambda}{\cb_k}}\bigr)\cdot|x^i_k|^p
\Bigr)\\
%&\ge\tfrac{\cb_{\mathrm{min}}}{2p\lambda}\cdot\|x\|_p^p\notag
&\ge\tfrac{1}{p\lambda} \left(\|x^e\|^p+\cb_{\mathrm{min}}\cdot \|x^i\|^p \right)
%\ge \begin{cases}
%     \tfrac{1}{p\lambda}\cdot\|x\|_p^p & \text{if }  \cb_{\mathrm{min}} \ge 1,\\
%     \tfrac{\cb_{\mathrm{min}}}{p\lambda}\cdot\|x\|_p^p & \text{otherwise}.
%\end{cases}
\notag.
 \end{align}
  This implies that there exist $M>0$ and $\lambda_0>0$ such that
\begin{equation*}
\|\Ll x\|\ge M^{-1}\cdot\lambda^{-\frac1p}\cdot\|x\|\quad\text{for all }x\in\dX,\;\lambda\ge\lambda_0.
\end{equation*}
Using this estimate we conclude that for all $0\ne f=\Ll Lf\in\ker(\lambda-A_m)$ we have
\begin{align*}
\|Lf\|&=\|Lf\|\cdot\frac{\|f\|}{\|\Ll Lf\|}\le\|Lf\|\cdot\frac{\|f\|}{M^{-1}\cdot\lambda^{-\frac1p}\cdot\|Lf\|}
= M\cdot\lambda^{\frac1p}\cdot\|f\|,
\end{align*}
i.e., $\|L\|_\lambda\le M\cdot\lambda^{\frac1p}$. The assertion now follows immediately from \autoref{thm:NC-gen}. 

Finaly, take $B\in\sL(X,\dX)$. Since, by \eqref{eq:est-Ll}  there exists $\tilde M>0$ such that $\|\Ll\|\le\tilde M\cdot\lambda^{-\frac1p}$ for all $\lambda>0$, $\lambda^{\frac1p}\cdot\|B\Ll\|$ remains bounded as $\lambda\to+\infty$. By replacing $\Phin$ by $\Phin+B$, by the first part of the proof this implies the assertion for arbitrary $B$.
\end{proof}

At a first glance this result might seem rather artificial, however, as we will see in the next subsection, it significantly simplifies (like \autoref{thm:gen-Lp}) in case the boundary operator $\Phi$ is mainly given by point evaluations in $r=0\in\RRp$ and $s=0,1\in[0,1]$.

\begin{rem}
In \autoref{thm:NC-1} we used in the internal part the boundary operator $L^i=\Pip\delta_1+\Pim\delta_0$ for the unperturbed generator $\Ai=\Cb\cdot\dds$, cf. \eqref{eq:def-L}, which generates nilpotent left/right-shift semigroups. This choice is different from \eqref{eq:def-Li} in the proof of \autoref{thm:gen-Lp} which was associated to left/right-rotation groups. We note that in \eqref{eq:def-L} one could also choose 
\begin{equation}\label{eq:def-L1}
L=\diag\bigl(\Pem\delta_0,\delta_1-\delta_0\bigr).
\end{equation}
By similar computations as above it follows again that $\|L\|_\lambda\ge M^{-1}\cdot\lambda^{-\frac1p}$ for a suitable constant $M>0$ while $\Ll$ in this case is given by $\Ll x=\epsl\cdot x$ for
\begin{equation}\label{eq:def-L2}
\epsl(r,s):= \diag\bigl(e^{-\lambda r},
\Pip\cdot e^{\lambda(s-1)\cdot\Cb^{-1}}\cdot\bigl(Id-e^{-\lambda\cdot\Cb^{-1}}\bigr)-\Pim\cdot e^{\lambda s\cdot\Cb^{-1}}\cdot\bigl(Id-e^{\lambda\cdot\Cb^{-1}}\bigr)\bigr).
\end{equation}
Summing up,  \autoref{thm:NC-1} remains true if the operators $L$ and $\Ll$ are replaced by their counterparts defined by \eqref{eq:def-L1} and \eqref{eq:def-L2}.
\end{rem}

\subsection{Reformulations of the Boundary Conditions}\label{subcsec:reformulation}

Next, we study the invertibility of $\sRtn$ appearing in \autoref{thm:gen-Lp}. To this end we assume that as in \eqref{eq:Phi-generic-e} the boundary operators are given by
\begin{align*}
\Phie h&=\int_0^{+\infty}\kern-7pt d\eta^e(r)\, h(r)
\quad\text{for all}\quad
h\in\LpRpCl,\\
\Phii h&=\int_0^1 d\eta^i(s)\, h(s)
\quad\text{for all}\quad
h\in\LpneCm,
\end{align*} 
for functions $\eta^e:\RRp\to\sL(\CC^\ell,\dX)$ and $\eta^i:[0,1]\to\sL(\CC^m,\dX)$ of bounded variation. Using these representations we define the operators
\begin{alignat}{3}\label{eq:def-Vne}
\Vne:&=\lim_{r\to0^+}\eta^e(r)-\eta^e(0)=\int_{\{0\}}d\eta^e(r)=\nu^e(\{0\})\in\sL(\CC^\ell,\dX),\\\label{eq:def-Vni}
\Vni:&=\lim_{s\to0^+}\eta^i(s)-\eta^i(0)\int_{\{0\}}d\eta^i(s)=\nu^i(\{0\})\in\sL(\CC^m,\dX),\\\label{eq:def-Vei}
\Vei:&=\eta^i(1)-\lim_{s\to1^-}\eta^i(s)=\int_{\{1\}}d\eta^i(s)=\nu^i(\{1\})\in\sL(\CC^m,\dX),
\end{alignat}
where $\nu^e$ and $\nu^i$ denote the vector measures on $\RRp$ and $[0,1]$ associated to the functions $\eta^e$ and $\eta^i$, respectively.

\smallskip
As we will see, the invertibility of $\sRtn$ can be characterized in terms of these three finite dimensional operators. Combined with \autoref{thm:gen-Lp} this gives a simple condition implying the generator property of the operator $G$ in \eqref{eq:def-G-general}. In particular, it follows that the well-posedness of \eqref{eq:acp} for $G$ as in \eqref{eq:def-G-general} only depends on the boundary condition localized to the endpoints of the ``edges'' $\RRp$ of the external part and $[0,1]$ of the internal part.
Recall that $\dX=\dXe\times\dXi=\rg(\Pem)\times\CC^m\subseteq\CC^\ell\times\CC^m$, cf.~\eqref{eq:def-dX}.

\begin{lem}\label{lem:inv-R0}
Let $\Vne,\Vni,\Vei$ be defined by \eqref{eq:def-Vne}--\eqref{eq:def-Vei} and let $\tmax:=\frac1{\|\cb\|_\infty}$. Then the following conditions are equivalent.
\begin{enumerate}[(a)]
\item The operator
\[ 
\sR_0:=\bigl(\Vne\qe(0),\Vei\qi(1)\Pip-\Vni\qi(0)\Pim\bigr)\in\sL(\dX)
\]
is invertible.
\item There exists $0<\tn\le\tmax$ such that $\sRtn\in\sL(\rL^p([0,\tn],\dX))$ given by \eqref{eq:def-sR} is invertible.
\end{enumerate}
\end{lem}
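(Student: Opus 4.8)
The plan is to prove the equivalence $(a)\iff(b)$ by relating the ``full'' operator $\sRtn$ on $\rL^p([0,\tn],\dX)$ to its finite-dimensional ``localized'' counterpart $\sR_0$ obtained by retaining only the point masses of the boundary measures at the endpoints of the edges. The guiding idea is that as $\tn\to0^+$, the operator $\sRtn$ behaves, in a suitable sense, like pointwise multiplication by $\sR_0$: the delay operators $\Qt,\Rt,\St$ defined in \eqref{eq:def-Rt-Sr} concentrate the value of the control near $t$, and after pairing with the Riemann--Stieltjes integrals defining $\Phibe,\Phibi$ only the atomic parts $\Vne,\Vni,\Vei$ at $r=0$ and $s=0,1$ survive in the limit, multiplied by the corresponding values $\qe(0),\qi(0),\qi(1)$ coming from $\Phib=(\Phie\,\qe(\p)\,\Jp,\Phii\,\qi(\p)\,\Jpb)$.

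\emph{First}, I would decompose each measure $\eta^e,\eta^i$ into its atomic part at the relevant endpoint plus a remainder that is continuous there. Using the estimate \eqref{eq:est-Utn} from \autoref{lem:bdd-ext}, $\|U_{\tn}^\Psi\|\le|\eta|(I_{\tn})$, and the fact that $|\eta|(I_{\tn})\to|\eta|(\{0\})$ (resp.\ to the atomic mass at the endpoint) as $\tn\to0^+$, I expect to show that the ``non-local'' part of $\sRtn$ has operator norm tending to $0$ as $\tn\to0^+$, while the atomic part contributes exactly multiplication by $\sR_0$. Concretely, writing $M_{\tn}$ for the operator of pointwise multiplication by the constant matrix $\sR_0$ on $\rL^p([0,\tn],\dX)$, the claim is that $\|\sRtn-M_{\tn}\|\to0$ as $\tn\to0^+$. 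Since $M_{\tn}$ is invertible on $\rL^p([0,\tn],\dX)$ precisely when $\sR_0\in\sL(\dX)$ is invertible, with $\|M_{\tn}^{-1}\|=\|\sR_0^{-1}\|$ independent of $\tn$, a standard Neumann-series perturbation argument then yields the equivalence.

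\emph{For the direction} $(a)\Rightarrow(b)$: if $\sR_0$ is invertible, choose $\tn$ small enough that $\|\sRtn-M_{\tn}\|<\|\sR_0^{-1}\|^{-1}$; then $\sRtn=M_{\tn}\bigl(\Id+M_{\tn}^{-1}(\sRtn-M_{\tn})\bigr)$ is invertible. \emph{For} $(b)\Rightarrow(a)$, I would argue contrapositively: if $\sR_0$ is not invertible, pick $0\ne x_0\in\dX$ with $\sR_0 x_0=0$ (or $\sR_0^\top$ singular, working with the adjoint), and for each $\tn$ build a normalized test function in $\rW_0^{1,p}([0,\tn],\dX)$ concentrated near the endpoint in the direction $x_0$; the norm-estimate above forces $\|\sRtn\|$ on such functions to be small relative to their norm as $\tn\to0^+$, contradicting a uniform lower bound that invertibility of $\sRtn$ for some $\tn\le\tmax$ would require. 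Some care is needed because the statement of $(b)$ only asserts existence of one good $\tn$, so I would instead show directly that singularity of $\sR_0$ propagates to singularity of $\sRtn$ for every $\tn\in(0,\tmax]$, e.g.\ by exhibiting an approximate null sequence.

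\emph{The main obstacle} will be making the limiting identification $\sRtn\approx M_{\tn}$ fully rigorous, in particular controlling the off-atom contributions uniformly in the $\rL^p$-operator norm rather than just pointwise, and correctly tracking how the factors $\qe(0)$, $\qi(0)\Pim$, $\qi(1)\Pip$ and the projections $\Pip,\Pim$ emerge from the composition $\Phibi(\Pip\St-\Pim\Rt)$ via the substitutions \eqref{eq:def-varphi} and the Jacobians $\Jp,\Jpb$. The appearance of $\qi(1)$ (rather than $\qi(0)$) on the $\Vei$-term, and of the projection placements, must be read off carefully from \eqref{eq:def-Rt-Sr}: $\St$ evaluates near $s=1$ (picking up the atom $\Vei$ at $1$) while $\Rt$ evaluates near $s=0$ (picking up $\Vni$ at $0$), and the composition with the change of variables $\phib$ sends the normalized endpoints $s=0,1$ back to the original endpoints $s=0,1$ where $\qi$ is evaluated. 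Once this bookkeeping is settled the analytic estimates are routine consequences of \eqref{eq:est-Utn}.
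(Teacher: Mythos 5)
Your overall strategy coincides with the paper's: split the boundary operator into its atomic part at the endpoints (giving $\Phi_0=(\Vne\delta_0,\Vei\delta_1+\Vni\delta_0)$, for which the input--output map is exactly pointwise multiplication by $\sR_0$, independent of $\tn$) plus a remainder, and use the estimate \eqref{eq:est-Utn} together with $|\eta|(I_{\tn})\to|\eta|(\{0\})=0$ for the remainder to get $\|\sRtn-M_{\tn}\|\to0$ as $\tn\to0^+$. Your direction (a)$\Rightarrow$(b) via the Neumann series is then exactly the paper's argument, including the observation that $\|M_{\tn}^{-1}\|=\|\sR_0^{-1}\|$ is independent of $\tn$. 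Your bookkeeping of where $\qe(0)$, $\qi(0)\Pim$, $\qi(1)\Pip$ come from is also correct.

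The direction (b)$\Rightarrow$(a) is where you diverge from the paper and where your sketch has a genuine gap. You correctly note that the smallness estimate is only available as $\tn\to0^+$ and hence cannot be applied at the fixed (possibly large) $\tn$ furnished by (b), but the fix you propose --- ``exhibit an approximate null sequence for $\sRtn$ concentrated near the endpoint in the direction $x_0$'' --- is left unspecified at precisely the point where it could fail. If you concentrate the test functions near $t=0$, the response $(\sRtn v)(t')$ for $t'$ ranging over all of $[0,\tn]$ samples the \emph{entire} boundary measure, not just its atoms, and there is no reason for $\|\sRtn v\|_p$ to be small. What makes the argument work is concentrating $v$ near the \emph{right} endpoint $t=\tn$: by the support structure of $\Qt,\Rt,\St$ (extension by zero), a function supported in $[\tn-\eps,\tn]$ produces a response that vanishes for $t'<\tn-\eps$ and for $t'\ge\tn-\eps$ samples only the part of the measure on $I_\eps$, so the non-atomic contribution is $O(|\eta|((0,\eps]))\cdot\|v\|_p\to0$ while the atomic contribution is $\sR_0 x_0\cdot g=0$. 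This is exactly the causality of $\sRtn$, which is the ingredient your sketch is missing; the paper exploits it differently, showing $\sRtn|_{[0,t]}=\sR_t$ and that the inverse restricts as well, so that $\|(\sR_t)^{-1}\|$ stays bounded as $t\to0^+$ and one can write $\sR_0=\sR_t\bigl(\Id-(\sR_t)^{-1}\sR_t^{\Phi-\Phi_0}\bigr)$ and invoke the Neumann series once more. Either route closes the gap, but as written your proposal does not yet contain the decisive step.
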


\begin{proof} Since in this proof we consider the ``input-output map'' $\sRtn$ for different boundary operators we use for $\Psi=(\Psi^e,\Psi^i)\in\sL(\CnRpCl\times\CneCm,\dX)$ the notations
\begin{align*}
\bar\Psi&:=(\bar\Psi^e,\bar\Psi^i):=\bigl(\Psi^e\cdot\qe(\p)\cdot\Jp,\Psi^i\cdot\qi(\p)\cdot\Jpb\bigr)\in\sL\bigl(\CnRpCl\times\CneCm,\dX\bigr)
\intertext{and}
\sRtn^\Psi&:=
\bigl(\bar\Psi^e\cdot\Qt,\bar\Psi^i\cdot\bigl(\Pip\cdot\psi -\Pim\bigr)\cdot\Rt\bigr)\in\sL(\rL^p([0,\tn],\dX)),
\end{align*}
where $\psi(s):=1-s$, cf.~\eqref{eq:def-sR}, \eqref{eq:Bti-psi}. Observe that then $\sR_{\tn}^{\Phi}=\sRtn$.
Let
\begin{equation*}
\Phin=(\Phien,\Phiin)
:=\bigl(\Vne\delta_0,\Vei\delta_1+\Vni\delta_0\bigr).
\end{equation*}
Then, a simple computation shows that $\sR_{\tn}^{\Phin}=\sR_0$
 for every $\tn>0$. Next, we show that
\begin{equation}\label{eq:sRt->0}
\bigl\|\sR^{\Phi-\Phin}_{\tn}\bigr\|\to0\quad\text{as }\tn\to0^+.
\end{equation}
For this it suffices to verify that $\|U^\Psi_{\tn}\|\to0$ for
\begin{equation}\label{eq:Psi-e-i}
\Psi=
\begin{cases}
\bigl(\Phibe-\bar\Phi^e_0\bigr),&\text{in the external case},\\
\bigl(\Phibi-\bar\Phi^i_0\bigr)\cdot\bigl(\Pip\cdot\psi -\Pim\bigr),&\text{in the internal case},
\end{cases}
\end{equation}
where $U^\Psi_{\tn}$ is defined by \eqref{eq:def-Utn}. However, by \eqref{eq:est-Utn} we have $\|U^\Psi_{\tn}\|\le|\eta|(I_{\tn})$ for $\eta$ as in \eqref{eq:Phi-generic-e} and $I_{\tn}$ given by \eqref{eq:def-I_tn}. Since by \eqref{eq:def-Vne}--\eqref{eq:def-Vei} in both cases, the external and the internal one,  $|\eta|(\{0\})=0$ we infer from $I_{t_1}\subset I_{\tn}$ for $0<t_1<\tn\le\tmax$ and
\[
\bigcap_{0<\tn\le t_{\max}} I_{\tn}=\{0\}
\]
that
\[
0=|\eta|\bigl(\{0\}\bigr)=\lim_{\tn\to0^+}|\eta|\bigl(I_{\tn}\bigr)
\]
which implies \eqref{eq:sRt->0}.
Finally, observe that
\begin{equation}\label{eq:dec-sR}
\sR^\Phi_{\tn}=\sR^{\Phin}_{\tn}+\sR^{\Phi-\Phin}_{\tn}=\sR_0+\sR^{\Phi-\Phin}_{\tn}
\end{equation}

\smallbreak
(a)$\Rightarrow$(b). By \eqref{eq:dec-sR} we have
\begin{equation}\label{eq:rep-sRP}
\sR^\Phi_{\tn}=\sR_0\cdot\bigl(\Id+\sR_0^{-1}\cdot\sR^{\Phi-\Phin}_{\tn}\bigl).
\end{equation}
Since $\|\sR_0^{-1}\|_{\sL(\rL^p([0,\tn],\dX))}$ is independent of $\tn>0$, the representation \eqref{eq:rep-sRP} combined with \eqref{eq:sRt->0} implies that $\sR_{\tn}^\Phi$ is invertible for $\tn>0$ sufficiently small.

\smallbreak
(b)$\Rightarrow$(a). We first observe that by causality we have $\sR_{\tn}^\Phi|_{[0,t]}=\sR_t^\Phi$ and
 $(\sR_{\tn}^\Phi)^{-1}|_{[0,\tn]}=(\sR_{t}^\Phi)^{-1}$ for all $t\in(0,\tn]$.
Hence, $\sR_{t}^\Phi$ is invertible and $\|(\sR_{t}^\Phi)^{-1}\|_{\sL(\rL^p([0,t],\dX))}\le\|(\sR_{\tn}^\Phi)^{-1}\|_{\sL(\rL^p([0,\tn],\dX))}$ for all $t\in(0,\tn]$. Combined with \eqref{eq:dec-sR} (with $\tn$ replaced by $t$) this implies
\begin{equation}\label{eq:rep-sR0}
\sR_0=\sR_{t}^\Phi\cdot\bigl(\Id-(\sR_{t}^\Phi)^{-1}\cdot\sR^{\Phi-\Phin}_{t}\bigr).
\end{equation}
Since $\|(\sR_{t}^\Phi)^{-1}\|_{\sL(\rL^p([0,t],\dX))}$ remains bounded for $t\to0^+$, the representation \eqref{eq:rep-sR0} together with \eqref{eq:sRt->0} (again with $\tn$ replaced by $t$) implies that $\sR_0$ considered as an operator on $\rL^p([0,t],\dX)$ is invertible for $t>0$ sufficiently small. Clearly, this implies that $\sR_0\in\sL(\dX)$ is invertible as well.
\end{proof}

\begin{rem}\label{rem:inv-BP}
Let $\B\in\sL(X,\dX)$. Since $\dX$ is finite dimensional there exists $k=(k^e,k^i)\in\rL^q(\RRp,\sL(\CC^l,\dX))\times\rL^q([0,1],\sL(\CC^m,\dX))$ for $\frac1p+\frac1q=1$ such that for $f=(\fe,\fin)\in X$ we have
\[
\B f=\int_0^\infty k^e(r)\cdot f^e(r)\dr+\int_0^1 k^i(s)\cdot f^i(s)\ds.
\]
Hence, the matrices $\Vne,\Vni,\Vei$ defined in \eqref{eq:def-Vne}--\eqref{eq:def-Vei} remain invariant if we change the boundary operator $\Phi$ into $\Phi+\B$. By the previous lemma this implies that the invertibility of $\sRtn$ for sufficiently small $\tn>0$ is invariant under bounded perturbations of the boundary operator $\Phi$. 
\end{rem}

\autoref{lem:inv-R0} together with \autoref{thm:gen-Lp} gives the following.

\begin{cor}\label{cor:gen-sR0}
Let $\Vne,\Vni,\Vei$ be given by \eqref{eq:def-Vne}--\eqref{eq:def-Vei}. If the operator
\[ 
\sR_0=\bigl(\Vne\qe(0),\Vei\qi(1)\Pip-\Vni\qi(0)\Pim\bigr)\in\sL(\dX)
\]
is invertible, then the operator $G$ defined in \eqref{eq:def-G-general} generates a $C_0$-semigroup on the space $X=\LpRpCl\times\LpneCm$. 
\end{cor}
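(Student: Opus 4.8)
The plan is to obtain the statement as an immediate consequence of the two preceding results, \autoref{thm:gen-Lp} and \autoref{lem:inv-R0}. The logical chain is short: invertibility of the finite-dimensional operator $\sR_0\in\sL(\dX)$ should be converted, via \autoref{lem:inv-R0}, into invertibility of the operator $\sRtn$ for some sufficiently small time $\tn>0$, and this is exactly the hypothesis under which \autoref{thm:gen-Lp} delivers the generator property of $G$. So the proof will be essentially two lines of citation, with the real content having already been carried out in the lemma and the theorem.

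First I would invoke the implication (a)$\Rightarrow$(b) of \autoref{lem:inv-R0}: since by assumption the operator $\sR_0=\bigl(\Vne\qe(0),\Vei\qi(1)\Pip-\Vni\qi(0)\Pim\bigr)\in\sL(\dX)$ is invertible, there exists $0<\tn\le\tmax=\frac1{\|\cb\|_\infty}$ such that the operator $\sRtn\in\sL(\rL^p([0,\tn],\dX))$ defined in \eqref{eq:def-sR} is invertible. Then I would apply \autoref{thm:gen-Lp} directly: the existence of a single $\tn>0$ for which $\sRtn$ is invertible is precisely its hypothesis, so its conclusion gives that the operator $G$ from \eqref{eq:def-G-general} generates a $C_0$-semigroup on $X=\LpRpCl\times\LpneCm$. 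This completes the argument.

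Since the corollary is a pure combination of two already-established facts, I do not expect any genuine obstacle. The only point deserving a moment's care is a bookkeeping check, namely that the operator $\sRtn$ appearing in \autoref{lem:inv-R0} is literally the operator $\sRtn$ whose invertibility is assumed in \autoref{thm:gen-Lp}; both are given by the same formula \eqref{eq:def-sR}, so this matching is automatic. One should likewise confirm that the finite-dimensional operator $\sR_0$ is built from exactly the boundary masses $\Vne,\Vni,\Vei$ of \eqref{eq:def-Vne}--\eqref{eq:def-Vei} together with the boundary values $\qe(0),\qi(0),\qi(1)$ of the transformation matrix $q$ that enter through $\Phib$. Granting these identifications, which hold by construction, the two-step deduction goes through verbatim and no further computation is required.
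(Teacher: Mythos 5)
Your proposal is correct and coincides with the paper's own argument: the corollary is stated immediately after the sentence ``\autoref{lem:inv-R0} together with \autoref{thm:gen-Lp} gives the following,'' i.e.\ the paper also just chains the implication (a)$\Rightarrow$(b) of \autoref{lem:inv-R0} with \autoref{thm:gen-Lp}. Your bookkeeping remarks about matching $\sRtn$ and the boundary masses are accurate and need no further elaboration.
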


Denote by  $P_\pm^{c(\p,\p)}$ the spectral projections of $c(\p,\p)$ associated to the positive/negative eigenvalues of $c(\p,\p)$. Then $\u(\p,\p)P_\pm=P_\pm^{c(\p,\p)}\u(\p,\p)$ where the columns of $q(\p,\p)$ contain linearly independent eigenvectors of $c(\p,\p)$. Combining this with \autoref{cor:gen-sR0}
and the fact that a linear map on a finite dimensional vector space is surjective if and only if it is invertible immediately implies the following non-trivial generalization of the main result Thm.~1.5 in \cite{JMZ:15} to possibly non-local boundary conditions and non-compact metric graphs. 
At the same time it generalizes the main result Thm.~4.10 in \cite{JW:19} where only equations on semi-axis are considered. 

\begin{cor}
Let $Z_-^{e}(0)\subseteq\CC^\ell$ be the eigenspace of $\ce(0)$ corresponding to its negative eigenvalues. Similarly, for $s=0,1$ let $Z_\pm^{i}(s)\subseteq\CC^m$ denote the eigenspaces of $\ci(s)$ corresponding to its positive/negative eigenvalues. 
Finally, let $\Vne,\Vni,\Vei$ be given by \eqref{eq:def-Vne}--\eqref{eq:def-Vei}. 
If
\[
\Vne Z^e_-(0)+\Vei Z^i_+(1)+\Vni Z^i_-(0)=\dX
\]
then $G$ defined in \eqref{eq:def-G-general} generates a $C_0$-semigroup on $X=\LpRpCl\times\LpneCm$.
\end{cor}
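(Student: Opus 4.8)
The plan is to deduce the statement entirely from \autoref{cor:gen-sR0}, which already packages the similarity transformation, the bounded perturbation, and the passage from $\sRtn$ to the finite-dimensional operator $\sR_0$. That corollary guarantees generation as soon as
\[
\sR_0=\bigl(\Vne\qe(0),\,\Vei\qi(1)\Pip-\Vni\qi(0)\Pim\bigr)\in\sL(\dX)
\]
is invertible. Since $\dX=\rg(\Pem)\times\CC^m$ is finite dimensional, $\sR_0$ is invertible if and only if it is surjective, i.e.\ exactly when $\rg(\sR_0)=\dX$. Thus the whole problem reduces to computing $\rg(\sR_0)$ and recognizing the hypothesis as the statement that this range is all of $\dX$.

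To compute the range I would split the internal factor according to the sign of the velocities. Because each $\cij$ has strictly constant, non-vanishing sign by \eqref{eq:aek-beschr}, there is no zero eigenvalue, so $\Pip+\Pim=\Id_m$ and $\CC^m=\rg(\Pip)\oplus\rg(\Pim)$. Viewing $\sR_0$ as a row operator on $\dX=\rg(\Pem)\times\CC^m$ and evaluating the second block separately on $\rg(\Pip)$ (where $\Pip$ is the identity and $\Pim$ vanishes) and on $\rg(\Pim)$ (vice versa), the mixed terms disappear and one obtains
\[
\rg(\sR_0)=\Vne\,\qe(0)\,\rg(\Pem)+\Vei\,\qi(1)\,\rg(\Pip)+\Vni\,\qi(0)\,\rg(\Pim),
\]
the sign in front of $\Vni\qi(0)\Pim$ being irrelevant for the span.

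Finally I would identify the three transformed coordinate subspaces with the eigenspaces of the velocity matrix. By the intertwining relation $q(\p,\p)P_\pm=P_\pm^{c(\p,\p)}q(\p,\p)$, equivalently because the columns of $q(\p,\p)$ are eigenvectors of $c(\p,\p)$, the images $\qe(0)\rg(\Pem)$, $\qi(1)\rg(\Pip)$, $\qi(0)\rg(\Pim)$ are precisely the eigenspaces $Z^e_-(0)$, $Z^i_+(1)$, $Z^i_-(0)$ of the velocity matrix at the respective endpoints, since each is the span of exactly those eigenvectors whose eigenvalue has the prescribed sign. Substituting, $\rg(\sR_0)=\Vne Z^e_-(0)+\Vei Z^i_+(1)+\Vni Z^i_-(0)$, which equals $\dX$ by assumption; hence $\sR_0$ is surjective, therefore invertible, and \autoref{cor:gen-sR0} gives that $G$ generates a $C_0$-semigroup. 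No step here is genuinely hard once \autoref{cor:gen-sR0} is available; the only point demanding care is the range computation, where one must track which spectral projection survives on each summand of $\CC^m$ and verify that $\rg(\Pip)$ and $\rg(\Pim)$ are truly complementary (ensured by the non-vanishing of the velocities), so that the range is exactly the sum of the three pieces and none of $\CC^m$ is lost.
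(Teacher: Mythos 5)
Your proposal is correct and follows exactly the route the paper indicates in the paragraph preceding the corollary: apply \autoref{cor:gen-sR0}, use that surjectivity equals invertibility on the finite-dimensional space $\dX$, and identify $\qe(0)\rg(\Pem)$, $\qi(1)\rg(\Pip)$, $\qi(0)\rg(\Pim)$ with the eigenspaces of the velocity matrix via the intertwining relation $q(\p,\p)P_\pm=P_\pm^{c(\p,\p)}q(\p,\p)$. Your explicit computation of $\rg(\sR_0)$ by splitting $\CC^m=\rg(\Pip)\oplus\rg(\Pim)$ just spells out what the paper leaves as ``immediately implies.''
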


Next, we consider the case where the boundary conditions are given via ``boundary matrices''. Recall that $\dX=\rg(\Pem)\times\CC^m\subseteq\CC^{\ell + m }$
 and denote by 
\[\n:= \dim \dX = \rank(\Pem) + m.\]

\begin{cor}\label{cor:gen-matrix}
Let $\Vne\in M_{\n\times\ell}(\CC)$, $\Vni,\Vei\in M_{\n\times m}(\CC)$ and $\B\in\sL(X,\dX)$. Then
\begin{equation*}
\begin{aligned}
G&=c(\p,\p)\cdot\tdds,\\
D(G)&=\left\{f=\tbinom{\fe}{\fin}\in\WepRpCl\times\WepneCm:\Vne\fe(0)+\Vni\fin(0)-\Vei\fin(1)=\B f
\right\},
\end{aligned}
\end{equation*}
generates a $C_0$-semigroup on $X=\LpRpCl\times\LpneCm$ if and only if the operator
\begin{equation}\label{eq:R0-matrix}
\sR_0:=\bigl(\Vne\qe(0),\Vei\qi(1)\Pip-\Vni\qi(0)\Pim\bigr)\in\sL(\dX)
\end{equation}
is invertible.
\end{cor}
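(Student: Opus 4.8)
The plan is to recognize this statement as the ``matrix boundary condition'' special case of the general theory, in which the boundary operator $\Phi=(\Phie,\Phii)$ is, up to the bounded perturbation $\B$, nothing but point evaluations at the endpoints. Indeed, the domain condition reads $\Phi f=\Vne\fe(0)+\Vni\fin(0)-\Vei\fin(1)-\B f$, so in the representation \eqref{eq:Phi-generic-e} the functions of bounded variation $\eta^e,\eta^i$ are supported in $\{0\}$ and $\{0,1\}$ and their point masses are exactly $\Vne,\Vni,\Vei$. Hence the operator $\sR_0$ in \eqref{eq:R0-matrix} is precisely the one appearing in \autoref{lem:inv-R0} and \autoref{cor:gen-sR0}.

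For the \emph{sufficiency} I would simply invoke \autoref{cor:gen-sR0}: if $\sR_0$ is invertible, then $G$ generates a $C_0$-semigroup. The perturbation $\B$ causes no difficulty, since by \autoref{rem:inv-BP} the matrices $\Vne,\Vni,\Vei$, and thus $\sR_0$ itself, are invariant under bounded perturbations of $\Phi$; so this direction needs no further computation.

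The real work is the \emph{necessity}, which I would establish by contraposition through \autoref{thm:NC-1}. Assuming $\sR_0$ \emph{not} invertible, the goal is to produce a decomposition $\Phib=\Phi_0+B+V\cdot L$ of the transformed operator \eqref{eq:def-Phiq}, with $L$ as in \eqref{eq:def-L} and $V$ \emph{non-invertible}, for which the growth condition \eqref{eq:est-PnLl} fails. The decisive step is to test $\Phib$ against the functions $\Ll x=\epsl\cdot x\in\ker(\lambda-A_m)$, where $x=(x^e,x^i)\in\dX=\dXe\times\dXi$. Writing $\Phib=\Phib^{\mathrm{pt}}-\bar B$ as its point-evaluation part plus a bounded remainder $\bar B$, and using $\phi(0)=0$, $\phib(0)=0$, $\phib(1)=1$, one finds
\begin{equation*}
\Phib^{\mathrm{pt}}\,\Ll x=\Vne\qe(0)x^e+\Vni\qi(0)\,(\Ll x)^i(0)-\Vei\qi(1)\,(\Ll x)^i(1).
\end{equation*}
Reading off the boundary values of $\epsl$ and noting that at $s=0$ the $\Pip$-block carries $e^{-\lambda\Cb^{-1}}$ while at $s=1$ the $\Pim$-block carries $e^{\lambda\Cb^{-1}}$, both decaying exponentially because $\Cb^{-1}$ is positive on $\rg(\Pip)$ and negative on $\rg(\Pim)$, one obtains for a suitable $c>0$
\begin{equation*}
\Phib^{\mathrm{pt}}\,\Ll x=R_\infty\,x+O\bigl(e^{-c\lambda}\bigr)\|x\|,\qquad
R_\infty\,x:=\Vne\qe(0)x^e-\bigl(\Vei\qi(1)\Pip-\Vni\qi(0)\Pim\bigr)x^i.
\end{equation*}
Thus $R_\infty=\sR_0\cdot\diag(\Id_{\dXe},-\Id_{\dXi})$, so that $R_\infty$ is non-invertible precisely when $\sR_0$ is.

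To finish, I would set $V:=R_\infty$ (non-invertible by hypothesis), $B:=-\bar B$, and $\Phi_0:=\Phib^{\mathrm{pt}}-V\cdot L$, giving the required decomposition. Since $L\Ll=\Id_{\dX}$, this yields $\Phi_0\Ll x=\Phib^{\mathrm{pt}}\Ll x-R_\infty x=O(e^{-c\lambda})\|x\|$, whence $\lambda^{\frac1p}\|\Phi_0\Ll\|\to0$ as $\lambda\to+\infty$. This contradicts \eqref{eq:est-PnLl}, so \autoref{thm:NC-1} forces $G$ \emph{not} to be a generator; by contraposition, if $G$ generates then $\sR_0$ is invertible. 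I expect the main obstacle to be this necessity computation, in particular correctly identifying the limit operator $R_\infty$ together with the sign twist $\diag(\Id_{\dXe},-\Id_{\dXi})$ relating it to $\sR_0$, and verifying that the remainder $\Phi_0\Ll$ decays strictly faster than the critical rate $\lambda^{-\frac1p}$; the bounded bookkeeping for $\bar B$ is routine in view of \eqref{eq:est-Ll}.
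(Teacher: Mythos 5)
Your proposal is correct and follows essentially the same route as the paper: sufficiency via \autoref{rem:inv-BP} and \autoref{cor:gen-sR0}, and necessity by contraposition through \autoref{thm:NC-1} with the decomposition $\Phib=\Phi_0+V\cdot L$, where your $R_\infty=\sR_0\cdot\diag(\Id_{\dXe},-\Id_{\dXi})$ is exactly the non-invertible $V$ the paper chooses and your remainder $\Phi_0\Ll$ is the same exponentially decaying term. The only cosmetic difference is that you carry the bounded term $\B$ along as the $B$-component of the decomposition, whereas the paper discards it at the outset by appealing to the $B$-tolerance already built into \autoref{thm:NC-1}.
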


\begin{proof}
The ``if'' part follows from \autoref{rem:inv-BP} and \autoref{cor:gen-sR0} by choosing $\Phie:=  \Vne\delta_0$ and $\Phii:=\Vni\delta_0-\Vei\delta_1$ which implies $\nu^e(\{0\})=\Vne$, $\nu^i(\{0\})=\Vni$, $\nu^i(\{1\})=\Vei$ in \eqref{eq:def-Vne}--\eqref{eq:def-Vei}. 

For the ``only if'' part we assume that $\sR_0$ is not invertible and have to show that in this case $G$ is not a generator.
To this end we use \autoref{thm:NC-1} where, without loss of generality, we assume that $\B=0$. Observe, that as $\sR_0$ also 
\[
V:=\sR_0\cdot\diag(\Id_{\dXe},-\Id_{\dXi})=\bigl(\Vne\qe(0),\Vni\qi(0)\Pim-\Vei\qi(1)\Pip\bigr)\in\sL(\dX)
\]
is not invertible. Next we decompose 
\[\Phib=\bigl(\Vne\qe(0)\delta_0,\Vni\qi(0)\delta_0-\Vei\qi(1)\delta_1\bigr)=\Phin+V\cdot L\]
where $L$ is given by \eqref{eq:def-L}. Hence,
\begin{equation*}
\Phin=\Phib-V\cdot L=\bigl(\Vne\qe(0)\Pep\delta_0,\Vni\qi(0)\Pip\delta_0-\Vei\qi(1)\Pim\delta_1\bigr).
\end{equation*}
This yields, independently on the generator property of $G$, that
\begin{align*}
\|\Phin\Ll\|&=\bigl\|\bigl(0,\Vni\qi(0)\cdot\Pip e^{-\lambda\Cb^{-1}}-\Vei\qi(1)\cdot\Pim e^{\lambda\Cb^{-1}}\bigr)\bigr\|\\
&\le M\cdot e^{-\cb_{\mathrm{min}}\cdot\lambda}
\end{align*}
for a suitable constant $M>0$, $\cb_{\mathrm{min}}:=\min\{|\cb_k|:k=1,\ldots,m\}>0$ and $\lambda>0$. Thus,
\begin{equation*}
\lim_{\lambda\to+\infty}\lambda^{\frac1p}\cdot\|\Phi_0 L_\lambda\|=0
\end{equation*}
and by \eqref{eq:est-PnLl} in \autoref{thm:NC-1} the operator $G$ cannot be the generator of a $C_0$-semigroup. This completes the proof.
\end{proof}

In the compact case 
$X=\Xin=\LpneCm$ we obtain a simple determinant condition for the (semi-)group generation.

\begin{cor}\label{cor:gen-matrix-det}
Let $V_0,V_1\in\rM_m(\CC)$ and $\B\in\sL(\LpneCm,\CC^m)$.
Then the operator
\begin{equation*}
\begin{aligned}
G&=c(\p)\cdot\tdds,\\
D(G)&=\bigl\{f\in\WepneCm:V_0f(0)-V_1f(1)=\B f
\bigr\},
\end{aligned}
\end{equation*}
generates a $C_0$-semigroup on $\LpneCm$ if and only if
\[\det\bigl(V_1q(1) P_+ -  V_0 q(0) P_-\bigr)\ne0,\]
Moreover, $(G,D(G))$ generates a  $C_0$-group if and only if in addition
\[\det\bigl(V_1q(1) P_- -  V_0 q(0) P_+\bigr)\ne0.\]
\end{cor}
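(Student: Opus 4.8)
The plan is to derive both equivalences as specializations of the matrix criterion \autoref{cor:gen-matrix}, applied first to $G$ and then to $-G$.

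For the semigroup statement I would simply set $\ell=0$ in \autoref{cor:gen-matrix}. With no external edges the boundary space is $\dX=\CC^m$, the external data $\Vne,\qe$ vanish, and the operator $\sR_0$ of \eqref{eq:R0-matrix} reduces to the single square block
\[
\sR_0=V_1\,\qi(1)\,\Pip-V_0\,\qi(0)\,\Pim\in\sL(\CC^m),
\]
where $V_0,V_1$ take over the roles of $\Vni,\Vei$, the matrix $q=\qi$, and $\Pip=P_+$, $\Pim=P_-$ are the spectral projections of $c(\p)$. An $m\times m$ matrix is invertible precisely when its determinant is nonzero, so invertibility of $\sR_0$ is exactly $\det\bigl(V_1q(1)P_+-V_0q(0)P_-\bigr)\ne0$. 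Since \autoref{cor:gen-matrix} is a genuine equivalence (its ``only if'' half coming from \autoref{thm:NC-1}) and already accommodates the bounded boundary perturbation $\B$ via \autoref{rem:inv-BP}, this gives the first assertion without further work.

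For the group statement I would use the standard principle, recalled after the proof of \autoref{thm:gen-Lp}, that $(G,D(G))$ generates a $C_0$-group if and only if both $G$ and $-G$ generate $C_0$-semigroups. As the external part is empty, the boundary space $\dX=\CC^m$ and the perturbation $\B$ are the same for $G$ and $-G$, so it remains to run the semigroup criterion once more for $-G$. Its velocity matrix is $-c(\p)$, which by \eqref{eq:rep-a(s)} can be written as $-c=q\cdot(-\lambda)\cdot q^{-1}$ with the \emph{same} similarity $q$; hence $q(0)$ and $q(1)$ are unchanged, while the positive and negative spectral projections merely interchange, $P_+^{-c}=\Pim$ and $P_-^{-c}=\Pip$, because the positive eigenvalues of $-c$ are exactly the negatives of the negative eigenvalues of $c$. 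Feeding these into the determinant condition for $-G$ yields $\det\bigl(V_1q(1)P_- - V_0q(0)P_+\bigr)\ne0$, and combining the two applications shows that $G$ is a group generator iff both determinant conditions hold.

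The one step I expect to require genuine care, rather than mere transcription, is the passage to $-G$: I must verify that $-c$ admits the representation \eqref{eq:rep-a(s)} with the identical factor $q$ (so that $q(0),q(1)$ persist verbatim and only $P_+$ and $P_-$ swap roles) and that \autoref{asu:s-asu-Lp} remains valid for $-\lambda$, which it does since flipping all signs preserves the strict-sign bound \eqref{eq:aek-beschr}. Everything else is a direct specialization of \autoref{cor:gen-matrix}.
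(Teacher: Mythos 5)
Your proposal is correct and matches the paper's intended derivation: the corollary is stated there without a separate proof precisely because it is the $\ell=0$ specialization of \autoref{cor:gen-matrix} (with $V_0,V_1$ in the roles of $\Vni,\Vei$), combined with the observation preceding \autoref{cor:gen-group} that $G$ generates a group iff $\pm G$ both generate semigroups, where passing to $-c=q\cdot(-\lambda)\cdot q^{-1}$ keeps $q$ and swaps $P_+$ with $P_-$. Your added care about \autoref{asu:s-asu-Lp} holding for $-\lambda$ is exactly the right check and it goes through.
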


\begin{rem}
Observe that in the group case one can combine the two determinant conditions to the single condition
\[\det\left(V_1q(1) P_+ q(0)^{\top} V_0^{\top} +  V_0 q(0) P_-q(1)^{\top} V_1^{\top}\right)\ne0.\]
Moreover, since $q(\p)$ is invertible, in the case when all the velocities are of the same sign, the conditions simplify to $\det V_0 \ne 0$, $\det V_1 \ne 0$, and $\det(V_0 V_1^{\top}) \ne 0$, respectively.
\end{rem} 

We conclude the section with a few basic examples. 

\begin{exa} 
Consider a simple transport operator $G\subset \frac{d}{ds}$ on $\rL^p[0,1]$ with  non-local boundary conditions that appear, for example, in the McKendrick--von Foerster equation, cf.~\cite{Has:91}. 
More precisely, for $h\in\rL^q[0,1]$ where $\frac1p+\frac1q=1$, we define the domain
\[D(G):=\left\{f\in\rW^{1,p}\bigl([0,1],\CC\bigr) \,\bigm| \, f(1) = \int_0^1 h(s)f(s) \ds\right\}.\]
In our setting this corresponds to $\ell=0$,  $m=1$, the velocity coefficient $c(\p)\equiv1$, the state space $X= \rL^{p}[0,1]$, the boundary space $\dX=\CC$, the boundary ``matrices'' $V_0=0$, $V_1=-1$, the projections $P_+=1$, $P_-=0$ and the bounded boundary functional $\B\in X'$ given by
$\B f:=\int_0^1 h(s)f(s) \ds$.
By \autoref{cor:gen-matrix-det} this implies 
that $G$ generates a $C_0$-semigroup but not a group on $\rL^p[0,1]$.
\end{exa}

\begin{exa}\label{ex:loop}
Consider a system of transport equations on two closed unit intervals, so $m=2$ and $\ell=0$. More precisely, we take the operator
\begin{equation*}
\begin{aligned}
G&={q(\p)\cdot  \begin{pmatrix} \lambda_1(\p)  & 0 \\ 0 &  \lambda_2 (\p) \end{pmatrix}\cdot q(\p)^{-1} \cdot\tdds, } \quad
D(G)&=\bigl\{f\in \rW^{1,p}\bigl([0,1], \CC^2\bigr) :V_0f(0)=V_1f(1)\bigr\},
\end{aligned}
\end{equation*}
for functions $\lambda_1(\p), \lambda_2(\p)\in\rL^\infty([0,1],\CC)$ of constant sign (cf.~\eqref{eq:aek-beschr}), Lipschitz continuous matrix-valued function $q(\p)$ such that $q(\p),q^{-1}(\p)\in\rL^\infty([0,1],\rM_2(\CC))$, and complex $2\times 2 $ matrices
\begin{equation}\label{eq:loop-bc}
V_0 = \begin{pmatrix} a & b\\ c& d\end{pmatrix}\quad\text{and}\quad V_1 = \begin{pmatrix} \alpha & \beta\\ \gamma& \delta\end{pmatrix}.
\end{equation} 
Let us first examine the simplest case when $q(\p)=Id$.
If $\lambda_1(\p),\lambda_2(\p)$ are both positive (resp.~negative), \autoref{cor:gen-matrix-det} yields the generation of a $C_0$-semigroup  if and only if $\det V_1 \ne 0$ (resp.~$\det V_0 \ne 0$) and the generation of a group if and only if both matrices are invertible. In case $\lambda_1(\p)>0>\lambda_2(\p)$ (resp.~$\lambda_1(\p)<0<\lambda_2(\p)$), the generation condition for the $C_0$-semigroup becomes
\begin{equation}\label{eq:det-ab}
\det\begin{pmatrix} \alpha & -b\\ \gamma & -d\end{pmatrix} \ne 0 \quad (\text{resp.~} \det\begin{pmatrix} \beta & -a\\ \delta & -c\end{pmatrix} \ne 0).
\end{equation}
Moreover, if both determinants are nonzero we obtain a $C_0$-group.

Next, we consider general $q(\p)$. Observe, that in the case when $\lambda_1(\p),\lambda_2(\p)$ are both of the same sign, by invertibility,  $q(\p)$  has  no influence to the generation condition. However, if   $\lambda_1(\p)$ and $\lambda_2(\p)$ have the opposite signs, the entries of $q(0)$ and $q(1)$ may significantly contribute to the invertibility of the matrices in question. To demonstrate this,  take $\lambda_1(\p)>0>\lambda_2(\p)$ and  $V_0=V_1= Id$. For $q(\p)=Id$ both conditions in \eqref{eq:det-ab} are fullfiled and  yield a $C_0$-group. On the other hand, if we take 
\[ q(s):= \begin{pmatrix} 2-s & s-1\\1-s & s \end{pmatrix},\]
which satisfies \autoref{asu:s-asu-Lp}, the matrix $V_1q(1) P_+ -  V_0 q(0) P_-$ is singular. 
\end{exa}

\begin{exa}\label{ex:halfline}
Consider a system of transport equations on two different intervals: the first equations takes place on $\RRp$ and the second one on $[0,1]$. We thus have $\ell=1=m$ and
\begin{equation*}
\begin{aligned}
G&=\begin{pmatrix} \ce(\p) \cdot\tdds & 0 \\ 0 &  \ci (\p) \cdot\tdds\end{pmatrix},\\
D(G)&=\left\{f=\tbinom{\fe}{\fin}\in \rW^{1,p}\bigl(\RRp, \CC\bigr) \times  \rW^{1,p}\bigl([0,1], \CC\bigr):\:\Vne\fe(0)+\Vni\fin(0)=\Vei\fin(1)
\right\},
\end{aligned}
\end{equation*}
for some velocity functions $\ce(\p)\in\rL^\infty(\RRp,\CC)$, $\ci(\p)\in\rL^\infty([0,1],\CC)$ and boundary matrices $\Vne\in M_{\n\times 1}(\CC)$ and $\Vni,\Vei\in M_{\n\times 1}(\CC)$. The size of these matrices depends on the sign of $\ce(\p)$: if  $\ce(\p) >0$ then $\n=1$ and all the matrices are scalars, i.e., we only have one boundary condition. In this case, \autoref{cor:gen-matrix}
characterizes the generation of a $C_0$-semigroup  through the condition $\Vni\ne 0$ if $\ci(\p)<0$, respectively $\Vei\ne 0$ if $\ci(\p)>0$. However, if $\ce(\p) <0$,  the boundary space $\dX$ gets two-dimensional, that is $\n=2$, and the generation condition in case of $\ci(\p)>0$ (resp.~$\ci(\p)<0$) becomes 
\[\det\begin{pmatrix} \alpha & c\\ \beta & d\end{pmatrix} \ne 0 \quad (\text{resp.~} \det\begin{pmatrix} \alpha & -a\\ \beta& -b\end{pmatrix} \ne 0),\]
where $\Vne={\alpha  \choose \beta}$, $\Vni={a \choose b}$ and $\Vei={c \choose d}$.
\end{exa}

\section{Application to Flows in Networks}\label{sec:networks}

In this section we use our abstract results to show the well-posedness of transport equations on networks. We will consider initial-boundary value problems  as in \eqref{eq:acp} where the structure of the network will be encoded in the domain $D(G)$. 

\smallskip
Consider a finite metric graph (network)  with $n$ vertices $\mv_1,\dots, \mv_n$,  $m$ \emph{internal} edges $\mei_1,\dots,\mei_m$, which we parametrize on the unit interval $[0,1]$, and $\ell$ \emph{external} edges $\mee_1,\dots,\mee_{\ell}$, parametrized on the half-line $\RR_+$. A graph without external edges is called \emph{compact}.  
For simplicity, the graph is assumed to be connected and without vertices of degree 1. 

Along the edges of this metric graph some material is transported. This flow process is described by a system of linear transport equations
\begin{equation}\label{eq:net-dif}
\begin{aligned}
\frac{d}{dt}\, u^e_k(t,s) &= \cek(s)\cdot \frac{d}{ds}\, u^e_k(t,s),&& t\ge 0,\ s>0, && k=1,\dots,\ell,\\
\frac{d}{dt}\, u^i_j(t,s) &= \cij(s)\cdot \frac{d}{ds}\, u^i_j(t,s),&& t\ge 0,\  s\in(0,1),&& j=1,\dots,m ,
\end{aligned}
\end{equation}
 for some uniformly bounded functions $\cek(\p) \in\rL^\infty(\RRp,\CC)$, $\cij(\p)\in\rL^\infty([0,1],\CC$. 
  Here,  $u^e_k$, $u^i_j$ represent the flow density and $\cek,\cij$ the flow velocity on the edge $\mee_k, \mei_j$, respectively. 
 In the vertices of the graph the material gets redistributed according to some prescribed rules which translate to boundary conditions of our problem. We shall assume that these conditions respect the network structure, that is only the values at common endpoints can interfere. 
 
While the underlying combinatorial graph is a priori not directed, the parametrization orients the edges of the corresponding metric graph.
The direction of the flow on a given edge is determined by the sign of the appropriate velocity coefficient: a negative coefficient yields a flow  from the endpoint $0$ to the endpoint $1$ on the corresponding edge and the opposite holds in the case of a positive coefficient. For a directed graph we always assume that the direction of the edges corresponds to the direction of the flow. 
Let us start with some simple examples.

\begin{exa}\label{ex:loop-graph}
Reconsider the system presented in \autoref{ex:loop} {(with $q(\p)=Id$)} and imagine now the two intervals glued together at both endpoints,  forming a double edge, so $m=2$ and $\ell = 0$. Here, we have two possibilities for the flow direction, yielding either a sink or a loop, 
as depicted in the first two pictures of \autoref{fig:loop}.  They are obtained by the appropriate combinations of the parametrization  of the intervals and the sign of the velocity coefficients. Up to isomorphism, we may assume that  $\lambda_1(\p),\lambda_2(\p)$ are both negative, and we glue the same endpoints (0-0 and 1-1) in the first case  and   mixed  endpoints  (0-1 and 1-0) in the second case.

\begin{figure}[!h]  
  \begin{subfigure}[b]{0.3\textwidth}
     \centering
      \begin{tikzpicture}[-,auto,node distance=3cm,
  thick,main node/.style={circle,fill=black,draw}]

  \node[main node] (1) at (0,0) {};
  \node[main node] (2) at (2,0) {};

\tikzset{edge/.style = {->,> = latex'}}
 \draw[edge] (1) edge  [bend left]  node [above] {$\me_1$}  (2);
 \draw[edge] (1) edge  [bend right]  node [below] {$\me_2$}  (2); 
\end{tikzpicture}
\caption{Sink}
    \end{subfigure}
      \hfill
 \begin{subfigure}[b]{0.3\textwidth}
    \centering
    \begin{tikzpicture}[-,auto,node distance=3cm,
  thick,main node/.style={circle,fill=black,draw}]

  \node[main node] (1) at (0,0) {};
  \node[main node] (2) at (2,0) {};

\tikzset{edge/.style = {->,> = latex'}}
 \draw[edge] (1) edge  [bend left]  node [above] {$\me_1$}  (2);
 \draw[edge] (2) edge  [bend left]  node {$\me_2$}  (1); 
\end{tikzpicture}
\caption{Loop}
  \end{subfigure}
   \hfill
 \begin{subfigure}[b]{0.3\textwidth}
    \centering
    \begin{tikzpicture}[-,auto,node distance=3cm,
  thick,main node/.style={circle,fill=black,draw}]

  \node[main node] (1) at (0,0) {};
  \node[main node] (2) at (2.5,0) {};

\tikzset{edge/.style = {->,> = latex'}}
 \draw[edge] (1) edge  [bend left]  node [above] {$\me_1$}  (2);
 \draw[edge] (1) edge node {$\me_2$}  (2); 
  \draw[edge] (2) edge  [bend left]  node   {$\me_3$}  (1); 

\end{tikzpicture}
\caption{Pumpkin}
 
    \end{subfigure}
 \caption{Compact graphs from \autoref{ex:loop-graph}. The edges are oriented according to the  direction of the flow.}  
  \label{fig:loop}
\end{figure}
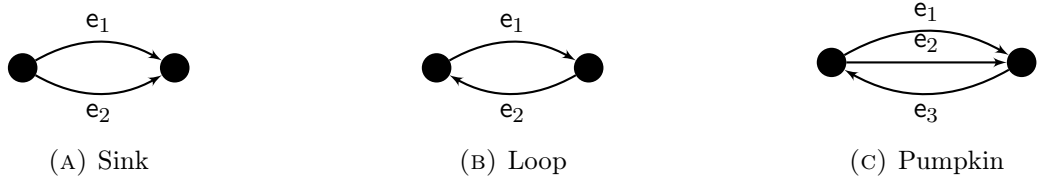

We now only consider boundary conditions that are reflecting the structure of the graph, that is  boundary matrices $V_0, V_1$ from \eqref{eq:loop-bc} should have non-zero entries only if the corresponding endpoints agree. Up to the simultaneous permutation of the rows, 
 we obtain in case of the sink-graph~(A) the following possibilities
 \begin{equation*}
 V_0 = \begin{pmatrix} a & b\\ 0& 0\end{pmatrix},\quad V_1 = \begin{pmatrix} 0 & 0\\ \gamma& \delta\end{pmatrix}
\qquad\text{or}\qquad  V_0 = \begin{pmatrix} a & 0\\ 0& d\end{pmatrix},\quad V_1 = \begin{pmatrix} 0 &0 \\ 0& 0\end{pmatrix}.
\end{equation*} 
Since we assumed negative velocities, \autoref{cor:gen-matrix-det} yields no generation in the first case 
and the generation of a $C_0$-semigroup (but not a group) in the second case if and only if $ad\ne 0$. Note, however, that the second boundary conditions  are degenerated since the second common node (at the endpoints 1) is not taken into account and are usually not considered in the study of transport processes on metric graphs.

In the case of the loop-graph~(B), we have
 \begin{equation*}
 V_0 = \begin{pmatrix} a & 0\\ 0& d\end{pmatrix},\quad V_1 = \begin{pmatrix} 0 &\beta \\ \gamma& 0\end{pmatrix},
\end{equation*} 
which yields the generation of a $C_0$-semigroup  if and only if $ad\ne 0$ and even a $C_0$-group if and only if $ad\beta\gamma\ne 0$. 
\smallskip

Now, let us add another edge, obtaining the so-called pumpkin graph~(C). As before, we assume that   $\lambda_1(\p),\lambda_2(\p),\lambda_3(\p)$ are all negative, and the intervals are parametrized such that the direction of the flow is  from   endpoint $0$ to  endpoint $1$.  
We first take the following coupling conditions for the intervals
\begin{equation*}\label{bc_pumpkin1}
u_1(0) +  u_2(0) =u_3(1)\quad\text{and}\quad  u_3(0)=  u_1(1) +  u_2(1)
\end{equation*}
that incorporate the conservation of the mass in both vertices (i.e., so-called Kirchhoff-type conditions).
These conditions give rise to the boundary matrices 
\[V_0 = \begin{pmatrix} 1 & 1 &0\\ 0& 0&1\\ 0 & 0 & 0\end{pmatrix}\quad\text{and}\quad 
V_1 = \begin{pmatrix} 0 & 0&1 \\1& 1& 0\\ 0 & 0 & 0\end{pmatrix}\]
which yield no generation since $V_0$ is not invertible. This corresponds with the intuition that only 2 boundary conditions for this problem do not suffice. If instead we impose the conditions 
\begin{equation*}\label{bc_pumpkin2}
u_1(0)  = \alpha\cdot u_3(1),\quad   u_2(0) = (1-\alpha)\cdot u_3(1)\quad\text{and}\quad  u_3(0)=  u_1(1) +  u_2(1),
\end{equation*}
for some $\alpha\in (0,1)$, we still have the conservation of mass in the vertices but  the
 boundary matrices now take the form
\[V_0 = \begin{pmatrix} 1 & 0&0\\ 0& 1&0\\ 0 & 0 &1\end{pmatrix}\quad\text{and}\quad 
V_1 = \begin{pmatrix} 0 & 0&\alpha \\0 & 0&1-\alpha \\1& 1& 0\end{pmatrix}\]
and we obtain a well-posed problem governed by a semigroup which is not a group.
\end{exa}

\begin{exa}\label{ex:non-compact}
To obtain simple examples of non-compact graphs we add to the loop-graph from \autoref{ex:loop-graph} two edges of infinite length, parametrized as $\RRp$. For the velocities we consider 3 possibilities,  each yielding a different dimension $\n$ of the respective boundary space $\dX$. They are presented  in \autoref{fig:lasso} together with the corresponding boundary matrices.

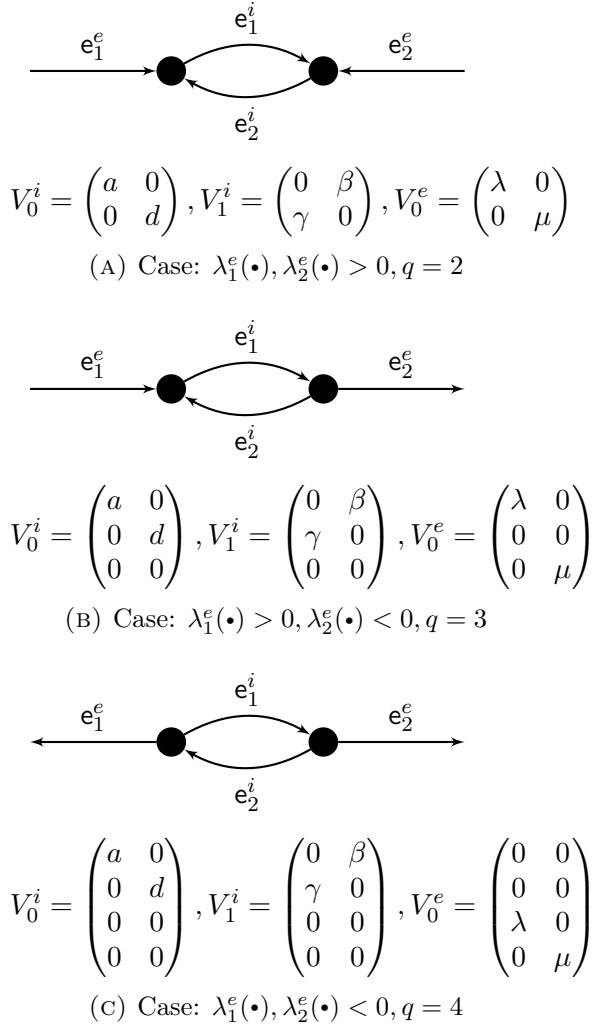
\begin{figure}[!h]  
  \begin{subfigure}[b]{0.45\textwidth}
 \begin{minipage}{.3\linewidth}
    \centering
   \begin{tikzpicture}[-,auto,node distance=3cm,
  thick,main node/.style={circle,fill=black,draw}]

    \node[main node] (1) at (0,0) {};
  \node[main node] (2) at (2,0) {};
  \node (3) at (4,0) {};
  \node (4) at (-2,0){};
  
\tikzset{edge/.style = {->,> = latex'}}
 \draw[edge] (1) edge  [bend left]  node [above] {$\mei_1$}  (2);
 \draw[edge] (2) edge  [bend left]  node {$\mei_2$}  (1); 
 \draw[edge] (3) edge  node [above] {$\mee_2$}  (2); 
  \draw[edge] (4) edge  node [above] {$\mee_1$}  (1); 
\end{tikzpicture}
 \end{minipage}
 \begin{minipage}{.7\linewidth}
  \centering\normalsize
 \[ \Vni = \begin{pmatrix} a &0\\ 0 & d\end{pmatrix}, \Vei =  \begin{pmatrix} 0 &\beta \\ \gamma & 0\end{pmatrix},
  \Vne = \begin{pmatrix} \lambda &0\\0&\mu\end{pmatrix}\]
    \end{minipage}%
\caption{Case: $\lambda^e_1 (\p),\lambda^e_2 (\p)>0, \n=2$}

    \end{subfigure}
     % \hfill
      \bigbreak
  \begin{subfigure}[b]{0.45\textwidth}
   \begin{minipage}{.3\linewidth}
     \centering
      \begin{tikzpicture}[-,auto,node distance=3cm,
  thick,main node/.style={circle,fill=black,draw}]

    \node[main node] (1) at (0,0) {};
  \node[main node] (2) at (2,0) {};
  \node (3) at (4,0) {};
  \node (4) at (-2,0){};  
\tikzset{edge/.style = {->,> = latex'}}
 \draw[edge] (1) edge  [bend left]  node [above] {$\mei_1$}  (2);
 \draw[edge] (2) edge  [bend left]  node {$\mei_2$}  (1); 
 \draw[edge] (2) edge  node [above] {$\mee_2$}  (3); 
  \draw[edge] (4) edge  node [above] {$\mee_1$}  (1); 
\end{tikzpicture}
 \end{minipage}
 \begin{minipage}{.7\linewidth}
  \centering\normalsize
 \[ \Vni = \begin{pmatrix} a &0\\ 0 & d\\0&0\end{pmatrix}, \Vei =  \begin{pmatrix} 0 &\beta \\ \gamma & 0\\0&0\end{pmatrix},
  \Vne = \begin{pmatrix} \lambda &0\\0&0\\0&\mu\end{pmatrix}\]
    \end{minipage}%

\caption{Case: $\lambda^e_1 (\p)>0,\lambda^e_2 (\p)<0, \n=3$}
    \end{subfigure}
%      \hfill
   \bigbreak   
    \begin{subfigure}[b]{0.45\textwidth}
       \begin{minipage}{.3\linewidth}
     \centering
      \begin{tikzpicture}[-,auto,node distance=3cm,
  thick,main node/.style={circle,fill=black,draw}]

    \node[main node] (1) at (0,0) {};
  \node[main node] (2) at (2,0) {};
  \node (3) at (4,0) {};
  \node (4) at (-2,0){};
  
\tikzset{edge/.style = {->,> = latex'}}
 \draw[edge] (1) edge  [bend left]  node [above] {$\mei_1$}  (2);
 \draw[edge] (2) edge  [bend left]  node {$\mei_2$}  (1); 
 \draw[edge] (2) edge  node [above] {$\mee_2$}  (3); 
  \draw[edge] (1) edge  node [above] {$\mee_1$}  (4); 
\end{tikzpicture}
 \end{minipage}
 \begin{minipage}{.7\linewidth}
  \centering\normalsize
 \[ \Vni = \begin{pmatrix} a &0\\ 0 & d\\0&0\\0&0\end{pmatrix}, \Vei =  \begin{pmatrix} 0 &\beta \\ \gamma & 0\\0&0\\0&0\end{pmatrix},
  \Vne = \begin{pmatrix}0&0\\0&0 \\\lambda &0\\0&\mu\end{pmatrix}\]
    \end{minipage}%
\caption{Case: $\lambda^e_1 (\p),\lambda^e_2 (\p)<0, \n=4$}
    \end{subfigure}   
 \caption{Non-compact graphs from \autoref{ex:non-compact}. The edges are oriented according to the  direction of the flow.}  
  \label{fig:lasso}
\end{figure}
By \autoref{cor:gen-matrix}, we obtain a $C_0$-semigroup in every case if and only if $ad \ne 0$, $ad\mu \ne 0$, and $ad\lambda\nu \ne 0$, respectively.
\end{exa}

\subsection{Compact Graphs}

Starting with \cite{KS:05}, there is a series of articles dealing with transport processes of type \eqref{eq:net-dif} on compact graphs, cf.~\cite{MS07, BN:14, BFN:16}.
%Sik:05, EKNS08, BDK:13, Dor08, BDR:12, BP18, BP19
 In all these works 
the semigroup approach is used, which is summarized in \cite{DFKNR:10} and \cite[Ch.~18]{BKFR:17}. Note, that in most of these papers the velocities are taken positive and the flow process is directed on the edges from the endpoint 1 to  the endpoint 0.  Moreover, the ``boundary space'' $\dX$ in these works  relates to the vertices of the graph and equals $\CC^n$ while  we take  $\dX=\CC^m$ that complies with the number of the boundary conditions needed (see the discussion preceding \eqref{eq:def-dX}).

The most natural boundary conditions in this context are \emph{Kirchhoff conditions} expressing the conservation of mass, that is, the sum of the incoming flow  in every vertex equals the sum of the outgoing flow from that vertex. 
These conditions are complemented by certain transmission rules that prescribe the proportions of the mass flowing into each of the outgoing edges.  As observed in \cite[Thm.~2.1]{BN:14}, one needs to restrict to the case without sinks in the graph; that is, we have to assume that from every vertex the material is flowing out on at least one edge. In this case, the boundary conditions appearing in the description of the domain of the generator can be rewritten in the form 
\begin{equation}\label{eq:adj}
f(0) = \mathbb{B} f(1),
\end{equation}
where $\mathbb{B}$ is the so-called (weighted, transposed)  \emph{adjacency matrix of the line graph}, i.e.~an $m\times m $ matrix incorporating the structure of the graph, see \cite[Prop.~18.2]{BKFR:17} for more details.  By \autoref{cor:gen-matrix-det}, the network transport problem \eqref{eq:net-dif} with boundary conditions of the form \eqref{eq:adj} is well-posed with no extra assumptions on the matrix $\BB$. 

Our results cover all situations where the boundary conditions can be  written as in \eqref{eq:adj} which,  apart from simple transport equation with constant coefficients like in \cite{KS:05,BN:14,BFN:16,BKFR:17}, include also non-constant coefficients cases as in \cite{MS07,Ban:16} or transport equations with scattering, see \cite{Rad08,DFKNR:10}. Yet another example of this kind are transmission line networks with non-conservative junction conditions as studied in \cite{Car:11}.  Since every edge has exactly one starting and one ending vertex, the local boundary conditions 
 \cite[(3.3)]{Car:11} can be equivalently written as \eqref{eq:adj} where the entries of $\BB$ correspond to appropriate entries of locally given scattering matrices. Again, no additional assumption on $\BB$ is needed in order to obtain well-posedness. In \cite{Car:11} only balanced graphs are considered (i.e., graphs with the same number of incoming and outgoing edges in each vertex) and  invertibility of the scattering matrices is assumed, however, the focus there is not on well-posedness but on spectral properties of the generator. 

We point out that  \autoref{thm:gen-Lp} and its corollaries for the compact case, i.e.,
\autoref{cor:gen-group} and \autoref{cor:gen-matrix-det}, enable to tackle problems with much more general boundary conditions as studied in the references mentioned above. Moreover, one can apply the spectral theory developed in \cite{AE:18} to characterize the spectrum of the generator in terms of the graph matrix $\BB$, see, e.g.~\autoref{lem:sigmaG}  and \cite[Cor.~18.13]{BKFR:17}.

\subsection{Non-compact Graphs}

To our knowledge, first-order equations on non-compact graphs have not attained much attention in the literature.
In \cite{Exn:13} the action of momentum operator $-\imath \hbar \tdds$  on non-compact metric (or quantum) graphs has been considered in the  $\rL^2$-setting,  the conditions yielding a self-adjoint generator were presented, and its spectral properties studied. Further, \cite{JW:19}  is devoted to port-Hamiltonian systems corresponding to a class of hyperbolic partial differential equations of first order  on the semi-axis  and the main result is the characterization of the boundary conditions yielding $C_0$-semigroup generation. On the other hand, there has been some more work done in the case of  second-order equations, see  \cite{EKF:19} and the references therein.

Let us first show how to write the boundary conditions also in the non-compact case in the form resembling \eqref{eq:adj}. 
Starting with the problem on a compact graph as in \autoref{cor:gen-matrix-det} and assuming that all the velocities are negative, the generation condition becomes the invertibility of  $V_0$. For invertible $V_0$, we  can write the  boundary conditions as
\[V_0 f(0)= V_1 f(1) \iff f(0) = V_0^{-1} V_1 f(1),\]
hence, by taking $\BB := V_0^{-1} V_1$, we obtain \eqref{eq:adj}. We now  proceed similarly in the non-compact case, with the setting as in \autoref{cor:gen-matrix} and taking all the internal velocities to be negative. We can write the boundary conditions in $D(G)$ as 
\[\bigl(\Vne,\Vni\bigr) { \fe(0)\choose \fin(0) } = \Vei\fin(1) \iff 
 { \fe(0)\choose \fin(0) } = \bigl(\Vne,\Vni\bigr) ^{+}  \Vei\fin(1)
\]
where $A^+$ denotes the Moore-Penrose  pseudoinverse of matrix $A$. Note, that the generation condition in this case yields that the matrix 
$ \bigl(\Vne,\Vni\bigr) $ has full rank ($=\n$) and the pseudoinverse is the right inverse of this matrix. Taking $\BB := \bigl(\Vne,\Vni\bigr) ^{+}  \Vei$ we obtain the desired form of the conditions. Thus, by \autoref{cor:gen-matrix} we can cover a wide variety of boundary conditions. 
\begin{exa}\label{ex:non-compact-2}
Take a non-compact graph with 5 finite and 4 infinite edges
as  depicted on \autoref{fig:non-compact-2}. 
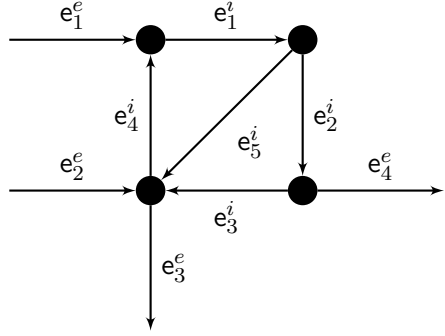
\begin{figure}[!h]  
     \centering
      \begin{tikzpicture}[-,auto,node distance=3cm,
  thick,main node/.style={circle,fill=black,draw}]

\node[main node] (1) at (0,0) {};
\node[main node] (2) at (2,0) {};
\node[main node] (3) at (2,2) {};
\node[main node] (4) at (0,2) {};
\node(5) at (-2,2) {};
\node(6) at (-2,0) {};
\node (7) at (0,-2) {};
\node(8) at (4,0) {};

\tikzset{edge/.style = {->,> = latex'}}
 \draw[edge] (4) edge  node  {$\mei_1$}  (3);
 \draw[edge] (3) edge  node {$\mei_2$}  (2); 
 \draw[edge] (2) edge  node   {$\mei_3$}  (1); 
 \draw[edge] (1) edge  node {$\mei_4$}  (4); 
 \draw[edge] (3) edge  node {$\mei_5$}  (1); 
 \draw[edge] (5) edge  node  {$\mee_1$}  (4); 
  \draw[edge] (6) edge  node  {$\mee_2$}  (1); 
 \draw[edge] (1) edge  node  {$\mee_3$}  (7); 
 \draw[edge] (2) edge  node  {$\mee_4$}  (8); 
\end{tikzpicture}
\caption{Non-compact graph from \autoref{ex:non-compact-2}. The edges are oriented according to the  direction of the flow.}  
  \label{fig:non-compact-2}
 \end{figure}
We assume that all internal velocities $\lambda^i_k(\p) <0$, $k=1,\ldots,5$. For the external part we assume
\[\lambda^e_1(\p)=\lambda^e_2(\p) >0, \quad \lambda^e_3(\p)=\lambda^e_4(\p) <0,\]
hence $\Pip=0$, $\Pim = Id$ and $\rank \Pem = 2$, so $\n=7$.
We equip the system with the following coupling conditions in the common vertices
\begin{alignat*}{3}
u_2^i(0) &= \alpha_1 u_1^i(1),  &\qquad   u_1^i(0)   &= u_4^i(1) + u_1^e(0),\\
u_5^i(0) &= \alpha_2 u_1^i(1),  &\qquad  u_4^i(0) &= \gamma_1 u_5^i(1) +\delta_1 u_3^i(1) + \varepsilon_1 u_2^e(0),\\
u_3^i(0) &= \beta_1 u_2^i(1), &\qquad   u_3^e(0) &= \gamma_2 u_5^i(1) +\delta_2 u_3^i(1) + \varepsilon_2 u_2^e(0),\\
u_4^e(0) &= \beta_2 u_2^i(1)  &&  
\end{alignat*}
for some coefficients $\alpha_1,\alpha_2,\beta_1,\beta_2,\gamma_1,\gamma_2, \delta_1,\delta_2, \varepsilon_1, \varepsilon_2 >0$. The corresponding boundary matrices in this situation are
\[ 
\Vni=  \begin{pmatrix} 1 & 0&0&0&0\\  0&1&0&0&0\\  0&0&1&0&0\\  0&0&0&1&0\\  0&0&0&0&1\\  0&0&0&0&0\\  0& 0&0&0&0\end{pmatrix},
\qquad 
\Vei=  \begin{pmatrix} 0&0&0&1&0\\  \alpha_1 &0&0&0&0\\  0&\beta_1&0&0&0\\  0&0&\gamma_1&0&\delta_1\\  \alpha_2&0&0&0&0\\  0&0&\gamma_2&0&\delta_2\\  0& \beta_2&0&0&0\end{pmatrix},\qquad
\Vne=  \begin{pmatrix} 1 & 0&0&0\\  0&0&0&0\\  0&0&0&0\\  0&\varepsilon_1 &0&0\\  0&0&0&0\\  0&\varepsilon_2&1&0&\\  0&0&0&1\end{pmatrix}.
\]
Observe that the matrix 
\[\sR_0=\bigl(\Vne,\Vei\Pip-\Vni\Pim\bigr) = \bigl(\Vne,\Vni\bigr) \]
from \eqref{eq:R0-matrix} is of full rank, hence we obtain the generation of a $C_0$-semigroup.
\end{exa}

\begin{exa}\label{ex:star} The non-compact graphs considered in the literature are usually assumed to be star-shaped, that is without the compact part. 
Let us thus consider transport equations on a star-shaped network  with  a single vertex and $\ell$ infinite edges. We assume the material is transported into the common node along $\ell-q$ edges while it flows out of it along  $q$ edges.   We orient and enumerate the edges according to the  flow direction:
\[\lambda^e_i <0, \;   i = 1,\dots, q \quad \text{and} \quad \lambda^e_j>0, \; j =  q+1, \dots, \ell, \]
as  depicted in \autoref{fig:star}. 
\begin{figure}[!h]  
     \centering
      \begin{tikzpicture}[-,auto,node distance=3cm,
  thick,main node/.style={circle,fill=black,draw}]

  \node[main node] (1) at (0,0) {};
  \node(2) at (-3,2.1) {};
  \node(3) at (-3.5,1) {};
  \node (4) at (-3,-1.7) {};
  \node(5) at (3,2.1) {};
  \node(6) at (3,-1.7) {};
  \node(7) at (3.5,1) {};

\tikzset{edge/.style = {->,> = latex'}}
 \draw[edge] (2) edge  node  {$\mee_{q+1}$}  (1);
 \draw[edge] (3) edge  node  [anchor=south] {$\mee_{q+2}$}  (1); 
 \draw[edge] (4) edge  node  [anchor=north]   {$\mee_{\ell}$}  (1); 
 \draw[edge] (1) edge  node {$\mee_{q}$}  (5); 
  \draw[edge] (1) edge  node [anchor=north] {$\mee_{1}$}  (6); 
   \draw[edge] (1) edge  node [anchor=south]{$\mee_{q-1}$}  (7);  
  \foreach \y in {20,15,10,5,0,-5,-10}{
\draw[fill] (\y:-2) circle (.5pt);};
  \foreach \x in {10,5,0,-5,-10,-15,-20}{
\draw[fill] (\x:2) circle (.5pt);} 
\end{tikzpicture}
 \caption{Star-shaped non-compact graph from \autoref{ex:star}. The edges are oriented according to the  direction of the flow.}  
  \label{fig:star}
 \end{figure}
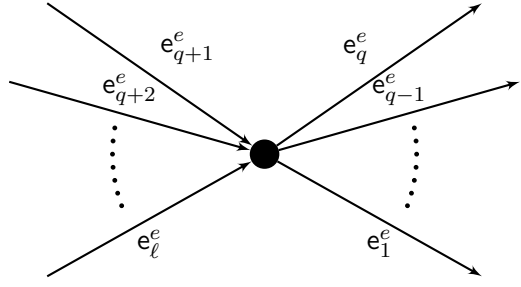
 In the common vertex we impose the boundary conditions for the outgoing flows in the form
\[ u_i^e(t,0) = \sum_{j=q+1}^{\ell} \alpha_{ij} u_j^e(t,0),\quad i =  1, \dots, q. \]
Since for any  choice of the coefficients  $\alpha_{ij}\in\RR$ we have $\rank(\Pem) = q = \rank\Vne$,  \autoref{cor:gen-matrix} yields the generation of a $C_0$-semigroup.
This result generalizes \cite[Ex.~6.2]{JW:19}.
\end{exa}
\goodbreak
\appendix
\label{app:dp}

\section{Domain Perturbation for Generators of $C_0$-Semigroups}
\label{sec:dom-pert}

In this appendix we first briefly recall a perturbation result from \cite[App.~A]{EKF:19} which easily follows 
from \cite[Sect.~4.3]{ABE:13} and is our main tool to prove \autoref{thm:gen-Lp}. 
Further, in \autoref{thm:NC-gen} we supplement this theory with a new result which is needed to obtain the necessary condition in \autoref{thm:NC-1}.
\smallskip

To explain the general setup we consider
\begin{itemize}
\item two Banach spaces $X$ and $\partial X$, called ``state'' and ``boundary'' spaces, respectively;
\item a closed, densely defined ``maximal'' operator\footnote{``maximal'' concerns the size of the domain, e.g., a differential operator without boundary conditions.} $A_m:D(A_m)\subseteq X\to X$;
\item the Banach space $[D(A_m)]:=(D(A_m),\|\p\|_{A_m})$ where $\|f\|_{A_m}:=\|f\|+\|A_mf\|$ is the graph norm;
\item two ``boundary'' operators $L,\Phi\in\sL([D(A_m)],\partial X)$.
\end{itemize}
Then,  define two restrictions $A,\,G\subset A_m$ by
\begin{align*}
D(A):&=\bigl\{f\in D(A_m):Lf=0\bigr\}=\ker(L),\\
D(G):&=\bigl\{f\in D(A_m): \Phi f=0 \bigr\}=\ker(\Phi).
\end{align*}

Hence, one can consider $G$ with boundary condition $\Phi f=0$ as a perturbation of the operator $A$ with abstract ``Dirichlet type'' boundary condition $Lf=0$.
In order to proceed we make throughout this appendix the following 

\begin{asu}\label{asu:BP}
\makeatletter
\hyper@anchor{\@currentHref}%
\makeatother
\begin{enumerate}[(i)]
\item The operator $A$ generates a $C_0$-semigroup $\Tt$ on $X$;
\item the boundary operators $L,\Phi:D(A_m)\to\partial X$ are surjective.
\end{enumerate}
\end{asu}

By combining \cite[Cor.~A.5]{EKF:19} and \cite[Lem.~A.6]{EKF:19} we obtain the following.

\begin{thm}\label{thm:pert-bc-v3}
Assume that there exist $1\le p<+\infty$, $\tn>0$, $M\ge0$,  and a strongly continuous family $\Bttn\subset\sL(\rL^p([0,\tn],\dX),X)$  such that the following assertions hold.
\begin{enumerate}[(i)]
\item For every $v\in\rW^{2,p}_0([0,\tn],\dX)$ the function
\begin{equation*}
x\colon [0,\tn]\to X,\quad x(t):=\sBt v
\end{equation*}
is a classical solution of the boundary control problem
\begin{equation*}
\begin{cases}
\dot x(t)=A_m x(t),&0\le t\le\tn,\\
L x(t)=v(t),&0\le t\le\tn,\\
x(0)=0.
\end{cases}
\end{equation*}
\item  For all $x\in D(A),$
\[\int_0^{t_0}\bigl\|\Phi\,T(s)x\bigr\|_\dX^p\ds \leq M\cdot\|x\|_X^p.\]
\item For all $v\in\rW_0^{2,p}([0,t_0],\dX)$,
\[\int_0^{\tn}\bigl\|\Phi \sBt v\bigr\|_\dX^p\dt\le M\cdot\|v\|_p^p.\]
\item $\sQ_{\tn}$  is invertible, where $\sQ_{\tn}\in\sL\bigl(\rL^p([0,t_0],\dX)\bigr)$ is given by 
\begin{equation*} %\label{sRt}
(\sQ_{\tn} v)(\p)=\Phi {\mathcal B}_{\p} v
\quad\text{for all }v\in\rW_0^{2,p}([0,t_0],\dX).
\end{equation*}
\end{enumerate}
Then $G\subset A_m$, $D(G):=\ker(\Phi)$ generates a $C_0$-semigroup on the Banach space $X$.
\end{thm}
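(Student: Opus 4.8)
The plan is to read \autoref{thm:pert-bc-v3} as a well-posedness statement for the \emph{closed loop} of a boundary control system, and to build the semigroup generated by $G$ by a \emph{feedback} argument. Here $A_m$ with boundary input through $L$ is the open-loop system, the strongly continuous family $\Bttn$ is its input map (hypothesis~(i) says exactly that $\sBt u$ solves $\dot x=A_m x$, $Lx=u$ with zero initial state), and $\Phi$ plays the role of an observation operator whose admissibility relative to both $\Tt$ and $\Bttn$ is precisely hypotheses~(ii) and~(iii). The operator $\sQ_{\tn}$ is then the input–output map of this system, and restricting $A_m$ to $D(G)=\ker(\Phi)$ amounts to closing the loop by forcing the observation $\Phi x$ to vanish.

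Concretely, for $x_0\in X$ I would look for a trajectory
\[
x(t)=T(t)x_0+\sBt u,\qquad t\in[0,\tn],
\]
with the whole input signal $u\in\rL^p([0,\tn],\dX)$ still to be determined. By~(i) this automatically solves $\dot x=A_m x$ with $x(0)=x_0$ and $Lx(t)=u(t)$, so it only remains to enforce $x(t)\in D(G)$, i.e.\ $\Phi x(t)\equiv0$. Applying $\Phi$ and using the definition of $\sQ_{\tn}$ in~(iv) turns this requirement into
\[
\sQ_{\tn}u=-\,\Phi\,T(\p)x_0,
\]
whose right-hand side is a bona fide element of $\rL^p([0,\tn],\dX)$: the admissibility estimate~(ii) lets $x_0\mapsto\Phi T(\p)x_0$ extend from the dense set $D(A)$ to a bounded map on $X$. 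Since $\sQ_{\tn}$ is invertible by~(iv), I can set $u:=-\sQ_{\tn}^{-1}\bigl(\Phi T(\p)x_0\bigr)$ and then $S(t)x_0:=T(t)x_0+\sBt u$. On sufficiently regular $x_0\in D(G)$ the estimate~(iii) controls $\sBt u$ and one checks that $x(t)$ is a classical solution of $\dot x=A_m x$, $\Phi x\equiv0$, $x(0)=x_0$, hence of the Cauchy problem for $G$.

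The remaining work is to verify that $\bigl(S(t)\bigr)_{t\ge0}$ is a $C_0$-semigroup whose generator is exactly $G$. Boundedness of each $S(t)$ on $X$ and strong continuity follow from those of $\Tt$ and $\Bttn$ together with boundedness of $\sQ_{\tn}^{-1}$. The algebraic law $S(t+s)=S(t)S(s)$, and the extension of $S$ beyond $[0,\tn]$, rest on the \emph{causality} and time-translation invariance of $\Bttn$ and of the input–output map $\sQ_{\tn}$: these are what guarantee that $\sQ_{\tn}^{-1}$ respects restriction and concatenation of input signals, so that the feedback input computed on $[0,s]$ prolongs consistently. Finally, differentiating the construction at $t=0$ on regular data shows that the generator $\tilde G$ of $\bigl(S(t)\bigr)$ satisfies $\tilde G\supseteq A_m|_{\ker(\Phi)}$ restricted to a core, after which a standard uniqueness/core argument identifies $\tilde G=G$.

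I expect the main obstacle to be precisely this cocycle property combined with the low regularity of the feedback input: $u=-\sQ_{\tn}^{-1}\bigl(\Phi T(\p)x_0\bigr)$ lies only in $\rL^p$, not in $\rW^{2,p}_0$, so all the identities above must be read through the bounded extensions furnished by~(ii)--(iii), and one has to confirm that the resulting trajectory genuinely takes values in $\ker(\Phi)$ and glues correctly across successive time intervals. Rather than reprove this in full, I would verify that the present data $(A_m,L,\Phi,\Bttn,\sQ_{\tn})$ meet the hypotheses of the abstract domain-perturbation machinery of \cite[Sect.~4.3]{ABE:13}, in the form packaged as \cite[Cor.~A.5, Lem.~A.6]{EKF:19}, whose conclusion is exactly that $G=A_m|_{\ker(\Phi)}$ generates a $C_0$-semigroup on $X$ under \autoref{asu:BP} and (i)--(iv).
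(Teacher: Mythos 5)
Your proposal is correct and ends up exactly where the paper does: the paper offers no independent proof of \autoref{thm:pert-bc-v3} but simply states that it follows by combining \cite[Cor.~A.5]{EKF:19} and \cite[Lem.~A.6]{EKF:19} (which in turn rest on \cite[Sect.~4.3]{ABE:13}), and your final reduction invokes precisely these results. Your preliminary sketch of the feedback construction --- $x(t)=T(t)x_0+\sBt u$ with $u=-\sQ_{\tn}^{-1}\bigl(\Phi T(\p)x_0\bigr)$, extended via the admissibility estimates (ii)--(iii) and glued by causality --- is a faithful account of the mechanism behind those cited results, so it adds correct context rather than diverging from the paper.
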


\begin{rem}\label{rem:C-admiss}
If assumption~(ii) in \autoref{thm:pert-bc-v3} is satisfied, then the operator $\Phi$ is called \emph{$p$-admissible} for $\Tt$. 
\end{rem}

For more details and examples regarding boundary perturbation results of the above type we refer to \cite{ABE:13, ABE:15,BE:13}.

While the previous result provides a sufficient condition for $G\subset A_m$ with domain $D(G)=\ker\Phi$ to be the generator of a $C_0$-semigroup our next aim is to give a necessary one. To this end we first need the following result from in \cite[Lem.~1.2]{Gre:87}.

\begin{lem}\label{lem:Ll}
 For each $\lambda\in\rho(A)$ the restricted operator $L|_{\ker(\lambda-A_m)}$ is invertible and
$L_\lambda:=(L|_{\ker(\lambda-A_m)})^{-1}:\partial X\to\ker(\lambda-A_m)\subseteq X$
is bounded.
\end{lem}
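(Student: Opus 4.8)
The plan is to fix $\lambda\in\rho(A)$ --- which is nonempty since by \autoref{asu:BP}(i) the operator $A$ generates a $C_0$-semigroup --- and to verify successively that $L$ restricted to $\ker(\lambda-A_m)$ is injective, surjective onto $\partial X$, and has bounded inverse. The backbone of the argument is the direct sum decomposition
\[
D(A_m)=D(A)\oplus\ker(\lambda-A_m),
\]
which I obtain as a byproduct of the injectivity and surjectivity computations below and then exploit to define $L_\lambda$.

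For injectivity, suppose $f\in\ker(\lambda-A_m)$ with $Lf=0$. Then $f\in\ker(L)=D(A)$, and since $A_m$ and $A$ agree on $D(A)$ we get $(\lambda-A)f=(\lambda-A_m)f=0$; as $\lambda\in\rho(A)$ this forces $f=0$. For surjectivity, given $y\in\partial X$ I use the surjectivity of $L$ from \autoref{asu:BP}(ii) to pick $g\in D(A_m)$ with $Lg=y$, set $h:=(\lambda-A)^{-1}(\lambda-A_m)g\in D(A)$, and put $f:=g-h$. Then $(\lambda-A_m)f=(\lambda-A_m)g-(\lambda-A)h=0$, so $f\in\ker(\lambda-A_m)$, while $Lh=0$ gives $Lf=Lg=y$. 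The very same computation shows that an arbitrary $g\in D(A_m)$ splits as $g=h+f$ with $h\in D(A)$ and $f\in\ker(\lambda-A_m)$, and the injectivity step shows $D(A)\cap\ker(\lambda-A_m)=\{0\}$; together these give the decomposition displayed above, so $L|_{\ker(\lambda-A_m)}$ is a bijection onto $\partial X$ and $L_\lambda$ is well-defined.

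It remains to prove that $L_\lambda$ is bounded, and here I would invoke the closed graph theorem. On $\ker(\lambda-A_m)$ the graph norm collapses to $\|f\|_{A_m}=(1+|\lambda|)\,\|f\|$, so boundedness into $X$ and into $[D(A_m)]$ are equivalent, and it suffices to show that $L_\lambda\colon\partial X\to X$ has closed graph. If $y_n\to y$ in $\partial X$ and $f_n:=L_\lambda y_n\to z$ in $X$, then $A_mf_n=\lambda f_n\to\lambda z$; closedness of $A_m$ yields $z\in\ker(\lambda-A_m)$ and, by the graph-norm identity above, $f_n\to z$ in $[D(A_m)]$, whence continuity of $L$ on $[D(A_m)]$ gives $y=\lim Lf_n=Lz$, i.e.\ $z=L_\lambda y$. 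The main (and only mildly delicate) point is precisely this last step: one must upgrade $X$-convergence to graph-norm convergence, which is exactly where the closedness of $A_m$ and the eigenrelation $A_mf_n=\lambda f_n$ combine: the remaining injectivity and surjectivity bookkeeping is routine once the resolvent $(\lambda-A)^{-1}$ is available.
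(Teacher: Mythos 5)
Your proof is correct and complete: injectivity from $\lambda\in\rho(A)$, surjectivity via the splitting $g=h+f$ with $h=R(\lambda,A)(\lambda-A_m)g$, and boundedness via the closed graph theorem together with the observation that the graph norm of $A_m$ is equivalent to the $X$-norm on $\ker(\lambda-A_m)$. The paper itself gives no proof but only cites Greiner's Lemma~1.2, and your argument is precisely the standard one from that source, so there is nothing to compare beyond noting that they coincide.
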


Using these so-called abstract Dirichlet operators $L_\lambda\in\sL(\dX,X)$ one can characterize the spectrum of $G$, cf.\  \cite[Cor.~3.13]{AE:18}. Note that $L_\lambda\in\sL(\dX,[D(A_m)])$, hence $\Phi L_\lambda:\dX\to\dX$ is bounded for every $\lambda\in\rho(A)$.

\begin{lem}\label{lem:sigmaG}
For every $\lambda\in\rho(A)$ one has
\[
\lambda\in\rho(G)\quad\iff\quad \Phi L_\lambda\in\sL(\dX)\text{ is invertible}
\]
\end{lem}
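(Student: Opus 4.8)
The plan is to reduce the resolvent equation for $G$ to an equation in the boundary space $\dX$ governed by $\Phi L_\lambda$, exploiting the splitting of the maximal domain induced by $\lambda\in\rho(A)$. First I would record that for $\lambda\in\rho(A)$ one has the direct sum decomposition
\[
D(A_m)=D(A)\oplus\ker(\lambda-A_m),
\]
with projections given explicitly by $f\mapsto(\lambda-A)^{-1}(\lambda-A_m)f$ onto $D(A)=\ker(L)$ and $f\mapsto L_\lambda Lf$ onto $\ker(\lambda-A_m)$. This rests only on \autoref{lem:Ll} (so that $L_\lambda$ is well-defined with $LL_\lambda=\Id_{\dX}$) together with the fact that $(\lambda-A)^{-1}$ exists and maps into $D(A)$; the verification that the two maps are complementary projections is a short computation using $(\lambda-A_m)|_{D(A)}=(\lambda-A)$ and $L|_{D(A)}=0$.

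For the implication ``$\Phi L_\lambda$ invertible $\Rightarrow\lambda\in\rho(G)$'' I would solve $(\lambda-G)f=g$ for arbitrary $g\in X$. Since $(\lambda-A_m)f=g$ has the general solution $f=(\lambda-A)^{-1}g+L_\lambda x$ with $x\in\dX$ free, the boundary condition $\Phi f=0$ becomes $\Phi L_\lambda x=-\Phi(\lambda-A)^{-1}g$. Invertibility of $\Phi L_\lambda$ yields a unique such $x$, hence
\[
(\lambda-G)^{-1}g=(\lambda-A)^{-1}g-L_\lambda(\Phi L_\lambda)^{-1}\Phi(\lambda-A)^{-1}g,
\]
which is a bounded operator; as $G$ is closed (being the restriction of the closed operator $A_m$ to the kernel of the $[D(A_m)]$-continuous map $\Phi$), this gives $\lambda\in\rho(G)$.

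For the converse I would show that $\lambda\in\rho(G)$ forces $\Phi L_\lambda$ to be bijective. Injectivity is immediate: if $\Phi L_\lambda x=0$ then $L_\lambda x\in\ker(\lambda-A_m)\cap\ker(\Phi)=\ker(\lambda-G)=\{0\}$, whence $x=LL_\lambda x=0$. For surjectivity, given $y\in\dX$ I would use surjectivity of $\Phi$ (\autoref{asu:BP}\,(ii)) to pick $f\in D(A_m)$ with $\Phi f=y$, set $g:=(\lambda-A_m)f$, and decompose $f=(\lambda-A)^{-1}g+L_\lambda Lf$; applying $\Phi$ gives $y=\Phi(\lambda-A)^{-1}g+\Phi L_\lambda Lf$. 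Solving $(\lambda-G)\tilde f=g$ for $\tilde f\in D(G)$ shows $\tilde f-(\lambda-A)^{-1}g\in\ker(\lambda-A_m)$, i.e.\ equals $L_\lambda L\tilde f$, so that $\Phi\tilde f=0$ yields $\Phi(\lambda-A)^{-1}g=-\Phi L_\lambda L\tilde f\in\rg(\Phi L_\lambda)$; therefore $y\in\rg(\Phi L_\lambda)$.

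The main obstacle I anticipate is precisely this surjectivity step of the converse: it is here that one must feed in $\lambda\in\rho(G)$ (to produce $\tilde f$) and the surjectivity of $\Phi$, rather than merely the algebra of the domain decomposition. Once bijectivity of $\Phi L_\lambda$ is established, boundedness of its inverse is automatic by the open mapping theorem (and in any case trivial here, since $\dX$ is finite-dimensional), so no further estimate is required to conclude.
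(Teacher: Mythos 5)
Your proof is correct. The paper does not prove this lemma itself but only cites \cite[Cor.~3.13]{AE:18}; your argument---splitting $D(A_m)=D(A)\oplus\ker(\lambda-A_m)$ via the Dirichlet operator $L_\lambda$, reducing $(\lambda-G)f=g$ to the boundary equation $\Phi L_\lambda x=-\Phi(\lambda-A)^{-1}g$ for the forward direction, and using $\ker(\lambda-G)=\{0\}$ together with the surjectivity of $\Phi$ from \autoref{asu:BP} for the converse---is precisely the standard argument (going back to Greiner \cite{Gre:87}) that underlies the cited reference, including the resolvent formula $R(\lambda,G)=(\lambda-A)^{-1}-L_\lambda(\Phi L_\lambda)^{-1}\Phi(\lambda-A)^{-1}$ which the paper itself uses in the proof of \autoref{thm:NC-gen}.
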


Since for  $\lambda\in\rho(A)$ the norms $\|\cdot\|_{[D(A_m)]}$ and $\|\cdot\|_X$ restricted to $\rg(L_\lambda)=\ker(\lambda-A_m)$ are equivalent, the restriction $L:\bigl(\ker(\lambda-A_m),\|\cdot\|_X\bigr)\to\dX$ is bounded. Hence, 
\begin{equation}\label{eq:def-nLl}
\|L\|_\lambda:=\|L\|_{\sL((\ker(\lambda-A_m),\|\cdot\|_X),\dX)}<+\infty
\end{equation}
is well-defined.
Now we are well-prepared to give a necessary condition for $G$ to be a generator on $X$.

\begin{thm}\label{thm:NC-gen}
Suppose that $G\subset A_m$ with domain $D(G)=\ker(\Phi)$ is a generator of a $C_0$-semigroup where
$\Phi=\Phi_0+V\cdot L$ for $\Phi_0\in\sL([D(A_m)],\partial X)$ and a non-invertible operator $V\in\sL(\dX)$. If  $\dim(\dX)<\infty$ then
\begin{equation}\label{eq:NC-Gen}
\lim_{\lambda\to+\infty}\|L\|_\lambda\cdot\|\Phi_0L_\lambda\|=+\infty.
\end{equation}
\end{thm}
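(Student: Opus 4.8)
The plan is to derive the divergence \eqref{eq:NC-Gen} from the generator property of $G$ in three stages: an elementary lower bound for $\|\Phi_0 L_\lambda\|$ exploiting the non-invertibility of $V$, an upper estimate showing $\|(\Phi L_\lambda)^{-1}\|=o(\|L\|_\lambda)$, and a combination of the two. Throughout I would restrict to $\lambda>0$ so large that $\lambda\in\rho(A)\cap\rho(G)$, which is legitimate since $A$ and $G$ both generate $C_0$-semigroups; then $L_\lambda$ is well defined by \autoref{lem:Ll} and $\Phi L_\lambda\in\sL(\dX)$ is invertible by \autoref{lem:sigmaG}.

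First I would record the lower bound. As $\dim\dX<\infty$ and $V$ is not invertible, $\ker V\neq\{0\}$; choosing $w_0\in\ker V$ with $\|w_0\|=1$ and using $L L_\lambda=\Id_{\dX}$ gives $\Phi L_\lambda w_0=\Phi_0 L_\lambda w_0+Vw_0=\Phi_0 L_\lambda w_0$, hence
\[
\|\Phi_0 L_\lambda\|\ge\|\Phi L_\lambda w_0\|\ge\frac1{\|(\Phi L_\lambda)^{-1}\|}.
\]
Consequently $\|L\|_\lambda\cdot\|\Phi_0 L_\lambda\|\ge\|L\|_\lambda/\|(\Phi L_\lambda)^{-1}\|$, so the whole theorem reduces to proving that $\|(\Phi L_\lambda)^{-1}\|/\|L\|_\lambda\to0$ as $\lambda\to+\infty$.

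The decisive step is to show $\|L_\lambda(\Phi L_\lambda)^{-1}\|_{\sL(\dX,X)}\to0$. For this I would use the representation, valid for every $g\in D(A_m)$,
\[
(\lambda-G)^{-1}(\lambda-A_m)g=g-L_\lambda(\Phi L_\lambda)^{-1}\Phi g,
\]
which follows because $h:=g-L_\lambda(\Phi L_\lambda)^{-1}\Phi g$ satisfies $\Phi h=0$, so $h\in D(G)$, while $L_\lambda(\Phi L_\lambda)^{-1}\Phi g\in\ker(\lambda-A_m)$ gives $(\lambda-A_m)h=(\lambda-A_m)g$. Fixing $w\in\dX$ and taking $g:=Ew$ for a bounded right inverse $E\in\sL(\dX,[D(A_m)])$ of the surjection $\Phi$ (which exists as $\dX$ is finite dimensional), the vector $g$ is independent of $\lambda$ and $\Phi g=w$, so
\[
L_\lambda(\Phi L_\lambda)^{-1}w=g-\lambda(\lambda-G)^{-1}g+(\lambda-G)^{-1}A_m g.
\]
Letting $\lambda\to+\infty$, the strong convergence $\lambda(\lambda-G)^{-1}g\to g$ (available for any $g\in X$ precisely because $G$ generates a $C_0$-semigroup) together with $\|(\lambda-G)^{-1}\|\to0$ forces $L_\lambda(\Phi L_\lambda)^{-1}w\to0$. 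Since this map is linear in $w$ and $\dX$ is finite dimensional, the convergence is uniform on the unit sphere, giving $\|L_\lambda(\Phi L_\lambda)^{-1}\|\to0$.

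To conclude I would use that, for every $u\in\dX$, $\|u\|=\|L L_\lambda u\|\le\|L\|_\lambda\,\|L_\lambda u\|$ by the definition of $\|L\|_\lambda$ in \eqref{eq:def-nLl}. Applying this to $u=(\Phi L_\lambda)^{-1}w_\lambda$, where $w_\lambda$ is a unit vector attaining $\|(\Phi L_\lambda)^{-1}\|$, yields
\[
\frac{\|(\Phi L_\lambda)^{-1}\|}{\|L\|_\lambda}\le\|L_\lambda(\Phi L_\lambda)^{-1}w_\lambda\|\le\|L_\lambda(\Phi L_\lambda)^{-1}\|\longrightarrow0,
\]
which is the required reduction. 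The step I expect to be the main obstacle is exactly this upgrade from $O(\|L\|_\lambda)$ to $o(\|L\|_\lambda)$: combining the first lower bound merely with the Hille--Yosida estimate $\|(\lambda-G)^{-1}\|\le M/(\lambda-\omega)$ only shows that the product $\|L\|_\lambda\cdot\|\Phi_0 L_\lambda\|$ stays bounded away from $0$, not that it diverges. What rescues the argument is applying the strong Yosida-type convergence $\lambda(\lambda-G)^{-1}g\to g$ to the \emph{fixed} lift $g=Ew$, which is what produces the genuine $o(\|L\|_\lambda)$ decay and hence the divergence to $+\infty$.
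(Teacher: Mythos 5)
Your proposal is correct and follows essentially the same route as the paper: both arguments hinge on showing $\|L_\lambda(\Phi L_\lambda)^{-1}\|\to0$ from the strong convergence $\lambda R(\lambda,G)g\to g$ applied to a $\lambda$-independent element (you use a fixed lift $Ew$ with $\Phi Ew=w$, the paper uses $g=L_\mu x$ for fixed $\mu$, which turns your identity $(\lambda-G)^{-1}(\lambda-A_m)g=g-L_\lambda(\Phi L_\lambda)^{-1}\Phi g$ into its resolvent formula for $R(\lambda,G)L_\mu$), and both extract the lower bound $\|\Phi_0L_\lambda\|\ge\|(\Phi L_\lambda)^{-1}\|^{-1}$ from the non-invertibility of $V=\Phi L_\lambda-\Phi_0L_\lambda$, you via a unit vector in $\ker V$, the paper via the factorization $V=\Phi L_\lambda(\Id-(\Phi L_\lambda)^{-1}\Phi_0L_\lambda)$. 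The final chaining through $\|u\|\le\|L\|_\lambda\|L_\lambda u\|$ is identical to the paper's estimate $\|(\Phi L_\lambda)^{-1}\|\le\|L\|_\lambda\|L_\lambda(\Phi L_\lambda)^{-1}\|$.
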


\begin{proof} Let $\lambda,\mu\in\rho(A)\cap\rho(G)$. Then, by \autoref{lem:Ll}, the Dirichlet operators $L_\lambda$, $L_\mu$ exist and by \autoref{lem:sigmaG} the operator $\Phi L_\lambda\in\sL(\dX)$ is invertible. A simple computation then shows that 
\[
R(\lambda,G)L_\mu =\tfrac1{\lambda-\mu}\bigl(L_\mu-L_\lambda(\Phi L_\lambda)^{-1}\Phi L_\mu \bigr).
\]
Using that $\lambda R(\lambda,G)f\to f$ as $\lambda\to+\infty$ for every $f\in X$ we conclude that for all $x\in\dX$
\[
\lim_{\lambda\to+\infty}L_\lambda(\Phi L_\lambda)^{-1}\Phi L_\mu x=0.
\]

Since $\Phi L_\mu\in\sL(\dX)$ is invertible and the unit ball in $\dX$ is compact this implies
\begin{equation}\label{eq:limLlPLli}
\lim_{\lambda\to+\infty}\bigl\|L_\lambda(\Phi L_\lambda)^{-1}\bigr\|=0.
\end{equation}
Next, using that $LL_\lambda=\Id_{\dX}$ from the decomposition $\Phi=\Phi_0+V\cdot L$ we obtain
\[
V=\Phi L_\lambda-\Phi_0L_\lambda=\Phi L_\lambda\cdot\bigl(\Id-(\Phi L_\lambda)^{-1}\cdot\Phi_0L_\lambda\bigr)\in\sL(\dX)
\]
which by assumption is not invertible. Therefore,
\begin{equation}
\|\Phi_0L_\lambda\|\ge\bigl\|(\Phi L_\lambda)^{-1}\bigr\|^{-1}.
\end{equation}
Combining this with \eqref{eq:def-nLl} gives
\begin{align*}
\|\Phi_0L_\lambda\|^{-1}&\le\bigl\|(\Phi L_\lambda)^{-1}\bigr\|\\
&=\bigl\|LL_\lambda(\Phi L_\lambda)^{-1}\bigr\|_{\sL(X)}\\
&\le\|L\|_\lambda\cdot\bigl\| L_\lambda(\Phi L_\lambda)^{-1}\bigr\|_{\sL(\dX,X)}.
\end{align*}
By \eqref{eq:limLlPLli} we infer
\[
\lim_{\lambda\to+\infty}\|L\|_\lambda^{-1}\cdot\|\Phi_0L_\lambda\|^{-1}=0
\]
which implies \eqref{eq:NC-Gen}.
\end{proof}

Note that in the previous proof we didn't  make use of the fact that $A$ is a generator but only needed the existence of the Dirichlet operators $L_\lambda$ for $\lambda$ sufficiently large.

%%% References 


\begin{thebibliography}{10}
\providecommand{\url}[1]{\texttt{#1}}
\providecommand{\urlprefix}{ }

\bibitem{ABE:13}
M.~Adler, M.~Bombieri, and K.-J. Engel.
\newblock \emph{On perturbations of generators of {$C_0$}-semigroups}.
\newblock Abstr. Appl. Anal.  (2014), Art. ID 213020.
\newblock \urlprefix\url{https://dx.doi.org/10.1155/2014/213020}.

\bibitem{ABE:15}
M.~Adler, M.~Bombieri, and K.-J. Engel.
\newblock \emph{Perturbation of analytic semigroups and applications to partial
  differential equations}.
\newblock J. Evol. Equ. \textbf{17} (2017), 1183--1208.
\newblock \urlprefix\url{https://doi.org/10.1007/s00028-016-0377-8}.

\bibitem{AE:18}
M.~Adler and K.-J. Engel.
\newblock \emph{Spectral theory for structured perturbations of linear
  operators}.
\newblock J. Spectr. Theory \textbf{8} (2018), 1393--1442.
\newblock \urlprefix\url{https://doi.org/10.4171/JST/230}.

\bibitem{Ban:16}
J.~{Banasiak}.
\newblock \emph{Explicit formulae for limit periodic flows on networks.}
\newblock {Linear Algebra Appl.} \textbf{500} (2016), 30--42.
\newblock \urlprefix\url{https://doi.org/10.1016/j.laa.2016.03.010}.

\bibitem{BFN:16}
J.~Banasiak, A.~Falkiewicz, and P.~Namayanja.
\newblock \emph{Semigroup approach to diffusion and transport problems on
  networks}.
\newblock Semigroup Forum \textbf{93} (2016), 427--443.
\newblock \urlprefix\url{https://dx.doi.org/10.1007/s00233-015-9730-4}.

\bibitem{BN:14}
J.~Banasiak and P.~Namayanja.
\newblock \emph{Asymptotic behaviour of flows on reducible networks}.
\newblock J. Networks Heterogeneous Media \textbf{9} (2014), 197--216.
\newblock \urlprefix\url{https://dx.doi.org/10.3934/nhm.2014.9.197}.

\bibitem{BP19}
J.~{Banasiak} and A.~{Puchalska}.
\newblock \emph{Transport on networks -- a playground of continuous and
  discrete mathematics in population dynamics.}
\newblock In: \emph{Mathematics applied to engineering, modelling, and social
  issues}, pp. 439--487. Cham: Springer (2019).

\bibitem{BKFR:17}
A.~B\'atkai, M.~{Kramar Fijav{\v z}}, and A.~Rhandi.
\newblock \emph{Positive operator semigroups}, \emph{Operator Theory: Advances
  and Applications}, vol. 257.
\newblock Birkh\"auser/Springer, Cham (2017).
\newblock \urlprefix\url{https://doi.org/10.1007/978-3-319-42813-0}.

\bibitem{BDK:13}
F.~Bayazit, B.~Dorn, and M.~K. Fijav{\v z}.
\newblock \emph{Asymptotic periodicity of flows in time-depending networks}.
\newblock J. Networks Heterogeneous Media \textbf{8} (2013), 843--855.
\newblock \urlprefix\url{https://dx.doi.org/10.3934/nhm.2013.8.843}.

\bibitem{BE:13}
M.~Bombieri and K.-J. Engel.
\newblock \emph{A semigroup characterization of well-posed linear control
  systems}.
\newblock Semigroup Forum \textbf{88} (2014), 366--396.
\newblock \urlprefix\url{https://doi.org/10.1007/s00233-013-9545-0}.

\bibitem{Bre:11}
H.~Brezis.
\newblock \emph{Functional {A}nalysis, {S}obolev {S}paces and {P}artial
  {D}ifferential {E}quations}.
\newblock Universitext. Springer, New York (2011).
\newblock \urlprefix\url{https://dx.doi.org/10.1007/978-0-387-70914-7}.

\bibitem{Car:11}
R.~Carlson.
\newblock \emph{Spectral theory for nonconservative transmission line
  networks}.
\newblock Netw. Heterog. Media \textbf{6} (2011), 257--277.
\newblock \urlprefix\url{https://doi.org/10.3934/nhm.2011.6.257}.

\bibitem{Dor08}
B.~Dorn.
\newblock \emph{Semigroups for flows in infinite networks}.
\newblock Semigroup Forum \textbf{76} (2008), 341--356.
\newblock \urlprefix\url{https://doi.org/10.1007/s00233-007-9036-2}.

\bibitem{DKS09}
B.~Dorn, V.~Keicher, and E.~Sikolya.
\newblock \emph{Asymptotic periodicity of recurrent flows in infinite networks}.
\newblock Math. Z. \textbf{263} (2009), 69--87.
\newblock \urlprefix\url{https://doi.org/10.1007/s00209-008-0410-x}.

\bibitem{DFKNR:10}
B.~Dorn, M.~{Kramar Fijav{\v z}}, R.~Nagel, and A.~Radl.
\newblock \emph{The semigroup approach to transport processes in networks}.
\newblock Phys. D \textbf{239} (2010), 1416--1421.
\newblock \urlprefix\url{https://doi.org/10.1016/j.physd.2009.06.012}.

\bibitem{EKNS08}
K.-J.~Engel, M.~K. Fijav{\v z}, R.~Nagel, and E.~Sikolya.
\newblock \emph{Vertex control of flows in networks}.
\newblock J. Networks Heterogeneous Media \textbf{3} (2008), 709--722.
\newblock \urlprefix\url{https://dx.doi.org/10.3934/nhm.2008.3.709}.

\bibitem{Eng:13}
K.-J. Engel.
\newblock \emph{Generator property and stability for generalized difference
  operators}.
\newblock J. Evol. Equ. \textbf{13} (2013), 311--334.
\newblock \urlprefix\url{https://dx.doi.org/10.1007/s00028-013-0179-1}.

\bibitem{EKF:19}
K.-J. {Engel} and M.~K. {Fijav{\v z}}.
\newblock \emph{Waves and diffusion on metric graphs with general vertex
  conditions.}
\newblock {Evol. Equ. Control Theory} \textbf{8} (2019), 633--661.
\newblock \urlprefix\url{https://dx.doi.org/10.3934/eect.2019030}.

\bibitem{EN:00}
K.-J. Engel and R.~Nagel.
\newblock \emph{One-{P}arameter {S}emigroups for {L}inear {E}volution
  {E}quations}, \emph{Graduate Texts in Mathematics}, vol. 194.
\newblock Springer-Verlag, New York (2000).
\newblock \urlprefix\url{https://dx.doi.org/10.1007/b97696}.

\bibitem{Exn:13}
P.~{Exner}.
\newblock \emph{Momentum operators on graphs}.
\newblock In: \emph{Spectral analysis, differential equations and mathematical
  physics: A Festschrift in honor of Fritz Gesztesy's 60th birthday}, pp.
  105--118. Providence, RI: American Mathematical Society (AMS) (2013).

\bibitem{Gre:87}
G.~Greiner.
\newblock \emph{Perturbing the boundary conditions of a generator}.
\newblock Houston J. Math. \textbf{13} (1987), 213--229.
\newblock \urlprefix\url{https://dx.doi.org/10.1007/s00233-011-9361-3}.

\bibitem{Has:91}
A.~Hastings.
\newblock \emph{McKendrick Von Foerster Models for Patch Dynamics}, pp.
  189--199.
\newblock Springer-Verlag, Berlin, Heidelberg (1991).
\newblock \urlprefix\url{https://doi.org/10.1007/978-3-642-45692-3_13}.

\bibitem{JK:19}
B.~{Jacob} and J.~T. {Kaiser}.
\newblock \emph{Well-posedness of systems of 1-D hyperbolic partial
  differential equations}.
\newblock {J. Evol. Equ.} \textbf{19} (2019), 91--109.
\newblock \urlprefix\url{https://doi.org/10.1007/s00028-018-0470-2}.

\bibitem{JMZ:15}
B.~Jacob, K.~Morris, and H.~Zwart.
\newblock \emph{{$C_0$}-semigroups for hyperbolic partial differential
  equations on a one-dimensional spatial domain}.
\newblock J. Evol. Equ. \textbf{15} (2015), 493--502.
\newblock \urlprefix\url{https://doi.org/10.1007/s00028-014-0271-1}.

\bibitem{JW:19}
B.~Jacob and S.-A. Wegner.
\newblock \emph{Well-posedness of a class of hyperbolic partial differential
  equations on the semi-axis}.
\newblock J. Evol. Equ. \textbf{19} (2019), 1111--1147.
\newblock \urlprefix\url{https://doi.org/10.1007/s00028-019-00507-7}.

\bibitem{Kat:80}
T.~Kato.
\newblock \emph{Perturbation {T}heory for {L}inear {O}perators},
  \emph{Grund\-leh\-ren Math. Wiss.}, vol. 132.
\newblock Springer-Verlag, 2nd edn. (1980).
\newblock \urlprefix\url{https://dx.doi.org/10.1007/978-3-642-66282-9}.

\bibitem{Klo:10}
B.~Kl\"{o}ss.
\newblock \emph{Difference operators as semigroup generators}.
\newblock Semigroup Forum \textbf{81} (2010), 461--482.
\newblock \urlprefix\url{https://doi.org/10.1007/s00233-010-9232-3}.

\bibitem{KS:05}
M.~Kramar and E.~Sikolya.
\newblock \emph{Spectral properties and asymptotic periodicity of flows in
  networks}.
\newblock Math. Z. \textbf{249} (2005), 139--162.
\newblock \urlprefix\url{https://dx.doi.org/10.1007/s00209-004-0695-3}.

\bibitem{KMN:21}
M.~Kramar Fijavz, D.~Mugnolo,  and  S.~Nicaise.
\newblock \emph{Linear hyperbolic systems on networks: well-posedness and qualitative properties}.
\newblock ESAIM: COCV \textbf{27} (2021), 7.
\newblock \urlprefix\url{https://doi.org/10.1051/cocv/2020091}.

\bibitem{MS07}
T.~M\'{a}trai and E.~Sikolya.
\newblock \emph{Asymptotic behavior of flows in networks}.
\newblock Forum Math. \textbf{19} (2007), 429--461.
\newblock \urlprefix\url{https://doi.org/10.1515/FORUM.2007.018}.

\bibitem{Rad08}
A.~Radl.
\newblock \emph{Transport processes in networks with scattering ramification
  nodes}.
\newblock J. Appl. Funct. Anal. \textbf{3} (2008), 461--483.

\bibitem{ZwaLeGMasVil10}
H.~{Zwart}, Y.~{Le Gorrec}, B.~{Maschke}, and J.~{Villegas}.
\newblock \emph{Well-posedness and regularity of hyperbolic boundary control
  systems on a one-dimensional spatial domain}.
\newblock {ESAIM, Control Optim. Calc. Var.} \textbf{16} (2010), 1077--1093.
\newblock \urlprefix\url{https://doi.org/10.1051/cocv/2009036}.

\end{thebibliography}
\end{document}